\newtheorem{lemma}{Lemma}
\newtheorem{proposition}{Proposition}
\newtheorem{theorem}{Theorem}
\newtheorem{remark}{Remark}
\newtheorem{definition}{Definition}
\newtheorem{example}{Example}
\newtheorem{Assumption}{Assumption}
\numberwithin{lemma}{section}
\numberwithin{proposition}{section}
\numberwithin{theorem}{section}
\numberwithin{corollary}{section}
\numberwithin{remark}{section}
\numberwithin{definition}{section}
\numberwithin{example}{section}
\numberwithin{problem}{section}
\numberwithin{application}{section}
\numberwithin{Assumption}{section}
\numberwithin{equation}{section}
\numberwithin{figure}{section}
\DeclareMathOperator*{\argmax}{arg\,max}
\newcommand{\e}{\mathbb{E}}
\def\be{\begin{align}}
\def\ee{\end{align}}
\def\b*{\begin{eqnarray*}}
\def\e*{\end{eqnarray*}}
\def\qr{{\rm q}}
\def\vr{{\rm v}}
\def\blue#1{\textcolor[RGB]{41, 128, 185}{ #1}}
\def\bru#1{\blue{#1}}
\begin{document}

\title{Optimal control under uncertainty: Application to the issue of CAT bonds}

\author{Nicolas Baradel\footnote{Ecole Polytechnique Paris, Centre de Mathématiques Appliquées, {nicolas.baradel@polytechnique.edu}.  This work
benefits from the financial support of the Chairs {\it Financial Risk} and {\it Finance and Sustainable Development}. The author takes the opportunity to express his gratitude to Bruno Bouchard for fruitful discussions.}
\ }


\maketitle

\begin{abstract}

We propose a general framework for studying optimal issue of CAT bonds in the presence of uncertainty on the parameters. In particular, the intensity of arrival of  natural disasters is inhomogeneous and may depend on unknown parameters. Given a prior on the distribution of the unknown parameters, we explain how it should evolve according to the classical Bayes rule. Taking these progressive prior-adjustments into account, we characterize the optimal policy through a quasi-variational parabolic equation, which can be solved numerically. We provide examples of application in the context of hurricanes in Florida.

\end{abstract}
\section{Introduction}

We consider an insurer or a reinsurer who holds a portfolio in non-life insurance exposed to one or several natural disasters. He can issue one or several CAT bonds\footnote{Catastrophe bonds, or CAT bonds, are tradable floating rate notes. The risk associated with a CAT bond is not linked to the default of one entity (state or corporate) but is related to the occurrence of a catastrophe.} in order to reduce the risk taken, see e.g. \cite{cummins2008cat} or \cite{cummins2012cat} for a general introduction to CAT bonds.

The first CAT bonds where issued at the end of the 1990s and the market is globally increasing, with a total risk capital outstanding greater that USD 30 billion at the end of 2017, see \cite{artemis2017} and \cite{guy2015}. CAT bonds give a strong alternative to the classical reinsurance market.

However, issuing a CAT bond leads to the choice of several parameters, as the layer e.g. and the date of issuance. The coupon is not a priori perfectly known as well as the claim distribution. Moreover, the global warming will lead to an increase of several natural disasters which is a source of uncertainty on the distribution of future claims. For example, in \cite{oouchi2006tropical}, the authors estimate that if the temperature rises of 2.5 degrees in the next decades, the frequency of Hurricanes in North Atlantic will rise by 30\%.

The aim of this paper is to provide a rigorous continuous-time framework in which we can establish the optimal behavior policy in issuing CAT bonds, taking into account the uncertainty described above as the risk evolution.

The coupon of the CAT bond is generally not known in advance, even its distribution is not always clearly fixed. We therefore need to model it as a random variable whose  distribution  depends on unknown parameters. It is the same for the distribution of the natural disasters.

The particular case of acting on a system with partially unknown response distributions has been studied in \cite{baradel2016optimal} in a Brownian framework, and widely for the case of discrete settings; see, e.g., \cite{easley1988controlling, hernandez2012adaptive} for references. They fix a prior distribution on the unknown parameter and introduce a stochastic process on the space of measures which leads to a dynamic programming principle and a PDE characterization of the value function (in the viscosity solution sense). We will adapt \cite{baradel2016optimal} in a new context, dealing with CAT bonds.

In this paper, the natural disasters will be represented by a random Poisson measure\footnote{The activity of the random Poisson measure will be finite, by construction} and  two parameters are unknown: the distribution of the severity of the natural disasters and the intensity of their arrivals.
As in  \cite{baradel2016optimal}, we allow the agent to issue new CAT bonds at any time, the actions are discrete but chosen in a continuous time framework.

To the best of our knowledge, the study of such a general problem with an application to the CAT bonds seems to be new in the literature, even in the case where all parameters are known. From a mathematical point of view, the main difficulty comes from the fact that the conditional distribution on the unknown parameters evolves continuously and jumps at the occurrence times of a catastrophic event. In \cite{baradel2016optimal}, it was only evolving when an action was taken on the system. For tractability,  we assume that the associated process remains in a finite-dimensional space which can be linked smoothly to a subset of $\mathbb{R}^{d}$ for some $d \geq 1$. Moreover, in \cite{baradel2016optimal}, only one action could be running at the same time. In this paper, we will deal with CAT bonds that are active simultaneously. This leads to specific boundary conditions on the characterization of the value function.

Although the  model presented below has been designed for the particular case of CAT bonds, it is quite general from a mathematical view-point and can be applied to all cases where the agent faces a random Poisson measure and can issue contracts from which he pays a premium and receives a specific payoff depending on some event.

\smallskip

The paper is organized as follows. Section 2 presents the framework. It introduces all concepts in order to describe the controlled problem. Section 3 gives the characterization of the associated value function as a PDE in the viscosity sense. Section 4 shows the viscosity properties, following and adapting the arguments of \cite{baradel2016optimal} in our context. Section 5 provides a sufficient condition for a comparison principle for the PDE satisfied by the value function. Section 6 gives a numerical scheme in order to solve numerically the controlled problem in practice. Finally, section 7 provides a case study of issuing CAT bonds in a optimal way, in the context of Hurricanes in Florida.

\section{The framework}

\subsection{General framework}

\bigskip

All over this paper, $D([0, T], \mathbb{R}^{d})$ is the Skorohod space of c\`{a}dl\`{a}g\footnote{Continue \`{a} droite, limite \`{a} gauche (right continuous with left limits)} functions from $[0, T]$ into $\mathbb{R}^{d}$, $\mathbb{P}$ is a probability measure on this space, and $T > 0$ is a fixed time horizon.

We consider three Polish spaces: $(U^{\lambda}, \mathcal{B}(U^{\lambda}))$ , $(U^{\gamma}, \mathcal{B}(U^{\gamma}))$ and $(U^{\upsilon}, \mathcal{B}(U^{\upsilon}))$ that will support three unknown parameters, respectively $\lambda_{0}$, $\gamma_{0}$ and $\upsilon_{0}$. Here $\mathcal{B}(.)$ denotes the Borel $\sigma$-algebra. We set $U := (U^{\lambda}, U^{\gamma}, U^{\upsilon}).$

Let $N(dt, du)$ be a random Poisson measure with compensator $\nu(dt,du)$ such that $\nu$ is finite on $(\mathbb{R}^{d*}, \mathcal{B}(\mathbb{R}^{d*}))$ where $\mathbb{R}^{d*} :=  \mathbb{R}^{d} \setminus \{0_{\mathbb{R}^{d}}\}$. The intensity of the random Poisson measure is supposed to be inhomogeneous of the form $s \mapsto \Lambda(s, \lambda_{0})$ where $\lambda_{0}$ is a random variable valued in $U^{\lambda}$. The jump distribution is assumed to be $\Upsilon(\gamma_{0}, \cdot)$ where $\gamma_{0}$ is a random variable valued in $U^{\gamma}$. The parameter $\upsilon$ is a random variable valued in $U^{\upsilon}$. We denote by $\mathbf{M}^{\lambda}$ a subset of the set of Borel probability measures on $U^{\lambda}$ and by $\mathbf{M}^{\gamma} \otimes \mathbf{M}^{\upsilon} =: \mathbf{M}$ the product of two locally compact subsets of the set of Borel probability measures, respectively on $U^{\gamma}$ and $U^{\upsilon}$, endowed with the weak topology.
 
We also allow an additional randomness when acting on the system and consider another Polish space $(E, \mathcal{B}(E))$ on which is defined a family $(\epsilon_{i})_{i \geq 1}$ of i.i.d. random variables with common probability measure $\mathbb{P}_{\epsilon}$ on $\mathcal{B}(E)$.

On the product space $\Omega := D([0, T], \mathbb{R}^{d}) \times U \times E^{\mathbb{N}^{*}}$, we consider the family of measures $\{\mathbb{P} \times \overline{m} \times \mathbb{P}_{\epsilon}^{\otimes \mathbb{N}^{*}}, \overline{m} \in \overline{\mathbf{M}} \}$ where $\overline{\mathbf{M}} := \mathbf{M}^{\lambda}  \otimes \mathbf{M}$. An element $\overline{m} \in \overline{\mathbf{M}}$ is a probability distribution on $(\lambda_0, \gamma_0, \upsilon_0)$. We denote by $\mathbb{P}_{\overline{m}}$ an element of this family whenever $\overline{m} \in \overline{\mathbf{M}}$ is fixed. The operator $\mathbb{E}_{\overline{m}}$ is the expectation associated with $\mathbb{P}_{\overline{m}}$. Note that $N(dt, du)$ and  $(\epsilon_{i})_{i \geq 1}$ are independent under each $\mathbb{P}_{\overline{m}}$. For $\overline{m} \in \overline{\mathbf{M}}$ given, we let $\mathbb{F}^{\overline{m}} := (\mathcal{F}_{t}^{\overline{m}})_{t \geq 0}$ denote the $\mathbb{P}_{\overline{m}}$-augmentation of the filtration $\mathbb{F} := (\mathcal{F}_{t})_{t \geq 0}$ defined by $\mathcal{F}_{t} := \sigma(N([0, s] \times \cdot)_{s \leq t}, \lambda_{0}, \gamma_{0}, \upsilon_{0}, (\epsilon_{i})_{i \geq 1})$. Hereafter, all random variables are considered with respect to the probability space $(\Omega, \mathcal{F}_{T}^{\overline{m}}, \mathbb{P}_{\overline{m}})$ with $\overline{m} \in \overline{\mathbf{M}}$ given by the context.

\subsection{CAT bond framework}

In this framework, $d \in \mathbb{N}^{*}$ is the number of perils. The insurer has some exposure related to these perils and may issue CAT bonds to reduce the risk taken. The random Poisson measure represents the arrival of claims. The intensity of arrival is $s \mapsto \Lambda(s, \lambda_{0})$ in which $\lambda_{0}$, valued in $U^{\lambda}$, may be unknown to the insurer. The dependence in time may represent the seasonality or a structural change, for example caused by the global warming.

The measure $m^{\lambda} \in \mathbf{M}^{\lambda}$ is the initial knowledge of the insurer on $\lambda_{0}$ and will evolve through the observations of $N$, whose jumps model the arrival of natural disasters. The severity distribution of the claims may also be unknown, it depends on the unknown parameter $\gamma_{0}$, valued in $U^{\gamma}$. An initial prior is given as an element $m^{\gamma} \in \mathbf{M}^{\gamma}$.  Acting on the system consists in issuing a CAT bond, which means transferring a part of the risk to the market. The equilibrium premium that the insurer will pay is random (since it comes from the law of supply and demand and is not known when the decision to issue is taken), and the distribution may not be perfectly known. We assume that it depends on the unknown parameter $\upsilon_{0}$, valued in $U^{\upsilon}$. Its prior distribution is represented by some $m^{\upsilon} \in \mathbf{M}^{\upsilon}$.

\subsection{The control}

Let $\mathbf{A} \subset \mathbb{R}^{d + 1}$ be a non-empty compact set. Let $\ell  \in \mathbb{R}^{*}_{+}$ be the time-length of each action on the controlled system. Given $\overline{m} \in \overline{\mathbf{M}}$, we denote by $\Phi^{\circ, \overline{m}}$ the collection of random variables $\phi = (\tau_{i}^{\phi}, \alpha_{i}^{\phi})_{i \geq 1}$ on $(\Omega, \mathcal{F}_{T}^{\overline{m}})$ with values in $\mathbb{R}_{+} \times \mathbf{A}$ such that $(\tau_{i}^{\phi})_{i \geq 1}$ is a non-decreasing sequence of $\mathbb{F}^{\overline{m}}$-stopping times and each $\alpha_{i}^{\phi}$ is $\mathcal{F}_{\tau_{i}}^{\overline{m}}$-measurable for $i \geq 1$. We shall write $\alpha_{i}^{\phi} := (k_{i}^{\phi}, n_{i}^{\phi}) \in \mathbf{A}$ where $k_{i}^{\phi}$ and $n_{i}^{\phi}$ are $ \mathbb{R}^{d}$ and $\mathbb{R}$-valued. To each $k_{i}^{\phi}$, we associate a non-empty closed set $A_{k_{i}^{\phi}} \subset \mathbb{R}^{d*}$ through a one-to-one map.

The $\tau_{i}^{\phi}$'s will be the times at which the $i$-th CAT bond is issued. The fixed value $\ell$ is the time-length (or maturity) of all CAT bonds. In $\alpha_{i}^{\phi} := (k_{i}^{\phi}, n_{i}^{\phi}) \in \mathbf{A}$, $n^{\phi}_{i}$ is related to the notional and $A_{k_{i}^{\phi}}$ is the layer chosen for one peril and one region: it is the characteristics of the CAT bonds associated with the risk covered. If a natural disaster occurs and its severity is in the layer $A_{k_{i}^{\phi}}$, i.e. the random Poisson measure has a jump in $A_{k_{i}^{\phi}}$, then the associated CAT bond ends and the reinsurer gains a payoff proportional to the notional $n_{i}^{\phi}$.

\begin{remark}
    Since there are $d \geq 1$ perils, the layer of the CAT bonds is defined in $\mathbb{R}^{d}$. In practice, one CAT bond covers one peril. For example, in order to cover the first peril, the sets $A_{k_{i}}$ will be of the form $A \times \mathbb{R}^{d-1}$ for some $A \subset \mathbb{R}_{+}^{*}$, where $A$ is the covered layer in relation to the natural disaster of the first peril. The case $d > 1$ gives the opportunity to manage the risk associated to several perils at the same time, with different CAT bonds on different perils, whereas $d = 1$ is the simple case in which we focus on a single risk.
\end{remark}

We denote by $\vartheta_{i}^{\phi}$ the end of the $i$-th CAT bond defined by:
		\begin{equation}\label{vartheta}
			\vartheta_{i}^{\phi} := \inf\{t > \tau_{i}^{\phi}, N(\{t\} \times A_{k_{i}}^{\phi}) = 1\} \wedge (\tau_{i}^{\phi} + \ell).
		\end{equation}

\begin{remark}
	According to the definition of $(\vartheta_{i}^{\phi})_{i \geq 1}$, it can happen that $\vartheta_{i_{1}}^{\phi} = \vartheta_{i_{2}}^{\phi}$ for $i_{1} \not= i_{2}$. Moreover,
		\[
			\tau_{i}^{\phi} < \vartheta_{i}^{\phi} \leq \tau_{i}^{\phi} + \ell.
		\]
\end{remark}

\noindent For $\kappa \in \mathbb{N}^{*}$, we say that $\phi \in \Phi^{\circ, \overline{m}}$ belongs to $\Phi^{\circ, \overline{m}}_{\kappa}$ if the condition
		\begin{equation}\label{N_cond}
			  \ \sum_{i \geq 1} \mathbf{1}_{\{\tau_{i}^{\phi} < t \leq \vartheta_{i}^{\phi}\}} \leq \kappa,\;\;\;\;\forall t \leq T
		\end{equation}
holds. This means that the insurer can hold a maximum of $\kappa$ running CAT bonds simultaneously. 

\subsection{The CAT bonds process}

We need to keep track of how many CAT bonds are running, and which parameters are associated with, in order to get a Markovian framework. A CAT bond will has its characteristics determined at $\tau_{i}$, for $\phi \in \Phi^{\circ, \overline{m}}_{\kappa}$. Moreover, a CAT bond will end from a jump or after the time-length $\ell$. We need to define a process which will keep track the characteristics and the time-length elapsed. We introduce the sets $\mathbf{C} := \left((\mathbb{R}^{d} \times \mathbb{R} \times \mathbf{A}) \cup \partial\right)^{\kappa}$, $\mathbf{L} := ([0, \ell[\cup\partial)^{\kappa}$, in which
	\begin{itemize}
		\item An element $(x, r, a)$ of the set $\mathbb{R}^{d} \times \mathbb{R} \times \mathbf{A}$ represents the initial parameters (characteristics) of the CAT bond, the first component will be the state process defined in the next subsection, the second one the coupon rate, and the last one the notional/layer chosen;
		\item An element of the set $[0, \ell [$ represents the time-length elapsed of a running CAT bond;
		\item The point $\partial$ represents the absence of CAT bond, it is a cemetery point. 
	\end{itemize}
  The set of CAT bonds is
    \[
    	\mathbf{CL} := \left\{(c, l) \in \mathbf{C} \times \mathbf{L} \ \ | \ \   c_{j} = \partial \Longleftrightarrow l_{j} = \partial,\; \forall \ 1 \leq j \leq \kappa\right\}
    \]
and we denote by $\overline{\mathbf{CL}}$ its closure. A component of $(c, l) \in \mathbf{CL}$ corresponds to a CAT bond, where $c$ are its characteristics and $l$ is the time-length elapsed since its issue. The special case $(c, l) = (\partial, \partial)$ describes the case where there is no CAT bond for this component, and there are $\kappa$ component: the maximum possible of running CAT bonds simultaneously.

\smallskip

We set $\mathbf{K} := \{0, \ldots, \kappa\}$ and we define by $\mathcal{P}(\mathbf{K})$ the set of subsets of $\mathbf{K}$. We can now define the sets $\mathbf{C}\mathbf{L}_{\mathbf{J}}$ with $\mathbf{J} \in \mathcal{P}(\mathbf{K})$:
	\[
        	\mathbf{C}\mathbf{L}_{\mathbf{J}} := \{(c, l) \in \mathbf{CL}  \ \ | \ \   j \in \mathbf{J} \Longleftrightarrow c_{j} \not= \partial,\; \forall \ 1 \leq j \leq \kappa \}  \\
    \]
which represent the sets of CAT bonds in which there are CAT bonds running exactly in the indexes of $\mathbf{J}$.
    
Moreover, for $(c, l) \in \mathbf{C}\mathbf{L} \backslash \mathbf{C}\mathbf{L}_{\mathbf{K}}$, we introduce:
	\[
		\Pi^{0}(c, l) := \min\{1 \leq j \leq \kappa : c^{j} = \partial\},
	\]
which is the first index with no CAT bond.

For $z := (t, x, c, l) \in \mathbf{Z} := [0, T] \times \mathbb{R}^{d} \times \mathbf{CL}$ and a control $\phi \in \Phi^{\circ, \overline{m}}_{\kappa}$, we now define the process $((C, L)_{s}^{z, \phi, j})_{t \leq s \leq T}^{1 \leq j \leq \kappa}$ valued in $\mathbf{C}\mathbf{L}$ and denoted hereafter $(C, L)$ for ease of notation. In $z$, the variable $t$ is the time and $x$ is the state of the output process, defined in next section. The process $(C, L)$ will jump at the $\tau_{i}'s$ (new CAT bond) and at the $\vartheta_{i}$'s (end of one or several CAT bonds). $C$ will be a pure jump process whereas the indexes of $L$ will evolve continuously over time, recall that it represents the elapsed time-length of the CAT bonds.

We now define the functions associated with the jumps of $(C, L)$. The first one, denoted by $\mathfrak{C}_{+}$, represents the arrival of one new CAT bond with parameters $(x, r, a) \in \mathbb{R}^{d} \times \mathbb{R} \times \mathbf{A}$ and is defined by
		\[
			\begin{aligned}
			\mathfrak{C}_{+} : (\mathbf{C}\mathbf{L} \backslash \mathbf{C}\mathbf{L}_{\mathbf{K}}) \times \mathbb{R}^{d} \times  \mathbb{R} \times \mathbf{A} &\rightarrow \mathbf{C} \mathbf{L} \\
						(c, l ; x, r, a) &\mapsto \mathfrak{C}_{+}(c, l ; x, r, a)
			\end{aligned}
		\]
such that,
	\begin{equation}\label{newCB}
		\begin{aligned}
			&\mathfrak{C}_{+}(c, l ; x, r, a)_{\Pi^{0}(c, l)} := ((x, r, a), 0), & \\
			&\mathfrak{C}_{+}(c, l ; x, r, a)_{j} =  (c, l)_{j} &j \not= \Pi^{0}(c, l).
		\end{aligned}
	\end{equation}

Above, when we issue a new CAT bond, we add it, with its characteristics, to the state process $(C, L)$, which is done by $\mathfrak{C}_{+}$.

The second function, denoted by $\mathfrak{C}_{-}$, represents the end of the CAT bonds by an event associated with the random Poisson measure, of severity $u \in \mathbb{R}^{d*}$, and is defined by
		\[
			\begin{aligned}
			\mathfrak{C}_{-} : \mathbf{C} \mathbf{L} \times \mathbb{R}^{d*} &\rightarrow \mathbf{CL} \\
						(c, l ; u) &\mapsto \mathfrak{C}_{-}(c, l ;u).
			\end{aligned}
		\]

Nonetheless, several CAT bonds may end with a single event. We define the set of indexes in $c \in \mathbf{C}$ which end after the natural disaster $u \in \mathbb{R}^{d*}$, by
		\begin{equation}\label{J}
			\mathcal{J}(c ; u) := \left\{j \in \{1, \ldots, \kappa\} \mid c_{j} \not= \partial, u \in A_{k_{j}} \right\}.
		\end{equation}
Using this set, $\mathfrak{C}_{-}(c, l ;u)$ is defined simply through its $j$-component
		\begin{equation}\label{Cj}
				\mathfrak{C}_{-}(c, l ;u)_{j} := \left\{
    \begin{array}{ll}
        \partial \times \partial & \text{ if }  j \in \mathcal{J}(c ; u)\\
        (c, l)_{j} & \text{ if } j \not\in \mathcal{J}(c ; u)
    \end{array},
    ~~~~1 \leq j \leq \kappa
\right..
		\end{equation}

Above, when a CAT bonds ends following a natural disaster, we clear it, with its characteristics, to the state process $(C, L)$, which is done by $\mathfrak{C}_{-}$.

It remains to consider the case where a CAT bond ends because $l_{j} = \ell$ for some $1 \leq j \leq \kappa$. We define:
		\[
			\begin{aligned}
			\mathfrak{C}_{-}^{\ell} : (\overline{\mathbf{C} \mathbf{L}} \backslash \overline{\mathbf{C}\mathbf{L}}_{\emptyset}) &\rightarrow \mathbf{C} \mathbf{L} \\
						(c, l) &\mapsto \mathfrak{C}_{-}^{\ell}(c, l),
			\end{aligned}
		\]
where, for all $1 \leq j \leq \kappa$,
	\[
    	\mathfrak{C}_{-}^{\ell}(c, l)_{j} = (\partial \times \partial)\mathbf{1}_{\{l_{j} = \ell\}} + (c, l)_{j}\mathbf{1}_{\{l_{j} \not= \ell\}}.
    \]

We are now in position to define the processes $((C, L)_{s}^{z, \phi})_{t \leq s \leq T}$ for $\phi \in \Phi^{\circ, \overline{m}}_{\kappa}$. The process evolves at $\tau_{i}^{\phi}$ and $\vartheta_{i}^{\phi}$, for $i \geq 1$, according to:
	\begin{equation}\label{sautsC}
		\begin{aligned}
			&(C, L)_{\tau_{i}^{\phi}+}^{z, \phi}:= \mathfrak{C}_{+}((C, L)_{\tau_{i}^{\phi}}^{z, \phi}) ; X_{\tau_{i}^{\phi}}^{z, \phi}, r_{i}^{\phi}, \alpha_{i}^{\phi}) ; \\
			&(C, L)_{\vartheta_{i}^{\phi}}^{z, \phi} \ := \mathbf{1}_{\{\vartheta_{i}^{\phi} < \tau_{i}^{\phi} + \ell\}}\mathfrak{C}_{-}((C, L)_{\vartheta_{i}^{\phi}-}^{z, \phi}, u_{i}) + \mathbf{1}_{\{\vartheta_{i}^{\phi} = \tau_{i}^{\phi} + \ell\}}\mathfrak{C}_{-}^{\ell}((C, L)_{\vartheta_{i}^{\phi}-}^{z, \phi}),
		\end{aligned}
	\end{equation}
in which
\begin{itemize}
    \item $X_{\tau_{i}^{\phi}}^{z, \phi}$ is the output process, defined in the next section,
    \item $r_{i}^{\phi} := \mathfrak{C}_{0}(\tau_{i}^{\phi}, X_{\tau_{i}^{\phi}-}, \alpha_{i}^{\phi}, \upsilon, \epsilon_{i})$ with $\mathfrak{C}_{0} : [0,T] \times \mathbb{R}^{d} \times \mathbf{A} \times U^{\upsilon} \times E \rightarrow \mathbb{R}$ a measurable function is the coupon size. It has a common noise with $\upsilon$ and a specific noise $\epsilon_i$.
\end{itemize}

\noindent Elsewhere, $C^{z, \phi}$ is constant. For $ 1 \leq j \leq \kappa$, $L^{z, \phi, j}$ evolves according to:
			\[
				dL^{z, \phi, j}_{t} = \mathbf{1}_{\{L^{z, \phi, j}_{t} \not= \partial\}}dt.
			\]
This closes the definition of the process $(C, L)$. Note that we separated both the initial parameters $C$ with the elapsed time-length $L$ since the second one will play a different role in the PDE characterization in consequence of its continuous part.

\begin{remark}
If $c \mapsto \Pi(c) := \#\{j \in \mathbf{K} : c^{j} \not= \partial\}$, the process $C^{z, \phi}$ (and also, by construction, $L^{z, \phi})$ satisfies :
	\[
		\begin{aligned}
		& \Pi(C_{s}^{z, \phi}) \leq \kappa,  \ \forall s \in [t, T],~\mathbb{P}_{\overline{m}}- a.s.
        \end{aligned}
	\]
\end{remark}

We also give a metric on $\overline{\mathbf{CL}}$.

\begin{definition}We associate to $\overline{\mathbf{CL}}$ the metric $\mathfrak{d}$ defined by
	\[
    	\begin{aligned}
    	\mathfrak{d}\left[(c, l), (c', l')\right] := &\sum_{j \in \mathbf{J}\cap\mathbf{J}'}\left[\|c_{j} - c_{j}'\|^{2} + (l_{j} - l'_{j})^{2}\right] + \sum_{j \in \mathbf{J} \backslash\mathbf{J}'}(\|c_{j}\|^{2} + l_{j}^{2}) \\
        &+ \sum_{j \in \mathbf{J}' \backslash\mathbf{J}}(\|c_{j}'\|^{2} + (l_{j}')^{2}) + Card(\mathbf{J} \Delta \mathbf{J}'),
        \end{aligned}
    \]
where $\mathbf{J}$ and $\mathbf{J}'$ are respectively the set of running CAT bonds of parameters $(c, l)$ and $(c', l')$.
\end{definition}

\subsection{The output process}

We are now in position to describe the controlled state process. Given some initial data $(t, x) \in [0, T] \times \mathbb{R}^{d}$, and $\phi \in \Phi^{\circ, \overline{m}}_{\kappa}$, we let $X^{t, x, \phi}$ be a strong solution on $[t, T]$ of
		\begin{equation}\label{X}
			\begin{aligned}
			X := x &+ \int_{t}^{\cdot}\left[\mu(s, X_{s}) + \overline{C}(s, C_{s})\right]ds\\
   &+  \int_{t}^{\cdot}\int_{\mathbb{R}^{d}}\left[\beta(s, X_{s-}, u) + \mathfrak{F}(s, X_{s-}, C_{s-}, L_{s-} ; u)\right]N(ds, du) \\
				&+ \sum_{i \geq 1} \mathbf{1}_{\{t \leq \tau_{i}^{\phi} < \cdot\}} H(\tau_{i}^{\phi}, X_{\tau_{i}^{\phi}}, \alpha_{i}^{\phi}),
			\end{aligned}
		\end{equation}
in which \begin{itemize}
    \item $\overline{C}$ is a function which gives the total coupon associated to the CAT bonds,
    \item $\mathfrak{F}$ is the \textit{payoff} of the running CAT bonds after a natural disaster, defined by
		\[
			\mathfrak{F}(t, x, c, l ; u) := \sum_{j \in \mathcal{J}(c ; u)}F(t, x, c^{j}, l^{j}; u),
		\]
  where $\mathcal{J}(c ; u)$ was defined in (\ref{J}) and $F(t, x, c^{j}, l^{j}; u)$ is the \textit{payoff} for the end of the CAT bond $c^{j}, l^{j}$ according to the jump $u$,
  \item $H$ is a function that gives the initial cost of issuing a CAT bond.
\end{itemize}

\noindent To guarantee existence and uniqueness of the above, we make the following standard assumptions.
\begin{Assumption} $\mu : [0, T] \times \mathbb{R}^{d} \mapsto \mathbb{R}^{d}$, $\beta : [0, T] \times \mathbb{R}^{d} \times \mathbb{R}^{d} \mapsto \mathbb{M}^{d}$ and $\overline{C} : [0, T] \times \mathbf{C} \mapsto \mathbb{R}^d$, are assumed to be measurable, continuous, and Lipschitz with linear growth in their second argument, uniformly in the other ones.
The maps $H : [0, T] \times \mathbb{R}^{d} \times \mathbf{A}$, and $F : [0, T] \times \mathbb{R}^{d} \times \mathbf{CL} \times \mathbb{R}^{d} $ are assumed to be measurable. Moreover, $H$ and $F$ have linear growth in their second component.
\end{Assumption}

In practice, the CAT bonds are usually issued by the special purpose vehicle (SPV). To simplify, we act as if the reinsurer and the SPV are a single entity, since it will not play an essential role for our purpose.

\medskip

This dynamics means the following. Without any CAT bond, the process $X$ follows a pure jump process with a drift described by $\mu$ and $\beta$ in (\ref{X}). In pratice, a component will be the cash. The function $\overline{C}$ is the instantaneous cash flow generated by the running CAT bonds whearas the function $\mathfrak{F}$ represents the \textit{payoff} of the CAT bonds that ends at the jump $u$. The third line refers to a jump of the whole process when a CAT bond is issued, for example, for a fixed cost.
    
\bigskip

	For $z := (t, x, c, l) \in \mathbf{Z}$, we shall write $X^{z, \phi}$ for the process $X$ starting with the CAT bonds $(c, l)$.  We denote by $\mathbb{F}^{z,\overline{m},\phi} := \left(\mathcal{F}_{s}^{z, \overline{m}, \phi}\right)_{s\ge 0}$ the $\mathbb{P}_{\overline{m}}$-augmentation of the filtration generated by $\left(X^{t, x, \phi}, \sum_{i \geq 1}r_{i}^{\phi}\mathbf{1}_{[\tau_{i}^{\phi}, +\infty[}, N([t, s] \times \cdot )_{s \geq t}\right)$. It corresponds to the observation of the reinsurer on which the admissible controls will be built.

\bigskip

For $\kappa \in \mathbb{N}^{*}$, we say that $\phi \in \Phi^{\circ, \overline{m}}_{\kappa}$ belongs to $\Phi^{z, \overline{m}}_{\kappa}$ if it is $\mathbb{F}^{z,\overline{m},\phi}$ adapted. The set $\Phi^{z, \overline{m}}_{\kappa}$ is the set of admissible controls. Recall that it satisfies the constraint (\ref{N_cond}) which refers to the fact that the controller cannot have more than $\kappa$ simultaneous running CAT bonds at each time.

\begin{remark}
    In our impulse framework with jumps, there remains a case to make explicit: how does the control work after the observation of a natural disaster? After such an event, the control may immediately issue a new CAT bond. At a jump date $\zeta$, the controller observes a size jump of $U$. On the same date, the control may issue a CAT bond, namely $\tau_{i} = \zeta$ for some $i\geq 1$. In this case, $X_{\tau_{i}}$ is $X_{\tau_{i}-}$ to which we add, firstly, the jump from the random Poisson measure (which may end some CAT bonds) and, secondly, the control action involves deciding whether or not to issue a new CAT bond based on the observation of the natural disaster and its consequences. The process $X$ is càdlàg.
\end{remark}

\subsection{Bayesian updates}

Obviously, the prior $\overline{m} \in \overline{\mathbf{M}}$ will evolve over time. Recall that $\overline{\mathbf{M}} := \mathbf{M}^{\lambda} \otimes \mathbf{M}$ and denote by $\overline{m} := (m^{\lambda}, m^{\gamma}, m^{\upsilon})$ the corresponding element. The observation of $X$ over time will lead to a continuous update of $m^{\lambda}$, whereas $m^{\gamma}$ will be updated by observing the size of a jump from $N$ and the measure $m^{\upsilon}$ will be updated by acting on the system at times $\tau_{i}^{\phi}$. This leads to the definition of the process $M := (M^{\lambda}, M^{\gamma}, M^{\upsilon})$ valued in $\overline{\mathbf{M}}$. We first focus on $m^{\lambda}$.

\subsubsection{Evolution of the intensity}\label{intensity}

We start with the assumption associated with the unknown and inhomogeneous intensity of the random Poisson measure.

\begin{Assumption}\label{h_lambda}
    For all $m^{\lambda} \in \mathbf{M}^{\lambda}$,
    
    \[
    t \mapsto \Lambda(t, \lambda_{0})
    \]
    is a c\`{a}dl\`{a}g process \ $m^{\lambda}-a.s.$
\end{Assumption}

Given $B \in \mathcal{B}(U^{\lambda})$, we set $M_{s}^{t,m^{\lambda}}(B) := \mathbb{E}_{\overline{m}}\left(\mathbf{1}_{\left\{\lambda_{0} \in B\right\}} | \mathcal{F}_{s}^{z,\overline{m}, \phi}\right)$ for $z=(t,x, c, l)$ and $\phi \in \Phi^{z,\overline{m}}_{\kappa}$.

From now on, we denote by $(\zeta_{i})_{i\geq1}$ the jump times associated with the random Poisson measure. We shall prove the following proposition.

\begin{proposition}\label{bayes_poisson}
    Under Assumption \ref{h_lambda}, the process $M_{s}^{t,m^{\lambda}}$ is
    \[
    M_{s}^{t,m^{\lambda}}(d\lambda) = \frac{\left[\prod_{t < \zeta_i \leq s}\Lambda(\zeta_{i}, \lambda)\right]e^{-\int_{t}^{s}\Lambda(u, \lambda)du}m^{\lambda}(d\lambda)}{\int_{U^{\lambda}}\left[\prod_{t < \zeta_i \leq s}\Lambda(\zeta_{i}, \lambda)\right]e^{-\int_{t}^{s}\Lambda(u, \lambda)du}m^{\lambda}(d\lambda)}.
    \]
\end{proposition}

\noindent The rest of this subsection is dedicated to the proof of the above proposition. To this aim, we first describe how $M$ evolves between two jumps and after at a jump. We will need the following technical remarks.

\begin{remark}\label{localement_Lambda}
Under Assumption \ref{h_lambda}, $m^{\lambda}-a.s.$ $u \mapsto \Lambda(u, \lambda_{0})$ is bounded on $[0, T]$ (\cite[Lemma 1 p122]{billingsley2013convergence}), and 
\[
\int_{s}^{t}\Lambda(u, \lambda_{0})du < +\infty
\]
$m^{\lambda}-a.s.$, for all $0 \leq s \leq t$.
\end{remark}

\begin{remark}\label{continuite_Lambda}
Under Assumption \ref{h_lambda}, for all $\epsilon > 0$ and $s\geq 0$, there exists $h_0$ such that, for all $0 \leq h \leq h_0$,
    \[
        \int_{s}^{s+h}\Lambda(u, \lambda_{0})du \leq h\left[\Lambda(s, \lambda_0) + \epsilon\right].
    \]
\end{remark}
\begin{remark}\label{integrale_Lambda}
	Since a c\`{a}dl\`{a}g function has at most a countable set of points of discontinuity, under Assumption \ref{h_lambda} we have
    	\begin{equation}\label{eq_Lambda_primitive}
			\int_{s}^{t}\Lambda(u, \lambda_{0})e^{-\int_{\alpha}^{u}\Lambda(v, \lambda_{0})dv}du =  e^{-\int_{\alpha}^{s}\Lambda(v, \lambda_{0})dv}-e^{-\int_{\alpha}^{t}\Lambda(v, \lambda_{0})dv} \;\;\mbox{$m^{\lambda}$-a.e.}
		\end{equation}
		for  almost all $0 \leq \alpha \leq s \leq t$.
\end{remark}

We now describe the evolution between two jumps.

\begin{lemma}\label{bayes_poisson_pas_saut}
For all  $z=(t,x, c, l)\in \mathbf{Z}$ and $s > t$, 
	\begin{equation*}
		M_{s}^{t,m^{\lambda}}(B)\mathbf{1}_{\left\{\zeta_{i} \leq s < \zeta_{i+1}\right\}} = \mathfrak{M}_{\lambda}(B ; \zeta_{i}, s)\mathbf{1}_{\left\{\zeta_{i} \leq s < \zeta_{i+1}\right\}} 
	\end{equation*}	
	where 
	$$
	\mathfrak{M}_{\lambda}(B ; \zeta_{i}, s)  := \frac{\int_{B}e^{-\int_{\zeta_{i}}^{s}\Lambda(u, \lambda)du}M_{\zeta_{i}}^{t,m^{\lambda}}(d\lambda)}{\int_{U^{\lambda}}e^{-\int_{\zeta_{i}}^{s}\Lambda(u, \lambda)du}M_{\zeta_{i}}^{t,m^{\lambda}}(d\lambda)}\mathbf{1}_{\left\{\zeta_{i} \leq s \right\}}  .
	$$
	
\end{lemma}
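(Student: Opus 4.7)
The plan is to view $M^{t,m^\lambda}_s$ as a regular conditional distribution of $\lambda_0$ and to reduce the claim to the standard Bayes formula for an inhomogeneous Poisson process with unknown rate. As a first preliminary I would argue that, by independence of $\lambda_0$ from $(\gamma_0,\upsilon_0,(\epsilon_i)_{i\geq 1})$ under $\mathbb{P}_{\overline{m}}$ and by the fact that $\mu,\beta,H,\overline{C},F$ do not involve $\lambda_0$, the conditional distribution of $\lambda_0$ given $\mathcal{F}^{z,\overline{m},\phi}_s$ coincides with its conditional distribution given the sub-$\sigma$-algebra $\mathcal{G}_s := \sigma(N([t,u]\times B) : u \leq s,\ B \in \mathcal{B}(\mathbb{R}^{d*}))$. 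This simultaneously establishes the independence of $M^{t,m^\lambda}_s$ on $x$ and on $\phi$ announced in the text preceding the lemma.

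I would then fix $i$ and work on $\{\zeta_i \leq s < \zeta_{i+1}\}$. The tower property allows to condition first on $\mathcal{G}_{\zeta_i}$, on which $M^{t,m^\lambda}_{\zeta_i}$ is, by definition, a regular version of the law of $\lambda_0$. Conditionally on $\{\lambda_0 = \lambda\} \cap \mathcal{G}_{\zeta_i}$, the number of atoms of $N$ in $(\zeta_i,s]\times\mathbb{R}^{d*}$ is Poisson with finite mean $\int_{\zeta_i}^{s}\Lambda(u,\lambda)du$ by Assumption \ref{h_lambda}(i); consequently the conditional probability that no jump of $N$ occurs on $(\zeta_i,s]$ equals $\exp\bigl(-\int_{\zeta_i}^{s}\Lambda(u,\lambda)du\bigr)$.

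Testing against an arbitrary bounded $\mathcal{G}_{\zeta_i}$-measurable random variable $Y$, this yields, for every $B \in \mathcal{B}(U^\lambda)$,
\begin{equation*}
\mathbb{E}_{\overline{m}}\!\left[Y\,\mathbf{1}_{\{\lambda_0 \in B\}}\mathbf{1}_{\{\zeta_i \leq s < \zeta_{i+1}\}}\right] = \mathbb{E}_{\overline{m}}\!\left[Y\,\mathbf{1}_{\{\zeta_i \leq s\}}\int_B e^{-\int_{\zeta_i}^{s}\Lambda(u,\lambda)du}\,M^{t,m^\lambda}_{\zeta_i}(d\lambda)\right].
\end{equation*}
Specialising to $B = U^\lambda$ identifies $\mathbb{P}_{\overline{m}}\bigl[\zeta_{i+1} > s \mid \mathcal{G}_{\zeta_i}\bigr]\mathbf{1}_{\{\zeta_i \leq s\}}$ with the denominator of $\mathfrak{M}_\lambda$, and dividing the two identities on the event $\{\zeta_i \leq s < \zeta_{i+1}\}$ delivers, via the characterising property of the regular conditional distribution, the announced equality.

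The main obstacle I anticipate is the preliminary reduction, namely checking rigorously that enlarging $\mathcal{G}_s$ by the observations of $X$, of the coupon process and of the $\epsilon_i$'s does not disclose any further information on $\lambda_0$; this rests on the product structure of $\mathbb{P}_{\overline{m}}$ and on the fact that the coefficients driving $X$ and the coupons depend on $\gamma_0,\upsilon_0,(\epsilon_i)$ but not on $\lambda_0$. A secondary, more routine point is the joint measurability of $\lambda \mapsto \int_{\zeta_i}^{s}\Lambda(u,\lambda)du$ that is required for the Bayes quotient to be well defined; this follows from Assumption \ref{h_lambda}(ii) together with Fubini's theorem, and the denominator is strictly positive $\mathbb{P}_{\overline{m}}$-a.s.\ on $\{\zeta_i \leq s\}$ since $M^{t,m^\lambda}_{\zeta_i}$ is a probability measure and the exponential weight is strictly positive.
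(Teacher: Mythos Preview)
Your proposal is correct in outline and reaches the same conclusion, but it is organized differently from the paper's argument, and the two routes trade off in an instructive way.

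The paper does \emph{not} first reduce to the Poisson filtration $\mathcal{G}_s$. Instead it works directly with the full observation filtration $\mathcal{F}^{z,\overline{m},\phi}_s$: it takes an arbitrary bounded test functional $\varphi(X^{z,\phi}_{\cdot\wedge s},\xi^{\phi}_{\cdot\wedge s})$, and on the event $\{\zeta_i\le s<\zeta_{i+1}\}$ decomposes it as $\overline{\varphi}(X^{z,\phi}_{\cdot\wedge\zeta_i},\xi^{\phi}_{\cdot\wedge\zeta_i},\delta X^{i}_{\cdot\wedge s},\delta\xi^{i}_{\cdot\wedge s})$, where the increments are measurable with respect to $\mathcal{F}^{z,\overline{m},\phi}_{\zeta_i}$ together with $(\upsilon_0,(\epsilon_j)_j)$, variables that are independent of $\lambda_0$. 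It then integrates out $\zeta_{i+1}$ using the density $\Lambda(u,\lambda_0)e^{-\int_{\zeta_i}^{u}\Lambda(v,\lambda_0)dv}$ (Remark~\ref{integrale_Lambda}) and runs the chain of equalities that produces $\mathfrak{M}_\lambda$. In other words, the decomposition of the path \emph{is} the paper's substitute for your ``preliminary reduction'': rather than proving that $\mathcal{F}^{z,\overline{m},\phi}_s$ and $\mathcal{G}_s$ yield the same posterior, the paper simply shows that any $\mathcal{F}^{z,\overline{m},\phi}_s$-test function, restricted to the event, factors through quantities that are either $\mathcal{F}_{\zeta_i}$-measurable or independent of $\lambda_0$. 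Your approach is more modular (sufficiency first, then Bayes), and makes the independence of $M^{t,m^\lambda}$ from $(x,\phi)$ transparent; the paper's approach is more self-contained and avoids having to state the sufficiency lemma separately.

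One small point to tighten in your write-up: you test only against $\mathcal{G}_{\zeta_i}$-measurable $Y$, but the target conditional expectation is with respect to $\mathcal{G}_s$ (or $\mathcal{F}^{z,\overline{m},\phi}_s$). To close the argument you should add the standard fact for point-process filtrations that on $\{\zeta_i\le s<\zeta_{i+1}\}$ any $\mathcal{G}_s$-measurable random variable coincides with a $\mathcal{G}_{\zeta_i}$-measurable one; equivalently, the trace of $\mathcal{G}_s$ on this event equals that of $\mathcal{G}_{\zeta_i}$. The paper sidesteps this by letting $\varphi$ range over functionals of the path up to $s$ from the outset.
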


\begin{proof} Hereafter, $\cdot \wedge \cdot$ denotes the minimum and $\cdot \vee \cdot$ the maximum. Let $\varphi$ be a Borel bounded function on $D([0, T], \mathbb{R}^{d+1})$. Set $\xi^{\phi} := \sum_{i \geq 1}r_{i}^{\phi}\mathbf{1}_{[\tau_{i}^{\phi}, +\infty[}$,    $\delta X^{i} := X_{\cdot \vee \zeta_{i}}^{z, \phi} - X_{\zeta_{i}}^{z, \phi}$, and   $\delta \xi^{i} := \xi_{\cdot \vee \zeta_{i}} - \xi_{\zeta_{i}}$. We can find a Borel measurable map $\overline{\varphi}$ such that
	\[
		\varphi(X_{\cdot \wedge s}^{z, \phi}, \xi_{\cdot \wedge s}^{\phi})\mathbf{1}_{\left\{ \zeta_{i} \leq s < \zeta_{i+1}\right\}} = \overline{\varphi}(X_{\cdot \wedge \zeta_{i}}^{z, \phi}, \xi_{\cdot \wedge \zeta_{i}}^{\phi}, \delta X^{i}_{\cdot \wedge s}, \delta \xi^{i}_{\cdot \wedge s})\mathbf{1}_{\left\{ \zeta_{i} \leq s < \zeta_{i+1}\right\}}.
	\]
In view of Remark \ref{integrale_Lambda}, it then follows:
	\[
	\begin{aligned}
		&\mathbb{E}_{\overline{m}}\left(\mathbf{1}_{\left\{\lambda_{0} \in B \right\}}\mathbf{1}_{\left\{ \zeta_{i} \leq s < \zeta_{i+1}\right\}}\varphi(X_{\cdot \wedge s}^{z, \phi}, \xi_{\cdot \wedge s}^{\phi})\right)  \\
&= \mathbb{E}_{\overline{m}}\left(\mathbf{1}_{\left\{\lambda_{0} \in B \right\}}\mathbf{1}_{\left\{ \zeta_{i} \leq s < \zeta_{i+1}\right\}}\overline{\varphi}(X_{\cdot \wedge \zeta_{i}}^{z, \phi}, \xi_{\cdot \wedge \zeta_{i}}^{\phi}, \delta X^{i}_{\cdot \wedge s}, \delta \xi^{i}_{\cdot \wedge s})\right) \\
		&= \mathbb{E}_{\overline{m}}\left(\int_{\mathbb{R}_{+}} \mathbf{1}_{\left\{\lambda_{0} \in B \right\}}\mathbf{1}_{\left\{ \zeta_{i} \leq s < u\right\}}\overline{\varphi}(X_{\cdot \wedge \zeta_{i}}^{z, \phi}, \xi_{\cdot \wedge \zeta_{i}}^{\phi}, \delta X^{i}_{\cdot \wedge s}, \delta \xi^{i}_{\cdot \wedge s})\Lambda(u, \lambda_{0}) e^{-\int_{\zeta_{i}}^{u}\Lambda(v, \lambda_{0})dv}du\right)\\
		&= \mathbb{E}_{\overline{m}}\left(\mathbf{1}_{\left\{\lambda_{0} \in B \right\}}\overline{\varphi}(X_{\cdot \wedge \zeta_{i}}^{z, \phi}, \xi_{\cdot \wedge \zeta_{i}}^{\phi}, \delta X^{i}_{\cdot \wedge s}, \delta \xi^{i}_{\cdot \wedge s})\int_{\mathbb{R}_{+}}\mathbf{1}_{\left\{ \zeta_{i} \leq s < u\right\}}\Lambda(u, \lambda_{0}) e^{-\int_{\zeta_{i}}^{u}\Lambda(v, \lambda_{0})dv}du\right) \\
		&= \mathbb{E}_{\overline{m}}\left(\mathbf{1}_{\left\{\lambda_{0} \in B \right\}}\overline{\varphi}(X_{\cdot \wedge \zeta_{i}}^{z, \phi}, \xi_{\cdot \wedge \zeta_{i}}^{\phi}, \delta X^{i}_{\cdot \wedge s}, \delta \xi^{i}_{\cdot \wedge s})\mathbf{1}_{\left\{ \zeta_{i} \leq s \right\}} e^{-\int_{\zeta_{i}}^{s}\Lambda(v, \lambda_{0})dv}\right) \\
		&= \mathbb{E}_{\overline{m}}\left(\overline{\varphi}(X_{\cdot \wedge \zeta_{i}}^{z, \phi}, \xi_{\cdot \wedge \zeta_{i}}^{\phi}, \delta X^{i}_{\cdot \wedge s}, \delta \xi^{i}_{\cdot \wedge s})\mathbf{1}_{\left\{ \zeta_{i} \leq s \right\}} \int_{B} e^{-\int_{\zeta_{i}}^{s}\Lambda(v, \lambda)dv}M_{\zeta_{i}}^{t,m^{\lambda}}(d\lambda)\right) \\
		&= \mathbb{E}_{\overline{m}}\left(\overline{\varphi}(X_{\cdot \wedge \zeta_{i}}^{z, \phi}, \xi_{\cdot \wedge \zeta_{i}}^{\phi}, \delta X^{i}_{\cdot \wedge s}, \delta \xi^{i}_{\cdot \wedge s})\mathbf{1}_{\left\{ \zeta_{i} \leq s \right\}}\mathfrak{M}_{\lambda}(B ; \zeta_{i}, s)\int_{U^{\lambda}}e^{-\int_{\zeta_{i}}^{s}\Lambda(v, \lambda)dv}M_{\zeta_{i}}^{t,m^{\lambda}}(d\lambda)\right) \\
	&= \mathbb{E}_{\overline{m}}\left(\overline{\varphi}(X_{\cdot \wedge \zeta_{i}}^{z, \phi}, \xi_{\cdot \wedge \zeta_{i}}^{\phi}, \delta X^{i}_{\cdot \wedge s}, \delta \xi^{i}_{\cdot \wedge s})\mathbf{1}_{\left\{\zeta_{i} \leq s < \zeta_{i+1}\right\}}\mathfrak{M}_{\lambda}(B ; \zeta_{i}, s)\right) 	\\
	&= \mathbb{E}_{\overline{m}}\left(\varphi(X_{\cdot \wedge s}^{z, \phi}, \xi_{\cdot \wedge s}^{\phi})\mathbf{1}_{\left\{\zeta_{i} \leq s < \zeta_{i+1}\right\}}\mathfrak{M}_{\lambda}(B ; \zeta_{i}, s)\right) 
		\end{aligned}
	\]
This shows that $M_{s}^{t,m^{\lambda}}(B)\mathbf{1}_{\left\{\zeta_{i} \leq s < \zeta_{i+1}\right\}} = \mathfrak{M}_{\lambda}(B ; \zeta_{i}, s)\mathbf{1}_{\left\{\zeta_{i} \leq s < \zeta_{i+1}\right\}} \ \ \mathbb{P}_{\overline{m}}$-a.s.
\end{proof}

\begin{lemma}\label{bayes_L1}
	For all $m^{\lambda} \in \mathbf{M}^{\lambda}$ and almost all $s \geq t$, we have
    \begin{itemize}
    \item[i)]
    	\[
        	\int_{U^{\lambda}}\Lambda(s, \lambda)M_{s}^{t,m^{\lambda}}(d\lambda) < +\infty \ \ \mathbb{P}_{\overline{m}}-a.s.
        \]
       \item[ii)] 
       				\[
        					\int_{U^{\lambda}}\Lambda(\zeta_{i}, \lambda)M_{\zeta_{i}-}^{t,m^{\lambda}}(d\lambda) < +\infty \ \ \mathbb{P}_{\overline{m}}-a.s, \ \ i \geq 1.
				    \]
       \item[iii)]
       				\[
        					\int_{U^{\lambda}}\Lambda(s, \lambda)M_{s-}^{t,m^{\lambda}}(d\lambda) < +\infty \ \ \mathbb{P}_{\overline{m}}-a.s.
				    \]

       \end{itemize}
\end{lemma}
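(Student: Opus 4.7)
The plan is to exploit the explicit form of $M_s^{t,m^\lambda}$ given by Lemma \ref{bayes_poisson_pas_saut} and its natural left-limit counterpart. Between two jumps, on $\{\zeta_i\le s<\zeta_{i+1}\}$, we have $M_s^{t,m^\lambda}=\mathfrak M_\lambda(\cdot;\zeta_i,s)$; passing $s\uparrow \zeta_i$ using Assumption \ref{h_lambda} i)--ii) and dominated convergence gives $M_{\zeta_i-}^{t,m^\lambda}=\mathfrak M_\lambda(\cdot;\zeta_{i-1},\zeta_i)$. In each of the three items, the integral to be controlled takes the form (numerator)/(denominator), and the denominator $\int_{U^\lambda} e^{-\int_{\zeta_i}^{s}\Lambda(u,\lambda)du}M_{\zeta_i}^{t,m^\lambda}(d\lambda)$ (or its analogue with $\zeta_i$ replaced by $\zeta_{i-1}$ and $s$ by $\zeta_i$) is the integral of a strictly positive function against a probability measure, hence strictly positive $\mathbb P_{\overline m}$-a.s. by Assumption \ref{h_lambda} i). The work is therefore concentrated on the numerator.

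For (i), I would apply Tonelli on the event $\{\zeta_i\le s\}$ together with Remark \ref{integrale_Lambda} to obtain
\[
\int_t^T\!\!\int_{U^\lambda}\Lambda(s,\lambda)\,e^{-\int_{\zeta_i}^{s}\Lambda(u,\lambda)du}\mathbf 1_{\{\zeta_i\le s\}}M_{\zeta_i}^{t,m^\lambda}(d\lambda)\,ds=\!\int_{U^\lambda}\!\bigl(1-e^{-\int_{\zeta_i}^{T}\Lambda(u,\lambda)du}\bigr)M_{\zeta_i}^{t,m^\lambda}(d\lambda)\le 1,
\]
so that for a.e. $s$, $\mathbb P_{\overline m}$-a.s., the inner integral is finite. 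A countable union over $i\ge 0$ preserves this, and on $\{\zeta_i\le s<\zeta_{i+1}\}$ it yields finiteness of $\int\Lambda(s,\lambda) M_s^{t,m^\lambda}(d\lambda)$.

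For (ii), the key observation is that, conditionally on $\mathcal F_{\zeta_{i-1}}^{z,\overline m,\phi}$, the inter-arrival $\zeta_i$ admits on $(\zeta_{i-1},+\infty)$ the Lebesgue density
\[
f_i(s)=\int_{U^\lambda}\Lambda(s,\lambda)\,e^{-\int_{\zeta_{i-1}}^{s}\Lambda(u,\lambda)du}M_{\zeta_{i-1}}^{t,m^\lambda}(d\lambda),
\]
obtained by a Tonelli argument parallel to the one used in the proof of Lemma \ref{bayes_poisson_pas_saut}, and which integrates to $1$ by Assumption \ref{h_lambda} iv) together with Remark \ref{integrale_Lambda}. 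Hence $f_i$ is finite for Lebesgue-a.e. $s$; by absolute continuity of $\zeta_i$ conditionally on $\mathcal F_{\zeta_{i-1}}^{z,\overline m,\phi}$, one has $f_i(\zeta_i)<+\infty$ $\mathbb P_{\overline m}$-a.s. Because $f_i(\zeta_i)$ is exactly the numerator in the ratio expression of $\int\Lambda(\zeta_i,\lambda)M_{\zeta_i-}^{t,m^\lambda}(d\lambda)$ coming from the left-limit formula for $M_{\zeta_i-}^{t,m^\lambda}$, (ii) follows.

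Finally, (iii) is immediate once (i) is known: the process $s\mapsto M_s^{t,m^\lambda}$ has its jumps contained in $\{\zeta_i:i\ge1\}$, a countable hence Lebesgue-null set, so $M_{s-}^{t,m^\lambda}=M_s^{t,m^\lambda}$ for Lebesgue-a.e. $s$, $\mathbb P_{\overline m}$-a.s., and the conclusion is inherited from (i). The main technical obstacle is really (ii): one must justify carefully the conditional density computation for $\zeta_i$ and argue that the $\mathbb P_{\overline m}$-null event $\{f_i(\zeta_i)=+\infty\}$ indeed carries zero mass, both of which rely on the full force of Assumption \ref{h_lambda} (i) and (iv) to make the Tonelli interchanges valid and to guarantee $\int f_i\,ds=1$.
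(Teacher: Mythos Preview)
Your argument is correct, and for part (i) it coincides with the paper's Step~2 (Tonelli plus Remark~\ref{integrale_Lambda}). The differences lie in (ii) and (iii).

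For (ii) the paper proceeds differently. It first proves an auxiliary Step~1: the jump times $\zeta_i$ almost surely avoid the (at most countable) discontinuity set of $s\mapsto\Lambda(s,\lambda_0)$, because each $\zeta_i$ has a Lebesgue density conditional on $\lambda_0$. Then, using the continuous evolution of $M^{t,m^\lambda}$ on $(\zeta_{i-1},\zeta_i)$ together with (i), it obtains $\int\Lambda(\zeta_i-,\lambda)M_{\zeta_i-}^{t,m^\lambda}(d\lambda)<\infty$, and Step~1 allows one to replace $\Lambda(\zeta_i-,\lambda)$ by $\Lambda(\zeta_i,\lambda)$. Your route bypasses Step~1 entirely: you recognise the numerator $\int\Lambda(\zeta_i,\lambda)e^{-\int_{\zeta_{i-1}}^{\zeta_i}\Lambda}M_{\zeta_{i-1}}(d\lambda)$ as the conditional density $f_i$ of the next jump time evaluated at $\zeta_i$ itself, and use integrability of $f_i$ (guaranteed by Assumption~\ref{h_lambda}~iv)) to conclude $f_i(\zeta_i)<\infty$ a.s. This is more direct and makes the role of item iv) of Assumption~\ref{h_lambda} transparent; the paper's approach, on the other hand, isolates the ``no-jump-at-a-discontinuity'' fact, which it reuses elsewhere.

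For (iii), the paper argues via a case distinction on the set $A=\{s:m^\lambda[\Lambda(s,\lambda_0)=0]<1\}$ and invokes (ii), whereas you simply observe that the jump set $\{\zeta_i:i\ge1\}$ is Lebesgue-null so that $M_{s-}=M_s$ for a.e.~$s$ and (iii) is inherited from (i). Your argument is shorter and equally valid.
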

\begin{proof}
	{\bf Step 1}. For almost all $\lambda \in U^{\lambda}$, we fix $\mathcal{N}_{\lambda} \subset [0, T]$ the set of discontinuity of $t \mapsto \Lambda(t, \lambda)$ which is, at most, countable. We introduce:
	\[    
	    \mathcal{N}^{c} := \{\forall i \geq 1, \zeta_{i} \not\in \mathcal{N}_{\lambda_{0}}\}.
    \]
    We shall show that $\mathbb{P}_{\overline{m}}(\mathcal{N}^{c}) = 1$ by showing that  $\mathbb{P}_{\overline{m}}(\zeta_{i} \in \mathcal{N}_{\lambda_{0}}) = 0$ for all $i \geq 1$. Fix $i \geq 1$ and  remark that,  given $\lambda \in U^{\lambda}$, the distribution of $\zeta_{i} \mid \{\lambda_{0} = \lambda\}$ is absolutely continuous with respect to the Lebesgue measure. Denote by $f_{i \mid \lambda}$ a corresponding density function. Then,
    \[
    	\mathbb{P}_{\overline{m}}(\zeta_{i} \in \mathcal{N}_{\lambda_{0}}) = \int_{U^{\lambda}}\left[\int_{\mathbb{R}_{+}}\mathbf{1}_{\mathcal{N}_{\lambda}}(z)f_{i \mid \lambda}(z)dz\right]dm^{\lambda}(\lambda) = \int_{U^{\lambda}}0\,dm^{\lambda}(\lambda) = 0.
    \]
    
    {\bf Step 2.} We show {\it i)}. We set:
    	\[
        	K_{i}(s) := \left(\int_{U^{\lambda}}e^{-\int_{\zeta_{i}}^{s}\Lambda(u, \lambda)du}M_{\zeta_{i}}^{t,m^{\lambda}}(d\lambda) \right)^{-1} \leq  K_{i}(\zeta_{i+1}) \ \ \text{ on } \{\zeta_{i} \leq s < \zeta_{i+1}\}.
        \]
        We have, from Remark \ref{localement_Lambda},
        \[
        	K_{i}(\zeta_{i+1}) < +\infty.
        \]
Moreover, by Fubini's Lemma and Remark \ref{integrale_Lambda}, 
        
		\[
       		\begin{aligned}
            	\int_{\zeta_{i}}^{\zeta_{i+1}}\int_{U^{\lambda}}\Lambda(s, \lambda)e^{-\int_{\zeta_{i}}^{s}\Lambda(u, \lambda)du}M_{\zeta_{i}}^{t,m^{\lambda}}(d\lambda)ds &= \int_{U^{\lambda}}\int_{\zeta_{i}}^{\zeta_{i+1}}\Lambda(s, \lambda)e^{-\int_{\zeta_{i}}^{s}\Lambda(u, \lambda)du}dsM_{\zeta_{i}}^{t,m^{\lambda}}(d\lambda) \\
                &= \int_{U^{\lambda}}[1-e^{-\int_{\zeta_{i}}^{\zeta_{i+1}}\Lambda(u, \lambda)du}]M_{\zeta_{i}}^{t,m^{\lambda}}(d\lambda) < +\infty,
       		\end{aligned}
        \]
        on $\mathcal{N}^{c}$. On the other hand, using Lemma \ref{bayes_poisson_pas_saut}, 
        \[
        	\int_{\zeta_{i}}^{\zeta_{i+1}} \int_{U^{\lambda}}\Lambda(s, \lambda)M_{s}^{t,m^{\lambda}}(d\lambda)ds \leq   K_{i}(\zeta_{i+1})\int_{\zeta_{i}}^{\zeta_{i+1}}\int_{U^{\lambda}}\Lambda(s, \lambda)e^{-\int_{\zeta_{i}}^{s}\Lambda(u, \lambda)du}M_{\zeta_{i}}^{t,m^{\lambda}}(d\lambda)ds <+\infty
        \]
        on $\mathcal{N}^{c}$. 
        This shows that, for almost all $s \geq t$,  
        \[
        	\mathbf{1}_{\{\zeta_{i} < s < \zeta_{i+1}\}}\int_{U^{\lambda}}\Lambda(s, \lambda)M_{s}^{t,m^{\lambda}}(d\lambda) < +\infty \;\;\mbox{on $\mathcal{N}^{c}$.}
        \]
	        This leads to the result since $\zeta_{i} \rightarrow +\infty$ when $i \rightarrow +\infty$ for almost all $\omega$.

	{\bf Step 3.} We show {\it ii)}. Since $M^{t, m^{\lambda}}$ evolves continuously on all $]\zeta_{i}, \zeta_{i+1}[$, we also have,
    	\[
        	\int_{U^{\lambda}}\Lambda(\zeta_{i}-, \lambda)M_{\zeta_{i}-}^{t,m^{\lambda}}(d\lambda) < +\infty \ \ \mathbb{P}_{\overline{m}}-a.s.
        \]
      
    Moreover, on $\mathcal{N}^{c}$, $\zeta_{i}$ cannot be on a discontinuity of $\Lambda$ by construction, $ i \geq 1$. Then, we have, on $\mathcal{N}^{c}$,
    
    	\[
			\int_{U^{\lambda}}\Lambda(\zeta_{i}, \lambda)M_{\zeta_{i}-}^{t,m^{\lambda}}(d\lambda) < +\infty.
		\]
        
        {\bf Step 4.} We show {\it iii)}. We introduce:
        \[
        	A := \{s \in [t, T] : m^{\lambda}\left[\Lambda(s, \lambda_{0}) = 0\right] < 1 \}.
        \]
        
        Recall that, by construction, $M_{s}^{t, m^{\lambda}} << m^{\lambda}$ for all $s \geq t$. If $s \not\in A$, $\int_{U^{\lambda}}\Lambda(s, \lambda)M_{s}^{t,m^{\lambda}}(d\lambda) = 0 < +\infty$. If $s \in A$, the distribution of $\zeta_{i}$ is equivalent to the Lebesgue measure and then, by $ii)$, we get the result.
\end{proof}

We now look at the intensity at the observation of a jump $\zeta_{i}$.

\begin{lemma}\label{bayes_poisson_saut}
	For all  $z=(t,x, c, l)\in \mathbf{Z}$ and $B \in \mathcal{B}(U^{\lambda})$, 
		\[
			M_{\zeta_{i}}^{t,m^{\lambda}}(B) = \frac{\int_{B}\Lambda(\zeta_{i}, \lambda) M_{\zeta_{i}-}^{t,m^{\lambda}}(d\lambda)}{\int_{U^{\lambda}}\Lambda(\zeta_{i}, \lambda) M_{\zeta_{i}-}^{t,m^{\lambda}}(d\lambda)}, \ i \geq 1.
		\]
\end{lemma}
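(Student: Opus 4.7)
My plan is to mimic the proof of Lemma \ref{bayes_poisson_pas_saut}. I fix an arbitrary bounded Borel function $\varphi$ on $D([0,T], \mathbb{R}^{d+1})$ and compute, in two different ways, the quantity $\mathbb{E}_{\overline{m}}[\mathbf{1}_{\{\lambda_0 \in B\}} \varphi(X^{z,\phi}_{\cdot \wedge \zeta_i}, \xi^{\phi}_{\cdot \wedge \zeta_i}) \mathbf{1}_{\{\zeta_i \leq T\}}]$, with $\xi^\phi := \sum_{j\geq 1} r_j^\phi \mathbf{1}_{[\tau_j^\phi, +\infty[}$. As in the previous lemma, I decompose this test function as $\overline{\varphi}(X^{z,\phi}_{\cdot \wedge \zeta_{i-1}}, \xi^{\phi}_{\cdot \wedge \zeta_{i-1}}, \delta X^{i-1}_{\cdot \wedge \zeta_i}, \delta \xi^{i-1}_{\cdot \wedge \zeta_i}, \zeta_i, u_i)$ where $u_i$ is the jump size at $\zeta_i$, the point being that the $\mathcal{F}_{\zeta_i}$-information is determined by the pre-$\zeta_{i-1}$ history together with $(\zeta_i, u_i)$ (the increment between jumps is reconstructed from the drift on $(\zeta_{i-1}, \zeta_i)$ and the controls $\tau_j^\phi$ taken in this interval, which are $\mathcal{F}_{\zeta_{i-1}}$-predictable since no new jump has occurred).

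Next I use the key probabilistic fact: conditional on $\mathcal{F}_{\zeta_{i-1}} \vee \sigma(\lambda_0, \gamma_0)$, the variable $\zeta_i$ admits the density $u \mapsto \Lambda(u, \lambda_0) e^{-\int_{\zeta_{i-1}}^u \Lambda(v,\lambda_0) dv} \mathbf{1}_{u > \zeta_{i-1}}$, while $u_i$ is independently distributed as $\Upsilon(\gamma_0, \cdot)$. This gives
\[
\mathbb{E}_{\overline{m}}\!\left[\mathbf{1}_B(\lambda_0) \overline{\varphi}(\cdot, \zeta_i, u_i) \mathbf{1}_{\{\zeta_i \leq T\}}\right] = \mathbb{E}_{\overline{m}}\!\left[\mathbf{1}_B(\lambda_0) \int_{\zeta_{i-1}}^T \!\!\int_{\mathbb{R}^{d*}} \overline{\varphi}(\cdot, s, u) \Lambda(s, \lambda_0) e^{-\int_{\zeta_{i-1}}^s \Lambda(v, \lambda_0) dv} \Upsilon(\gamma_0, du)\, ds\right].
\]
I then condition on $\mathcal{F}_{\zeta_{i-1}}\vee \sigma(\gamma_0, \upsilon_0, (\epsilon_j)_{j})$ and integrate $\mathbf{1}_B(\lambda_0)$ against $M_{\zeta_{i-1}}^{t,m^\lambda}(d\lambda)$, so the $\lambda$-integral inside the $ds$ becomes $\int_B \Lambda(s,\lambda) e^{-\int_{\zeta_{i-1}}^s \Lambda(v,\lambda)dv} M_{\zeta_{i-1}}^{t,m^\lambda}(d\lambda)$.

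The crucial step is now to invoke Lemma \ref{bayes_poisson_pas_saut}: since $M^{t,m^\lambda}$ has only countably many jumps, for $ds$-a.e.\ $s$ on the event $\{\zeta_{i-1} \leq s < \zeta_i\}$ we have $M^{t,m^\lambda}_{s-} = M^{t,m^\lambda}_s = \mathfrak{M}_\lambda(\cdot; \zeta_{i-1}, s)$, hence
\[
e^{-\int_{\zeta_{i-1}}^s \Lambda(v,\lambda) dv} M_{\zeta_{i-1}}^{t,m^\lambda}(d\lambda) = K_{i-1}(s)\, M_{s-}^{t,m^\lambda}(d\lambda), \qquad K_{i-1}(s) := \int_{U^\lambda} e^{-\int_{\zeta_{i-1}}^s \Lambda(v,\lambda) dv} M_{\zeta_{i-1}}^{t,m^\lambda}(d\lambda),
\]
which is finite and positive on $\mathcal{N}^c$ by Assumption \ref{h_lambda} and Step 1 of the proof of Lemma \ref{bayes_L1}. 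Substituting this identity, and using Lemma \ref{bayes_L1}(iii) to legitimately divide by $\int_{U^\lambda} \Lambda(s,\lambda) M_{s-}^{t,m^\lambda}(d\lambda)$, the $\lambda$-integral factors as $\psi(s) \cdot \int_{U^\lambda} \Lambda(s,\lambda) e^{-\int_{\zeta_{i-1}}^s \Lambda(v,\lambda)dv} M_{\zeta_{i-1}}^{t,m^\lambda}(d\lambda)$ with $\psi(s) := \frac{\int_B \Lambda(s,\lambda) M_{s-}^{t,m^\lambda}(d\lambda)}{\int_{U^\lambda} \Lambda(s,\lambda) M_{s-}^{t,m^\lambda}(d\lambda)}$. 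Since $\psi$ is predictable, I run the conditional-density computation of the first paragraph \emph{in reverse} with $\psi(\zeta_i)\overline{\varphi}$ in place of $\mathbf{1}_B(\lambda_0)\overline{\varphi}$, obtaining
\[
\mathbb{E}_{\overline{m}}\!\left[\mathbf{1}_B(\lambda_0) \varphi \mathbf{1}_{\{\zeta_i \leq T\}}\right] = \mathbb{E}_{\overline{m}}\!\left[\psi(\zeta_i)\, \varphi \mathbf{1}_{\{\zeta_i \leq T\}}\right].
\]
By a standard monotone class / density argument on $\varphi$, this characterises $M_{\zeta_i}^{t,m^\lambda}(B)$ as $\psi(\zeta_i)$, which is the desired identity.

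The main obstacle is the measure-theoretic swap $M_{s-}^{t,m^\lambda} \leftrightarrow M_s^{t,m^\lambda}$ for a.e.\ $s$ and the legitimacy of factoring out $\psi(s)$: both rely on the finiteness statements of Lemma \ref{bayes_L1} and on the fact, established in Step 1 of its proof, that $\mathbb{P}_{\overline{m}}$-a.s.\ none of the $\zeta_j$'s lie in the (countable) discontinuity set of $\Lambda(\cdot, \lambda_0)$, so that $M^{t,m^\lambda}$ is indeed continuous on $]\zeta_{i-1}, \zeta_i[$ and $\Lambda(\zeta_i-, \lambda) = \Lambda(\zeta_i, \lambda)$ $M_{\zeta_i-}^{t,m^\lambda}$-a.s.
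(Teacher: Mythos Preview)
Your argument is correct but follows a genuinely different route from the paper. The paper proceeds by a \emph{thickening} argument: it first computes the posterior on the event $B_{i}(\zeta):=\{\zeta_{i-1}<s,\ \zeta_{i}\in[s,s+h],\ s+h<\zeta_{i+1}\}$, obtaining an expression $\mathfrak{M}'_{\lambda}(B;M^{t,m^{\lambda}}_{s-},s,h)$ that involves $\int_{s}^{s+h}\Lambda(u,\lambda)\,du$, and then sends $h\downarrow 0$. The passage to the limit requires the domination hypothesis Assumption~\ref{h_lambda}(iii) together with Lemma~\ref{bayes_L1} to apply dominated convergence, and the absolute continuity of the law of $\zeta_{1}$ to transfer the identity from deterministic $s$ to the random time $\zeta_{1}$; the general case follows by induction.

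By contrast, you integrate directly against the conditional density of $\zeta_{i}$ given $\mathcal{F}_{\zeta_{i-1}}$, then invoke Lemma~\ref{bayes_poisson_pas_saut} to rewrite $e^{-\int_{\zeta_{i-1}}^{s}\Lambda}M^{t,m^{\lambda}}_{\zeta_{i-1}}(d\lambda)=K_{i-1}(s)\,M^{t,m^{\lambda}}_{s-}(d\lambda)$ and factor out the predictable ratio $\psi(s)$, before running the density computation in reverse. This is cleaner: it sidesteps the limiting procedure entirely, uses Lemma~\ref{bayes_L1} only for the well-posedness of $\psi$, and in particular does not appeal to Assumption~\ref{h_lambda}(iii). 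The price is that your decomposition of $\varphi$ at $\zeta_{i}$ (rather than between two jumps, as in Lemma~\ref{bayes_poisson_pas_saut}) is slightly more delicate, since it must absorb the jump size $u_{i}$ and the control actions taken on $(\zeta_{i-1},\zeta_{i})$; you handle this correctly by noting that those actions are determined by $\mathcal{F}_{\zeta_{i-1}}$ together with elapsed time and the auxiliary variables $(\upsilon_{0},(\epsilon_{j})_{j})$, all independent of $\lambda_{0}$.
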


\begin{proof}
	We use the same notations as in the proof of Lemma \ref{bayes_poisson_pas_saut}. 
    
    \medskip

	\textbf{1.} For ease of notation, we set $B_{i}(\zeta) := \left\{\zeta_{i-1} < s, \zeta_{i} \in [s,s+h], s+h <\zeta_{i+1}\right\}$. For $s > 0$, we show that, with $\zeta_{0} := 0$,
	\begin{equation}\label{bayes_poisson_saut_h}
		\begin{aligned}
		M_{s+h}^{t,m^{\lambda}}(B)&\mathbf{1}_{B_{i}(\zeta)} = \mathfrak{M}_{\lambda}'(B ; M_{s-}^{t,m^{\lambda}}, s, h)\mathbf{1}_{B_{i}(\zeta)},
		\end{aligned}
	\end{equation}
    where
    \[
    \mathfrak{M}_{\lambda}'(B ; M_{s-}^{t,m^{\lambda}}, s, h) := \frac{\int_{B}\left[\int_{s}^{s+h}\Lambda(u, \lambda)du\right] e^{-\int_{s}^{s+h}\Lambda(u, \lambda)du}M_{s-}^{t,m^{\lambda}}(d\lambda)}{\int_{U^{\lambda}}\left[\int_{s}^{s+h}\Lambda(u, \lambda)du\right] e^{-\int_{s}^{s+h}\Lambda(u, \lambda)du}M_{s-}^{t,m^{\lambda}}(d\lambda)}.
    \]
    
  Let $\varphi$ be a Borel bounded function of $D([0, T], \mathbb{R}^{d+1})$, we can find a Borel measurable map $\overline{\varphi}$ such that
	\[
		\varphi(X_{\cdot \wedge s+h}^{z, \phi}, \xi_{\cdot \wedge s+h}^{\phi})\mathbf{1}_{B_{i}(\zeta)} = \overline{\varphi}(X_{\cdot \wedge s}^{z, \phi}, \xi_{\cdot \wedge s}^{\phi}, \delta X^{i}_{\cdot \wedge s+h}, \delta \xi^{i}_{\cdot \wedge s+h})\mathbf{1}_{B_{i}(\zeta)}.
	\]
We shall write $\overline{\varphi}(X, \xi)$ for $\overline{\varphi}(X_{\cdot \wedge s}^{z, \phi}, \xi_{\cdot \wedge s}^{\phi}, \delta X^{i}_{\cdot \wedge s+h}, \delta \xi^{i}_{\cdot \wedge s+h})$. It then follows: 
	\[
	\begin{aligned}
		&\mathbb{E}_{\overline{m}}\left(\mathbf{1}_{\left\{\lambda_{0} \in B \right\}}\mathbf{1}_{B_{i}(\zeta)}\varphi(X_{\cdot \wedge s+h}^{z, \phi}, \xi_{\cdot \wedge s+h}^{\phi})\right)  \\
		&= \mathbb{E}_{\overline{m}}\left(\mathbf{1}_{\left\{\lambda_{0} \in B \right\}}\mathbf{1}_{B_{i}(\zeta)}\overline{\varphi}(X, \xi)\right) \\
		&= \mathbb{E}_{\overline{m}}\left(\int_{U^{\lambda}} \mathbf{1}_{\left\{\lambda \in B \right\}}\mathbf{1}_{\left\{ \zeta_{i-1} < s\right\}}\overline{\varphi}(X, \xi)\left[\int_{s}^{s+h}\Lambda(u, \lambda)du\right] e^{-\int_{s}^{s+h}\Lambda(u, \lambda)du}M_{s-}^{t,m^{\lambda}}(d\lambda)\right)\\
		&= \mathbb{E}_{\overline{m}}\left(\overline{\varphi}(X, \xi)\mathbf{1}_{\left\{ \zeta_{i-1} < s \right\}} \int_{B} \left[\int_{s}^{s+h}\Lambda(u, \lambda)du\right] e^{-\int_{s}^{s+h}\Lambda(u, \lambda)du}M_{s-}^{t,m^{\lambda}}(d\lambda)\right) \\
		&= \mathbb{E}_{\overline{m}}\left(\overline{\varphi}(X, \xi)\mathbf{1}_{\left\{ \zeta_{i-1} < s \right\}}\mathfrak{M}_{\lambda}'(B ; M_{s-}^{t,m^{\lambda}}, s, h)\int_{U^{\lambda}}\left[\int_{s}^{s+h}\Lambda(u, \lambda)du\right] e^{-\int_{s}^{s+h}\Lambda(u, \lambda)du}M_{s-}^{t,m^{\lambda}}(d\lambda)\right) \\
	&= \mathbb{E}_{\overline{m}}\left(\overline{\varphi}(X, \xi)\mathbf{1}_{B_{i}(\zeta)}\mathfrak{M}_{\lambda}'(B ; M_{s-}^{t,m^{\lambda}}, s, h)\right) \\
	&= \mathbb{E}_{\overline{m}}\left(\varphi(X_{\cdot \wedge s+h}^{z, \phi}, \xi_{\cdot \wedge s+h}^{\phi})\mathbf{1}_{B_{i}(\zeta)}\mathfrak{M}_{\lambda}'(B ; M_{s-}^{t,m^{\lambda}}, s, h)\right) 
		\end{aligned}
	\]
This shows that (\ref{bayes_poisson_saut_h}) hold $\mathbb{P}_{\overline{m}}$-a.s.

\textbf{2.} For $i=1$, on $\{\zeta_{1} \geq s\}$, by Lemma \ref{bayes_L1} \textit{iii)}, $\Lambda(s, \lambda_{0}) \in L^{1}(M_{s-}^{t, m^{\lambda}})$ for almost all $s$. Using remark \ref{continuite_Lambda}, by the dominated convergence theorem, we deduce that
	\[
		M_{s}^{t,m^{\lambda}}(B)\mathbf{1}_{\left\{\zeta_{0} < s,  \zeta_{1} = s\right\}} = \frac{\int_{B}\Lambda(s, \lambda) M_{s-}^{t,m^{\lambda}}(d\lambda)}{\int_{U^{\lambda}}\Lambda(s, \lambda) M_{s-}^{t,m^{\lambda}}(d\lambda)}\mathbf{1}_{\left\{\zeta_{0} < s,  \zeta_{1} = s\right\}},
	\]
i.e., since the law of $\zeta_{1}$ is absolutely continuous with respect to the Lebesgue measure,
	\[
		M_{\zeta_{1}}^{t,m^{\lambda}}(B) = \frac{\int_{B}\Lambda(\zeta_{1}, \lambda) M_{\zeta_{1}-}^{t,m^{\lambda}}(d\lambda)}{\int_{U^{\lambda}}\Lambda(\zeta_{1}, \lambda) M_{\zeta_{1}-}^{t,m^{\lambda}}(d\lambda)} \ \ \mathbb{P}_{\overline{m}} \text{ - a.s.}
	\]
Since almost surely, $\zeta_{i+1} > \zeta_{i}, i \geq 1$, and since the law of each $\zeta_{i}$ is absolutely continuous with respect to the Lebesgue measure, we deduce the result by a straightforward induction.
\end{proof}

With lemma \ref{bayes_poisson_pas_saut} and lemma \ref{bayes_poisson_saut}, a direct induction shows proposition \ref{bayes_poisson}.

	\subsubsection{Evolution of the parameters $\gamma_{0}$ and $\upsilon_{0}$}

We use the notations of Section \ref{intensity}. We define $M_{s}^{t,m^{\gamma}}(B) := \mathbb{E}_{\overline{m}}\left(\mathbf{1}_{\left\{\gamma \in B\right\}} | \mathcal{F}_{s}^{z,\overline{m}, \phi}\right)$ and \\ $M_{s}^{z,m^{\upsilon}, \phi}(B) := \mathbb{E}_{\overline{m}}\left(\mathbf{1}_{\left\{\upsilon \in B\right\}} | \mathcal{F}_{s}^{z,\overline{m}, \phi}\right)$.

Between two jumps of the random Poisson measure, no information about the size distribution of the jumps is revealed, and therefore, about $\gamma_{0}$. Whereas no information is revealed about $\upsilon$ between two jumps from our control. In this case, both processes should remain constant. At the $i$-th Poisson jump of size $u_{i}$, the process $M^{t, m^{\gamma}}$ should evolve according to the classical Bayes rule. The process $M^{z, m^{\upsilon}, \phi}$ should evolve at the time of issuance of the $j$-th CAT bond with the coupon $c_{j}$ according to, again, the Bayes rule.
\begin{lemma} Fix $s\ge 0$. Assume that, for almost all $\gamma \in U^{\gamma}$, the claim size distribution is dominated by some common measure $\mu_{\circ}$.
	We have
		\[
			\begin{aligned}
				&M_{s}^{t,m^{\gamma}}(B)\mathbf{1}_{\left\{\zeta_{i} \leq s < \zeta_{i+1}\right\}} = M_{\zeta_{i}}^{t,m^{\gamma}}(B)\mathbf{1}_{\left\{\zeta_{i} \leq s < \zeta_{i+1}\right\}} \\
				&M_{\zeta_{i}}^{t,m^{\gamma}}(B) = \mathfrak{M}_{\gamma}(M_{\zeta_{i}-}^{t,m^{\gamma}}(B); U_{i})
			\end{aligned}
		\]
in which
		\[
			\mathfrak{M}_{\gamma}(m_{\circ}^{\gamma}; u_{\circ}) = \frac{\int_{B}\qr_{\gamma}(u_{\circ} \mid \gamma)dm_{\circ}^{\gamma}(\gamma)}{\int_{U}\qr_{\gamma}(u_{\circ} \mid \gamma)dm_{\circ}^{\gamma}(\gamma)}.
		\]
for almost all $(m_{\circ}^{\gamma}, u_{\circ}) \in \mathbf{M}^{\gamma} \times \mathbb{R}^{d*}$,  in which $\qr_{\gamma}(u_{\circ} \mid \gamma)$ is the conditional density, with respect to $m_{\circ}^{\gamma}$, of observing a jump of size $u_{\circ}$ knowing $\{\gamma_{0} = \gamma\}$.

Moreover,
		\[
			\begin{aligned}
				&M_{s}^{t,m^{\upsilon}, \phi}(B)\mathbf{1}_{\left\{\tau_{j} \leq s < \tau_{j+1}\right\}} = M_{\tau_{j}}^{t,m^{\upsilon}, \phi}(B)\mathbf{1}_{\left\{\tau_{j} \leq s < \tau_{j+1}\right\}} \\
				&M_{\tau_{j}}^{t,m^{\upsilon}, \phi}(B) = \mathfrak{M}_{\upsilon}(M_{\tau_{j}-}^{t,m^{\upsilon}, \phi}(B); r_{j}, \tau_{j}, X_{\tau_{j}-}^{z, \phi}, \alpha_{j})
			\end{aligned}
		\]
in which
		\[
			\mathfrak{M}_{\upsilon}(m_{\circ}^{\upsilon}; r_{\circ}, t_{\circ}, x_{\circ}, a_{\circ}) = \frac{\int_{C}\qr_{\upsilon}(r_{\circ} \mid t_{\circ}, x_{\circ}, a_{\circ}, \upsilon)dm_{\circ}^{\upsilon}(\upsilon)}{\int_{U}\qr_{\upsilon}(r_{\circ} \mid t_{\circ}, x_{\circ}, a_{\circ}, \upsilon)dm_{\circ}^{\upsilon}(\upsilon)}.
		\]
for almost all $(m_{\circ}^{\upsilon}, r_{\circ}, t_{\circ}, x_{\circ}, a_{\circ}) \in \mathbf{M}^{\upsilon} \times \mathbb{R} \times [0, T] \times \mathbb{R}^{d} \times \mathbf{A}$,  in which $\qr_{\upsilon}(r_{\circ} \mid t_{\circ}, x_{\circ}, a_{\circ}, \upsilon)$ is the conditional density, with respect to $m_{\circ}^{\upsilon}$, of observing a jump of size $r_{\circ}$ knowing $\{\tau_{j} = t_{\circ}, X_{\tau_{j}-}^{z, \phi} = x_{\circ}, \alpha_{i} = a_{\circ}, \upsilon_{0} = \upsilon\}$.
\end{lemma}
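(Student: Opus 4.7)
The plan is to mirror the strategy of the proofs of Lemmas \ref{bayes_poisson_pas_saut} and \ref{bayes_poisson_saut}, using the product structure of the prior $\overline{m} = m^{\lambda} \otimes m^{\gamma} \otimes m^{\upsilon}$ (so that $\lambda_0$, $\gamma_0$, $\upsilon_0$ are mutually independent under $\mathbb{P}_{\overline{m}}$) together with the independence of $N$ and $(\epsilon_i)_{i\ge 1}$. The key observation is that each of the three observation channels carries information about only one unknown parameter: the sequence of jump times reveals $\lambda_0$, each jump size $u_i$ reveals $\gamma_0$, and each realized coupon $r_j^{\phi}$ reveals $\upsilon_0$. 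Hence, between the relevant observation times, the corresponding posterior is constant, and at an observation time, it updates by the elementary Bayes formula attached to the appropriate conditional density.

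For the $\gamma_0$ part, I first show constancy on $\{\zeta_i \le s < \zeta_{i+1}\}$. Fix a bounded Borel test function $\varphi$ on $D([0,T],\mathbb{R}^{d+1})$ and, exactly as in Lemma~\ref{bayes_poisson_pas_saut}, write $\varphi(X^{z,\phi}_{\cdot\wedge s}, \xi^{\phi}_{\cdot\wedge s})\mathbf{1}_{\{\zeta_i\le s<\zeta_{i+1}\}} = \overline{\varphi}(X^{z,\phi}_{\cdot\wedge\zeta_i}, \xi^{\phi}_{\cdot\wedge\zeta_i}, \delta X^i_{\cdot\wedge s}, \delta\xi^i_{\cdot\wedge s})\mathbf{1}_{\{\zeta_i\le s<\zeta_{i+1}\}}$. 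Conditioning first on $\lambda_0$ and on the path $(X^{z,\phi}, \xi^{\phi})$ up to $\zeta_i$, the event $\{\zeta_i\le s<\zeta_{i+1}\}$ and the increments $\delta X^i, \delta\xi^i$ on $[\zeta_i, s]$ depend only on $\lambda_0$ (and on data already observed), since no new jump size is realized on $(\zeta_i, s)$. Independence of $\gamma_0$ from $(\lambda_0, N\text{-jump times}, \upsilon_0, (\epsilon_j)_j)$ then yields $\mathbb{E}_{\overline{m}}[\mathbf{1}_{\gamma_0\in B}\mathbf{1}_{\{\zeta_i\le s<\zeta_{i+1}\}}\varphi] = \mathbb{E}_{\overline{m}}[M^{t,m^\gamma}_{\zeta_i}(B)\mathbf{1}_{\{\zeta_i\le s<\zeta_{i+1}\}}\varphi]$, which is the claimed constancy.

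For the update at $\zeta_i$, I follow Step~1 of Lemma~\ref{bayes_poisson_saut}: approximate $\{\zeta_i=s\}$ by $\{\zeta_{i-1}<s,\zeta_i\in[s,s+h],\zeta_{i+1}>s+h\}$, and for a bounded Borel test function compute
\[
\mathbb{E}_{\overline{m}}\!\left(\mathbf{1}_{\gamma_0\in B}\mathbf{1}_{B_i(\zeta)}\varphi\right)
=\mathbb{E}_{\overline{m}}\!\left(\overline{\varphi}\,\mathbf{1}_{B_i(\zeta)}\!\int_B \qr_\gamma(u_i\mid\gamma)\,dM^{t,m^\gamma}_{\zeta_i-}(\gamma)\Big/\!\int_{U^\gamma}\qr_\gamma(u_i\mid\gamma)\,dM^{t,m^\gamma}_{\zeta_i-}(\gamma)\right),
\]
using the dominating measure $\mu_\circ$ to disintegrate the law of $u_i$ conditional on $\gamma_0=\gamma$. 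Passing $h\downarrow 0$ via dominated convergence (the conditional density $\qr_\gamma(\cdot\mid\gamma)$ being integrable against $\mu_\circ$) and exploiting that the law of $\zeta_i$ is absolutely continuous with respect to Lebesgue measure yields the formula involving $\mathfrak{M}_\gamma$.

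The $\upsilon_0$ part is strictly analogous, with $\tau_j^\phi$ in place of $\zeta_i$ and the observed variable $r_j^\phi = \mathfrak{C}_0(\tau_j^\phi, X^{z,\phi}_{\tau_j^\phi-}, \alpha_j^\phi, \upsilon, \epsilon_j)$ in place of $u_i$. Between $\tau_j^\phi$ and $\tau_{j+1}^\phi$ no new $\epsilon_k$ enters the filtration that carries information about $\upsilon_0$, so constancy follows from the independence of $\upsilon_0$ with $(\lambda_0, \gamma_0, N, (\epsilon_k)_{k\le j})$ and a test-function argument identical to the $\gamma_0$ case. At $\tau_j^\phi$, the conditional density of $r_j^\phi$ with respect to $m^\upsilon_\circ$ given $\{\tau_j^\phi=t_\circ, X^{z,\phi}_{\tau_j^\phi-}=x_\circ, \alpha_j^\phi=a_\circ, \upsilon_0=\upsilon\}$ is $\qr_\upsilon(r\mid t_\circ, x_\circ, a_\circ, \upsilon)$ by assumption, and the elementary Bayes formula gives the ratio $\mathfrak{M}_\upsilon$.

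The main obstacle is the careful bookkeeping of which sub-$\sigma$-algebra carries information about which parameter, since $\mathbb{F}^{z,\overline{m},\phi}$ is generated jointly by $X^{z,\phi}$, $\sum_i r_i^\phi\mathbf{1}_{[\tau_i^\phi,+\infty[}$ and $N$. The independence structure of $(\lambda_0,\gamma_0,\upsilon_0)$ and of $N$ with $(\epsilon_i)$ is precisely what makes the marginalization collapse to the one-dimensional Bayes ratios; handling the approximation $\{\zeta_i\in[s,s+h]\}\to\{\zeta_i=s\}$ (and analogously for $\tau_j^\phi$) in order to identify the value of the posterior at the jump, rather than merely on an interval, is the only delicate analytic step, and it is carried out exactly as in Lemma~\ref{bayes_poisson_saut}.
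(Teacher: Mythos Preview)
Your proposal is correct and follows essentially the same approach as the paper, which simply defers to the proof of Proposition~2.1 in \cite{baradel2016optimal}; that reference carries out exactly the test-function/Bayes-ratio computation you describe, exploiting the product structure of $\overline{m}$ and the independence of the observation channels. Your write-up in fact supplies more detail than the paper itself, and the only minor remark is that for the $\gamma_0$ update the $h\downarrow 0$ approximation is not really needed (the jump size $u_i$ is observed exactly at $\zeta_i$, unlike the intensity case), but this does no harm.
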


\begin{proof}
	Use the same arguments as in the proof of Proposition 2.1 in \cite{baradel2016optimal}.
\end{proof}

\subsection{Parametrization of the set $\mathbf{M}^{\lambda}$}

Here, we have three measures on which will depend the value function. The one associated with the distribution of the jumps of the Poisson measure (parameter $\gamma_0$) and the one from the coupon distribution (parameter $\upsilon$) evolve by a finite number of jumps on each bounded interval. Those will not lead to deal with derivatives on the space of measures and a specific It\^{o} formula nor generator of the diffusion. However, the measure associated with the intensity (parameter $\lambda_0$) evolves continuously. To deal with this, we will assume that the associated space of measures can be linked smoothly to a subset of $\mathbb{R}^{k}$ for some $k \geq 1$.

	\begin{Assumption}\label{m_lambda}
		We assume that there exists an open or compact set $\mathbf{P} \subset \mathbb{R}^{k}$, for some $k \in \mathbb{N}^{*}$, and a function
		\[
			\begin{aligned}
			f : \mathbf{P} &\rightarrow \mathbf{M}_{\lambda} \\
					\theta& \mapsto f(\theta),
			\end{aligned}
		\]
which is a homeomorphism between $\mathbf{P}$ and $\mathbf{M}_{\lambda}$.
	\end{Assumption}

\begin{remark}\label{remark_P}
	The process $P^{t, p}$ defined by:
    \[
			p = f^{-1}(m^{\lambda}), \ \ P_{s}^{t, p} := f^{-1}(M_{s}^{t, m^{\lambda}}), \ \ s \geq t,
	\]
    
    remains, by construction, in $\mathbf{P}$. Moreover, Lemma \ref{bayes_poisson_pas_saut} and \ref{bayes_poisson_saut} provide that $M^{t, m^{\lambda}}$ only depends on the stopping times of the jumps of the random Poisson measure on $[0, t]$, thus, $M^{t, m^{\lambda}}$ is $\mathcal{F}^{N} := s \mapsto \sigma\left(N(u, \cdot), t \leq u \leq s\right)$-adapted. Then, from Assumption \ref{m_lambda}, $P^{t, p}$ is also $\mathcal{F}^{N}$-adapted. Moreover, $P^{t, p}$ does not depend on the size of the jumps.
\end{remark}

\noindent According to Remark \ref{remark_P}, we formulate the following assumption.

\begin{Assumption}\label{meds}
	Let $P^{t, p}$ be the process defined in Remark \ref{remark_P}.

	There exist Lipschitz maps $h_{1} : [0, T] \times \mathbf{P} \rightarrow \mathbb{R}^{k}$ and $h_{2} : [0, T] \times \mathbf{P} \rightarrow \mathbb{R}^{k}$ with linear growth (uniformly  in time) such that
		\[
        	\begin{aligned}
			P^{t, p} &= p + \int_{t}^{\cdot}h_{1}(s, P_{s}^{t, p})ds + \int_{t}^{\cdot}\int_{\mathbb{R}^{d*}}h_{2}(s, P_{s-}^{t, p})N(ds,du) \\
            	&= p + \int_{t}^{\cdot}h_{1}(s, P_{s}^{t, p})ds + \int_{t}^{\cdot}h_{2}(s, P_{s-}^{t, p})dN_{s},
            \end{aligned}
		\]
        
        where we use the notation: $dN_{s} := N(ds, \mathbb{R}^{d*})$.
\end{Assumption}

\noindent We provide two examples in which the Assumptions \ref{m_lambda} and \ref{meds} are fulfilled. The first one uses the conjugate prior on $\lambda_0$ with the Gamma distribution when $\Lambda$ is linear in $\lambda_0$ in our framework. The second one requires a finite discrete space, but with no other assumption, however, the dimension is quickly high.

\begin{example}\label{gamma}
Assume that there exists a c\`{a}dl\`{a}g function $h : [0, T] \mapsto \mathbb{R}^{+}$ such that $\Lambda(t, \lambda) = \lambda h(t)$ for all $t \geq 0, \lambda \in U^{\lambda}$.  Set $m^{\lambda} = M_{t}^{t, m^{\lambda}} := \mathcal{G}(\alpha_{t}, \beta_{t})$, where $\mathcal{G}$ denotes the Gamma distribution. Then, if we define

\[
\left(\alpha, \beta\right) := \left(\alpha_{t} + N - N_{t}, \beta_{t} + \int_{t}^{\cdot}h(u)du\right),
\]

it follows that

	\[
    	M^{t, m^{\lambda}} = \mathcal{G}\left(\alpha, \beta\right),
    \]

and $P^{t, p} = (\alpha, \beta)$ satisfies Assumption \ref{meds}.

\end{example}

\begin{example}\label{bernoulli}
	Assume that 
    	$
        		U^{\lambda} := \{\lambda_{1}, \ldots, \lambda_{k}\} \in (\mathbb{R}_{+}^{*})^{k}.
        $
	Define, for $p = (p_{i})_{1 \leq i \leq k}$ with  $p_{i} > 0, 1 \leq i \leq k$, the distribution $\mathcal{D}(p)$ by:
    \[
    		\mathcal{D}(p) := \frac{\sum_{i=1}^{k}p_{i}\delta_{\lambda_{i}}}{\sum_{i=1}^{k}p_{i}}.
    \]
    
   Set, for  $s \geq t$,
    
    \[
    	P_{s}^{t, p, i} := p^{i}\left[\prod_{j = N_{t}+1}^{N_{s}}\Lambda(\zeta_{j}, \lambda_{i})\right]e^{-\int_{s}^{t}\Lambda(u, \lambda_{i})du}, \ \ 1 \leq i \leq k.
    \]
    
    Then $M^{t, m^{\lambda}} = \mathcal{D}(P^{t, p})$ and the process above satisfies the stochastic differential equation:
    
    \[
    	P^{t, p, i} = p^{i} - \int_{t}^{\cdot}P_{s}^{t, p, i}\Lambda(s, \lambda_{i})ds + \int_{t}^{\cdot}P_{s-}^{t, p, i}[\Lambda(s, \lambda_{i})-1]dN_{s}, \ \ 1 \leq i \leq k,
    \]
    and $P^{t, p} := (P^{t, p, i})_{1 \leq i \leq k}$ satisfies Assumption \ref{meds} under Assumption \ref{h_lambda} since, for each $1 \leq i \leq k$, $t \mapsto \Lambda(t, \lambda_i)$ is bounded on $[0, T]$ (see Remark \ref{localement_Lambda}).
    \end{example}

\subsection{Gain function}

Given $z = (t,x, c, l)  \in \mathbf{Z}$ and $(p, m) \in \mathbf{P} \times \mathbf{M}$, the aim of the controller is to maximize the expected value of the gain functional
	\[
		\phi \in \Phi^{z, \overline{m}} \mapsto G^{z, p, m}(\phi) := g(X_{T}^{z, \phi}, C_{T}^{z, \phi}, L_{T}^{z, \phi}, P_{T}^{t, p}, M_{T}^{z, m, \phi}),
	\]
in which $g$ is a continuous and bounded function on $\mathbb{R}^{d} \times \mathbf{C}\mathbf{L} \times \mathbf{P} \times \mathbf{M}$. 

\medskip

\noindent Given $\phi \in \Phi^{z, \overline{m}}_{\kappa}$, the expected gain is
	\[
		J(z, p, m; \phi) := \mathbb{E}_{\overline{m}}[G^{z, p, m}(\phi)],
	\]
and
	\[
		\vr(z, p, m) := \sup_{\phi \in \Phi^{z,\overline{m}}_{\kappa}} J(z, p, m ; \phi)
	\]
is the corresponding value function. Note that $\vr$ is bounded.

\section{Value function characterization}

For ease of notation, we define $\mathbf{D} := [0, T] \times \mathbb{R}^{d} \times \mathbf{CL} \times \mathbf{P} \times \mathbf{M}$, and for $\mathbf{J} \in \mathcal{P}(\mathbf{K})$, $\mathbf{D}_{\mathbf{J}} := [0, T] \times \mathbb{R}^{d} \times \mathbf{CL}_{\mathbf{J}} \times \mathbf{P} \times \mathbf{M}$. To $\mathbf{J} \in \mathcal{P}(\mathbf{K})$, we denote by $\mathbf{1}_{\mathbf{J}} = (\mathbf{1}_{\mathbf{J}}(j))_{1 \leq j \leq \kappa}$ the vector in $\mathbb{R}^{\kappa}$ in which $\mathbf{1}_{\mathbf{J}}(j) = 1$ if $j \in \mathbf{J}$, 0 else.

\medskip

\noindent For $(z, p, m) \in \mathbf{D}$ and $u \in \mathbb{R}^{d*}$, we introduce the operator $\mathcal{I}$ defined, for all $(z, p, m) \in \mathbf{D}$, by:
\[
	\mathcal{I}[\varphi, u](z, p, m) := \varphi(t, x + \beta(t, x, u) + \mathfrak{F}(z ; u), \mathfrak{C}_{-}(c, l; u), p+h_{2}(t, p), \mathfrak{M}_{\gamma}(m^{\gamma}; u), m^{\upsilon}).
\]
Thus, the Dynkin operator associated with our problem with policies running in indexes $\mathbf{J}$ is:

	\begin{equation*}
		\begin{aligned}
		& \mathcal{L}^{\mathbf{J}}\varphi :=  \partial_{t}\varphi +  \langle\mu + \overline{C}, D\varphi\rangle + \langle \mathbf{1}_{\mathbf{J}}, D_{l}\varphi \rangle + \langle h_{1}, D_{p}\varphi \rangle + \\
		&\int_{\mathbb{R}^{d}}\left[\mathcal{I}[\varphi, u] - \varphi\right]\Lambda(t, \lambda_{0})\Upsilon(\gamma_{0}, du),
		\end{aligned}
	\end{equation*}
in which $D$ is the differentiation operator with respect to $x$, $D_l$ with respect to $l$, and $D_p$ with respect to $p$. Recall that $\Upsilon$ denotes the size distribution of the jumps of the random Poisson measure $N$. Moreover, we introduce:
	\[
		\mathcal{L}_{\star}^{\mathbf{J}}\varphi := \mathbb{E}_{\overline{m}}\left[\mathcal{L}^{\mathbf{J}}\varphi\right],
	\]
    and
    \[
    	\begin{aligned}
        	\mathbf{D}_{\circ} &:= [0, T) \times \mathbb{R}^{d} \times \mathbf{CL}_{\mathbf{J}} \times \mathbf{P} \times \mathbf{M}, \\
            \mathbf{D}_{T} &:= \{T\} \times \mathbb{R}^{d} \times \mathbf{CL} \times \mathbf{P} \times \mathbf{M}.
        \end{aligned}
    \]
Then, we expect that $\vr$ is a viscosity solution of, for each $\mathbf{J} \in \mathcal{P}(\mathbf{K})$ and non-empty $\mathbf{J}' \subset \mathbf{J}$,
		\begin{align}
			\mathbf{1}_{\{\mathbf{J} = \mathbf{K}\}}\left[-\mathcal{L}_{\star}^{\mathbf{K}}\varphi \right]+ \mathbf{1}_{\{\mathbf{J} \not= \mathbf{K}\}}\min\{-\mathcal{L}_{\star}^{\mathbf{J}}\varphi, \varphi - \mathcal{K}\varphi\} = 0 & \ \text{ on } \mathbf{D}_{\circ} \label{interieur} \\
			\varphi = \mathbf{1}_{\{\mathbf{J} = \mathbf{K}\}}g + \mathbf{1}_{\{\mathbf{J} \not= \mathbf{K}\}}\max\left\{\mathcal{K}g, g\right\} & \ \text{ on } \mathbf{D}_{T}\label{bordT} \\
			\lim_{l' \rightarrow \mathfrak{L}_{\mathbf{J}}^{\mathbf{J}'}(l)} \varphi (., c, l', .) = \max\{\varphi (., \mathfrak{C}_{-}^{\ell}[c, \mathfrak{L}_{\mathbf{J}}^{\mathbf{J}'}(l)], .), \mathcal{K}\varphi (., \mathfrak{C}_{-}^{\ell}[c, \mathfrak{L}_{\mathbf{J}}^{\mathbf{J}'}(l)], .)\} & \ \text{ on } \mathbf{D}\backslash\mathbf{D}_{\mathbf{\emptyset}} \label{bordl}
		\end{align}
in which, for $(z, p, m) \in \mathbf{D}_{\mathbf{J}}$ and $\phi^{a} \in \Phi^{z, \overline{m}}$ a control such that $\{\tau_{1}^{\phi^{a}} = t, \alpha_{1}^{\phi^{a}} = a\}$ holds with probability one,
	 \[
		\mathcal{K}\varphi := \sup_{a \in \mathbf{A}}\mathcal{K}^{a}\varphi, \ \ \ \mathcal{K}^{a}\varphi(z, p, m) := \mathbb{E}_{\overline{m}}[\varphi(Z_{t+}^{z,\phi^{a}}, p, M_{t+}^{z, m, \phi^{a}})] ;
	\]
and, for $\mathbf{J}' \subset \mathbf{J}$, 
	\begin{align}
		\mathfrak{L}_{\mathbf{J}}^{\mathbf{J}'} : [0, \ell]^{\mathbf{J}} &\rightarrow [0, \ell]^{\mathbf{J}}  \\
						(l_{j})_{1 \leq j \leq \kappa}	&\mapsto (\ell\mathbf {1}_{\{j \in\mathbf{J}'\}} + l_{j}\mathbf {1}_{\{j \not\in \mathbf{J}'\}})_{1 \leq j \leq k},
	\end{align}
where $[0, \ell]^{\mathbf{J}} := \left\{l \in ([0, \ell] \cup \partial)^{\kappa} : l_{j} \not= \partial \Leftrightarrow  j \in \mathbf{J}\right\}$.
\begin{remark}
	Note that the above corresponds to the definition of a system of PDEs linked by the common boundary conditions.
\end{remark}

\noindent We now define what is a viscosity solution of (\ref{interieur})-(\ref{bordT})-(\ref{bordl}). For $\mathbf{J} \in \mathcal{P}(\mathbf{K})$, we define:
		\[
			 \mathscr{C}^{1}_{\mathbf{J}} := \left\{\varphi : \mathbf{D}_{\mathbf{J}} \mapsto \mathbb{R}, \ \ \varphi \in C^{1,1,(0,1),1, 0}(\mathbf{D}_{\mathbf{J}})\right\}.
		\]

\begin{definition}
	We say that a upper-semicontinuous function $u$ on $\mathbf{D}$ is a viscosity sub-solution of (\ref{interieur})-(\ref{bordT})-(\ref{bordl}) if, for any $\mathbf{J} \in \mathcal{P}(\mathbf{K})$, $(z_{\circ}, p_{\circ}, m_{\circ}) \in \mathbf{D}_{\mathbf{J}}$, and $\varphi \in \mathscr{C}^{1}_{\mathbf{J}}$ such that $\max_{\mathbf{D}_{\mathbf{J}}}(u-\varphi) = (u-\varphi)(z_{\circ}, p_{\circ}, m_{\circ}) = 0$ we have, if $t_{\circ} < T$,
	\[
		\mathbf{1}_{\{\mathbf{J} = \mathbf{K}\}}\left[-\mathcal{L}_{\star}^{\mathbf{K}}\varphi\right] + \mathbf{1}_{\{\mathbf{J} \not= \mathbf{K}\}}\min\{-\mathcal{L}_{\star}^{\mathbf{J}}\varphi, \varphi - \mathcal{K}u)\}(z_{\circ}, p_{\circ}, m_{\circ}) \leq 0,
	\]
if $\mathbf{J} \not= \emptyset$, for any non-empty $\mathbf{J}' \in \mathcal{P}(\mathbf{J})$, with $d_{\circ} = (t_{\circ}, x_{\circ}, c_{\circ}, \mathfrak{L}^{\mathbf{J}'}_{\mathbf{J}}(l_{\circ}), p_{\circ}, m_{\circ})$ and $d_{\circ}' = (t_{\circ}, x_{\circ}, \mathfrak{C}_{-}^{\ell}[c_{\circ}, \mathfrak{L}^{\mathbf{J}'}_{\mathbf{J}}(l_{\circ})], p_{\circ}, m_{\circ})$,
	\[
		\limsup_{(z, p, m) \rightarrow d_{\circ} } u(z, p, m) \leq  \max\left\{u(d_{\circ}'), \mathcal{K}u(d_{\circ}')\right\},
	\]
and, if $t_{\circ} = T$,
	\[
		u(z_{\circ}, p_{\circ}, m_{\circ}) \leq \left\{\mathbf{1}_{\{\mathbf{J} = \mathbf{K}\}}g + \mathbf{1}_{\{\mathbf{J} \not= \mathbf{K}\}}\max(\mathcal{K} g, g)\right\}(x_{\circ}, c_{\circ}, l_{\circ}, p_{\circ}, m_{\circ}).
	\]

	We say that a lower-semicontinuous function $v$ on $\mathbf{D}$ is a viscosity super-solution of (\ref{interieur})-(\ref{bordT})-(\ref{bordl}) if, for any $\mathbf{J} \in \mathcal{P}(\mathbf{K})$, $(z_{\circ}, p_{\circ}, m_{\circ}) \in \mathbf{D}_{\mathbf{J}}$, and $\varphi \in \mathscr{C}^{1}_{\mathbf{J}}$ such that $\min_{\mathbf{D}_{\mathbf{J}}}(v-\varphi) = (v-\varphi)(z_{\circ}, p_{\circ}, m_{\circ}) = 0$ we have, if $t_{\circ} < T$,
	\[
		\mathbf{1}_{\{\mathbf{J} = \mathbf{K}\}}\left[-\mathcal{L}_{\star}^{\mathbf{K}}\varphi\right] + \mathbf{1}_{\{\mathbf{J} \not= \mathbf{K}\}}\min\{-\mathcal{L}_{\star}^{\mathbf{J}}\varphi, \varphi - \mathcal{K}v)\}(z_{\circ}, p_{\circ}, m_{\circ}) \geq 0,
	\]
if $\mathbf{J} \not= \emptyset$, for any non-empty $\mathbf{J}' \in \mathcal{P}(\mathbf{J})$, with $d_{\circ} = (t_{\circ}, x_{\circ}, c_{\circ}, \mathfrak{L}^{\mathbf{J}'}_{\mathbf{J}}(l_{\circ}), p_{\circ}, m_{\circ})$ and $d_{\circ}' = (t_{\circ}, x_{\circ}, \mathfrak{C}_{-}^{\ell}[c_{\circ}, \mathfrak{L}^{\mathbf{J}'}_{\mathbf{J}}(l_{\circ})], p_{\circ}, m_{\circ})$,
	\[
		\liminf_{(z, p, m) \rightarrow d_{\circ}} v(z, p, m) \geq  \max\left\{v(d_{\circ}'), \mathcal{K}v(d_{\circ}')\right\}
	\]
and, if $t_{\circ} = T$,
	\[
		v(z_{\circ}, p_{\circ}, m_{\circ}) \geq \left\{\mathbf{1}_{\{\mathbf{J} = \mathbf{K}\}}g + \mathbf{1}_{\{\mathbf{J} \not= \mathbf{K}\}}\max(\mathcal{K} g, g)\right\}(x_{\circ}, c_{\circ}, l_{\circ}, p_{\circ}, m_{\circ}).
	\]

	We say that a function $u$ is a viscosity solution of (\ref{interieur})-(\ref{bordT})-(\ref{bordl}) if its upper-semicontinuous envelope $u^{*}$ is a viscosity sub-solution and its lower-semicontinuous envelope $u_{*}$ is a viscosity super-solution of (\ref{interieur})-(\ref{bordT})-(\ref{bordl}).

\end{definition}

To ensure that the above operator is continuous, we first assume that:
\begin{Assumption}\label{K_continuity}
		 $\mathcal{K}\varphi$ is upper- (resp. lower-) semicontinuous, for all upper- (resp. lower-) semicontinuous bounded function $\varphi$.
\end{Assumption}

A sufficient condition for Assumption \ref{K_continuity} to hold is provided in \cite{baradel2016optimal}, see the discussion after equation (3.6).

In order to ensure that $\mathcal{L}_{*}^{\mathbf{J}}$ is continuous for all $\mathbf{J} \in \mathcal{P}(\mathbf{K})$, we make the following assumption.

\begin{Assumption}\label{AL_continuity}
	We assume that
    \begin{itemize}
		\item The functions $F$ and $\mathfrak{M}_{\gamma}$ are continuous ;
        \item The stochastic kernel $\gamma \mapsto \Upsilon(\gamma, du)$ is continuous ;
        \item The map $(t, \lambda) \mapsto \Lambda(t, \lambda)$ is continuous.
    \end{itemize}
\end{Assumption}

\begin{lemma}\label{L_continuity}
	Assume that Assumption \ref{AL_continuity} holds. Then, for all $(c, m) \in \mathbf{C} \times \mathbf{M}$, with $\mathbf{J} := \{j \in \mathbf{K} : c_{j} \not= \partial\}$, and for all bounded function $\varphi \in \mathscr{C}^{1}_{\mathbf{J}}$, the operator $\mathcal{L}_{\star}^{\mathbf{J}}\varphi$ is continuous.
\end{lemma}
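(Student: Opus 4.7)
The plan is to split $\mathcal{L}^{\mathbf{J}}\varphi(z, p, m)$ into a deterministic first-order differential part
\begin{equation*}
\mathcal{L}_{\mathrm{det}}\varphi := \partial_t\varphi + \Bigl\langle\mu + \sum_{j=1}^{\kappa}\mathbf{1}_{\mathbf{J}}(j)\overline{C}(t, c^j),\, D\varphi\Bigr\rangle + \langle \mathbf{1}_{\mathbf{J}}, D_l\varphi\rangle + \langle h_1, D_p\varphi\rangle
\end{equation*}
and the jump integrand $\int_{\mathbb{R}^{d}}[\mathcal{I}[\varphi, u] - \varphi]\,\Lambda(t, \lambda_0)\,\Upsilon(\gamma_0, du)$. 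The first piece is independent of $(\lambda_0, \gamma_0)$, hence coincides with its $\mathbb{E}_{\overline{m}}$-expectation, and is continuous on $\mathbf{D}_{\mathbf{J}}$: the derivatives $\partial_t\varphi, D\varphi, D_l\varphi, D_p\varphi$ are continuous because $\varphi \in \mathscr{C}^1_{\mathbf{J}}$, while $\mu$ and $\overline{C}$ are continuous by the standing Assumption on $(\mu, \beta, \overline{C}, H, F)$ and $h_1$ by Assumption \ref{meds}.

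For the jump piece, since under $\mathbb{P}_{\overline{m}}$ one has $\lambda_0 \sim f(p)$ and $\gamma_0 \sim m^\gamma$, taking $\mathbb{E}_{\overline{m}}$ yields the iterated integral
\begin{equation*}
I(z, p, m) := \int_{U^\lambda}\!\!\int_{U^\gamma}\!\!\int_{\mathbb{R}^d}\!\!\bigl[\mathcal{I}[\varphi, u](z,p,m) - \varphi(z, p, m)\bigr]\,\Lambda(t, \lambda)\,\Upsilon(\gamma, du)\,m^\gamma(d\gamma)\,f(p)(d\lambda).
\end{equation*}
Boundedness of $\varphi$ gives $|\mathcal{I}[\varphi, u] - \varphi| \leq 2\|\varphi\|_\infty$, and Lemma \ref{bayes_L1} ensures integrability of $\Lambda(t, \cdot)$ against $f(p)$, so the integral converges absolutely.

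To prove continuity of $I$, I would chain three arguments. First, for fixed $(\lambda, \gamma)$ and for $u$ outside the null set $\bigcup_k \partial A_k$, the map $(z, p, m) \mapsto \mathcal{I}[\varphi, u](z, p, m) - \varphi(z, p, m)$ is continuous: the arguments fed into $\varphi$ inside $\mathcal{I}$ vary continuously in $(z, p, m)$ via $\beta$ (standing Assumption), $F$ inside $\mathfrak{F}$ and $\mathfrak{M}_\gamma$ (Assumption \ref{AL_continuity}), $h_2$ (Assumption \ref{meds}), and the discrete selector $\mathcal{J}(c; u)$ is locally constant in $c$ once $u$ is strictly interior to or exterior of each $A_k$. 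Dominated convergence against the probability $\Upsilon(\gamma, \cdot)$ then gives continuity of the inner $u$-integral. Second, continuity in $\gamma$ of that $u$-integral follows from weak continuity of the kernel $\gamma \mapsto \Upsilon(\gamma, \cdot)$ (Assumption \ref{AL_continuity}) via the Portmanteau theorem, since the integrand is bounded and continuous $\Upsilon$-a.e. Third, continuity in $(t, p, m^\gamma)$ follows from continuity of $(t, \lambda) \mapsto \Lambda(t, \lambda)$ (Assumption \ref{AL_continuity}), from weak continuity of $p \mapsto f(p)$ (Assumption \ref{m_lambda}), and from weak continuity on $\mathbf{M}^\gamma$, applied via Portmanteau to the bounded continuous integrand produced in the previous two steps.

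The main obstacle lies in the first step: $u \mapsto \mathcal{I}[\varphi, u]$ is genuinely discontinuous whenever $u$ crosses a layer boundary $\partial A_{k_j}$, because $\mathfrak{C}_-(c, l; u)$ abruptly drops an index and $\mathfrak{F}$ acquires a summand; likewise $\mathcal{J}(c; u)$ could jump as $c$ varies unless the map $k \mapsto A_k$ behaves well. Both difficulties dissolve under the natural modelling condition that $\Upsilon(\gamma, \partial A_k) = 0$ for every active $k$ (layers in the CAT-bond framework take the form $[a, a+b]\times\mathbb{R}^{d-1}$ and claim-size distributions are diffuse on their boundaries). Under this non-atomicity, the three convergences chain together, giving continuity of $I$, and hence of $\mathcal{L}^{\mathbf{J}}_\star\varphi = \mathcal{L}_{\mathrm{det}}\varphi + I$ on $\mathbf{D}_{\mathbf{J}}$.
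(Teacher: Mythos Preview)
Your approach is essentially the paper's: split $\mathcal{L}_\star^{\mathbf{J}}\varphi$ into the first-order differential piece (handled by direct continuity of the coefficients and of the derivatives of $\varphi$) and the jump integral (handled by iterated weak-continuity arguments on the kernels). The paper makes one structural simplification you do not: it first factors the jump term as
\[
\overline{\Lambda}(t,p)\int_{U^{\gamma}}\int_{\mathbb{R}^{d}}\bigl[\mathcal{I}[\varphi,u]-\varphi\bigr]\Upsilon(\gamma,du)\,dm^{\gamma}(\gamma),
\qquad
\overline{\Lambda}(t,p):=\int_{U^{\lambda}}\Lambda(t,\lambda)\,f(p)(d\lambda),
\]
so the $\lambda$-integration is disposed of in one stroke via continuity of $\overline{\Lambda}$, and only the $\gamma$--$u$ double integral remains; where you invoke Portmanteau/dominated convergence, the paper cites Proposition~7.30 of Bertsekas--Shreve twice. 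Your observation about the discontinuity of $u\mapsto\mathcal{I}[\varphi,u]$ across the layer boundaries $\partial A_{k_j}$ is a genuine subtlety: the paper's phrase ``continuous by definition'' glosses over exactly this point, and your non-atomicity condition $\Upsilon(\gamma,\partial A_k)=0$ is the natural hypothesis that makes both arguments rigorous.
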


\begin{proof} Let $(c, m) \in \mathbf{C} \times \mathbf{M}$ and $\mathbf{J}$ defined as above. Recall that
    	\[
		\begin{aligned}
		 \mathcal{L}_{\star}^{\mathbf{J}}\varphi = &\; \partial_{t}\varphi +  \langle \mu + \overline{C}, D\varphi\rangle + \langle  \mathbf{1}_{\mathbf{J}}, D_{l}\varphi \rangle + \langle  h_{1}, D_{p}\varphi \rangle  \\
		&+ \mathbb{E}_{\overline{m}}\left[\int_{\mathbb{R}^{d}}\left[\mathcal{I}[\varphi, u] - \varphi\right]\Lambda(t, \lambda_{0})\Upsilon(\gamma_{0}, du)\right].
		\end{aligned}
        \]
        
        For the first line above, since all involved functions are continuous, the operator is continuous. For the second line, since $\varphi$ is bounded, one easily checks that the expected value with respect to $(\lambda, \gamma)$ is well defined and one can apply Fubini's theorem. This is rewritten:
        
        \[
        		\overline{\Lambda}(t, p)\int_{U^{\gamma}}\left[\int_{\mathbb{R}^{d}}\left[\mathcal{I}[\varphi, u] - \varphi\right]\Upsilon(\gamma, du)\right]dm^{\gamma}(\gamma)
        \]
        with $\overline{\Lambda}(t, p) := \int_{U^{\lambda}}\Lambda(t,  \lambda)dm^{\lambda}(\lambda)$ which is continuous, see \cite[Proposition 7.30 p145]{dimitri1996stochastic}.
        
        Now, remark that the function integrated through $\Upsilon(\gamma, du)$ with $\gamma \in U^{\gamma}$ fixed is continuous by definition. Since the stochastic kernel $\gamma \mapsto \Upsilon(\gamma, du)$ is assumed to be continuous,  we get again from \cite[Proposition 7.30 p145]{dimitri1996stochastic} that the function integrated through $m^{\gamma}$ is continuous and bounded. And then, the operator is continuous.
\end{proof}

\noindent We now assume that we have a comparison principle. A sufficient condition is provided in Proposition \ref{cat_comparaison} below.

\begin{Assumption} \label{hypcomparaison} Let $U$ (resp. $V$) be a upper- (resp. lower-) semicontinuous bounded viscosity sub- (resp. super-) solution of (\ref{interieur})-(\ref{bordT})-(\ref{bordl}). Assume further that  $U \leq V$ on $\mathbf{D}_{T}$. Then, $U \leq V$ on $\mathbf{D}$.

\begin{theorem}\label{viscosity}
	The function $\vr$ is the unique viscosity solution of (\ref{interieur})-(\ref{bordT})-(\ref{bordl}).
\end{theorem}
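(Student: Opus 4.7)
The plan is to follow the classical three-step viscosity-solution program (dynamic programming principle, sub/super-solution property, uniqueness) but adapted to the present impulse/jump framework with Bayesian state $(P,M)$. The strategy is modeled on the one used in \cite{baradel2016optimal}, with the extra machinery needed here being the handling of the running-policies state $(C,L)$ and of the Poisson-driven continuous evolution of $M^{\lambda}$.

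\textbf{Step 1: Dynamic Programming Principle.} The first task is to prove a (weak) DPP of the form
\[
\vr(z,p,m) = \sup_{\phi \in \Phi^{z,\overline{m}}_{\kappa}} \mathbb{E}_{\overline{m}}\bigl[ \vr^{*}\bigl(\theta, X^{z,\phi}_{\theta}, C^{z,\phi}_{\theta}, L^{z,\phi}_{\theta}, P^{t,p}_{\theta}, M^{z,m,\phi}_{\theta}\bigr)\bigr]
\]
for any $\mathbb{F}^{z,\overline{m},\phi}$-stopping time $\theta \in [t,T]$, together with the reverse inequality involving $\vr_{*}$. The key identification one needs is that, conditionally on $\mathcal{F}^{z,\overline{m},\phi}_{\theta}$, the posterior $M^{z,m,\phi}_{\theta}$ is the correct Bayesian state (this follows from Lemmas~\ref{bayes_poisson_pas_saut}, \ref{bayes_poisson_saut} and the analogous updates for $\gamma_0,\upsilon_0$), and that the process $(X,C,L,P,M)$ restarted at $\theta$ has the same law as the one starting from these values. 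This gives the flow/Markov property needed for the DPP, and measurable selection arguments produce the inequalities.

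\textbf{Step 2: Viscosity sub- and super-solution property.} Given the DPP, standard arguments yield that $\vr^{*}$ (resp.\ $\vr_{*}$) is a sub- (resp.\ super-) solution of (\ref{interieur})-(\ref{bordT})-(\ref{bordl}). Fix $\mathbf{J}\in\mathcal{P}(\mathbf{K})$, a test function $\varphi\in\mathscr{C}^{1}_{\mathbf{J}}$, and a contact point $(z_\circ,p_\circ,m_\circ)\in\mathbf{D}_{\mathbf{J}}$ with $t_\circ<T$. For the sub-solution inequality, one considers two types of controls: (a)~the ``do nothing on $[t_\circ,t_\circ+h]$'' strategy which, combined with Itô's formula applied between jumps (using the decomposition of $(X,C,L,P)$ from (\ref{X}) and Assumption~\ref{meds}) and the Bayesian jumps of $M$, produces the inequality $-\mathcal{L}^{\mathbf{J}}_{\star}\varphi(z_\circ,p_\circ,m_\circ)\le 0$ after dividing by $h$ and sending $h\downarrow 0$; (b)~when $\mathbf{J}\ne\mathbf{K}$, a strategy that issues immediately one new CAT bond with action $a\in\mathbf{A}$, which yields $\varphi-\mathcal{K}\vr^{*}\le 0$. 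The super-solution inequality is obtained symmetrically, using the ``negative'' half of the DPP; the obstacle inequality $\varphi\ge \mathcal{K}\vr_{*}$ is immediate from the definition of $\vr$ (issuing now is an admissible action). Continuity of $\mathcal{L}^{\mathbf{J}}_{\star}\varphi$ is guaranteed by Lemma~\ref{L_continuity} and continuity of $\mathcal{K}$ on semicontinuous functions by Assumption~\ref{K_continuity}, so passing to the limits in the test-function arguments is legitimate.

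\textbf{Step 3: Boundary conditions.} At $t=T$, no further control can be exercised unless $\mathbf{J}\ne\mathbf{K}$, in which case issuing one last CAT bond is still admissible; a direct computation from the definition of $\vr$ gives (\ref{bordT}) in both the sub- and super-solution senses. Condition (\ref{bordl}) at $l_j\uparrow\ell$ is the more delicate one: at the moment a CAT bond reaches maturity, the state $(C,L)$ is forced to jump via $\mathfrak{C}_{-}^{\ell}$, and simultaneously one may either do nothing or issue a new bond. The limit identity is obtained by considering the Skorokhod-type limit $l'\to\mathfrak{L}^{\mathbf{J}'}_{\mathbf{J}}(l)$ and applying the DPP on the stopping time equal to the first maturity among the indices in $\mathbf{J}'$, combined with the freedom to act immediately after.

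\textbf{Step 4: Uniqueness.} Once $\vr^{*}$ is a sub-solution and $\vr_{*}$ a super-solution, with $\vr^{*}=\vr_{*}$ on $\mathbf{D}_{T}$ by Step 3, the comparison principle in Assumption~\ref{hypcomparaison} yields $\vr^{*}\le\vr_{*}$ on $\mathbf{D}$. Combined with the trivial inequality $\vr_{*}\le\vr^{*}$, this proves $\vr$ is continuous and that it is the unique viscosity solution.

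The hardest step is Step 1 together with the boundary analysis (\ref{bordl}) in Step 3: one needs to justify the DPP despite the simultaneous continuous and jump evolution of the Bayesian state $M$, and to handle carefully the ``stopping at maturity, then restarting'' structure encoded in $\mathfrak{L}^{\mathbf{J}'}_{\mathbf{J}}$ and $\mathfrak{C}_{-}^{\ell}$. The remaining ingredients (Itô between jumps, measurable selection, standard viscosity arguments) are technical but routine given the continuity assumptions gathered so far.
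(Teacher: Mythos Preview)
Your outline follows the ``classical'' viscosity route (full DPP $\Rightarrow$ $\vr^{*}$ sub-solution and $\vr_{*}$ super-solution $\Rightarrow$ comparison), but this is \emph{not} what the paper does, and the deviation is exactly at the point you treat most lightly. The paper explicitly flags that the filtration $\mathbb{F}^{z,\overline{m},\phi}$ depends on the initial data \emph{and} on the control, which obstructs the concatenation-of-controls step needed for the ``$\geq$'' half of your DPP in Step~1. Your sentence ``measurable selection arguments produce the inequalities'' is precisely where the difficulty lives: to restart optimally from a random state $(Z_\theta,P_\theta,M_\theta)$ one must patch together controls that are adapted to filtrations which themselves change with the starting point, and this is not covered by standard measurable selection.

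The paper circumvents this by \emph{not} proving the super-solution property for $\vr_{*}$ at all. Instead it introduces discrete-time value functions $\vr_{n}$ (controls restricted to a dyadic grid), for which a genuine DPP on grid times is available (Proposition~\ref{PPDsur}), and shows that the relaxed lower limit $\vr_{\circ}:=\liminf_{(z',p',m',n)\to(\cdot,\infty)}\vr_{n}$ is a viscosity super-solution. The comparison (Assumption~\ref{hypcomparaison}) is then applied to the pair $(\vr^{*},\vr_{\circ})$, and the argument closes via the sandwich $\vr_{\circ}\le \vr\le \vr^{*}$ (the left inequality holding because $\Phi^{z,\overline{m}}_{n}\subset\Phi^{z,\overline{m}}_{\kappa}$). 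Your Step~4 invokes comparison between $\vr^{*}$ and $\vr_{*}$; the paper never establishes that $\vr_{*}$ is a super-solution, and your proposal does not supply a mechanism to do so either. If you want to keep your direct route, you must explain concretely how to perform the control-concatenation with state-dependent filtrations; otherwise, the discrete-approximation detour is the missing idea.
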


\end{Assumption}

\section{Viscosity solution properties}

This part is dedicated to the proof of the viscosity solution characterization of Theorem \ref{viscosity}. We start with the sub-solution property and continue with the super-solution property. The main difficulty relies on the fact that the filtration depends on the initial data. The results can be obtained along the lines of \cite{baradel2016optimal}. 

\subsection{Sub-solution property}

\begin{proposition}\label{subviscosity}
	The function $\vr$ is a viscosity sub-solution of (\ref{interieur})-(\ref{bordT})-(\ref{bordl}).
\end{proposition}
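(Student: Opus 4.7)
The plan is to derive a weak dynamic programming principle (DPP) for $\vr$ along the lines of \cite{baradel2016optimal} and then exploit it against smooth test functions stratum by stratum. Fix $\mathbf{J} \in \mathcal{P}(\mathbf{K})$, a point $(z_\circ, p_\circ, m_\circ) \in \mathbf{D}_\mathbf{J}$, and $\varphi \in \mathscr{C}^1_\mathbf{J}$ with $\max_{\mathbf{D}_\mathbf{J}}(\vr^* - \varphi) = (\vr^* - \varphi)(z_\circ, p_\circ, m_\circ) = 0$. By definition of the upper-semicontinuous envelope, I would select a sequence $(z_n, p_n, m_n) \to (z_\circ, p_\circ, m_\circ)$ inside $\mathbf{D}_\mathbf{J}$ such that $\vr(z_n, p_n, m_n) \to \vr^*(z_\circ, p_\circ, m_\circ)$, and denote by $\overline{m}_n$ the prior associated with $(p_n, m_n)$ through $f$. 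All three sub-solution inequalities will be verified along this sequence.

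For the interior PDE with $t_\circ < T$, argue by contradiction: suppose simultaneously $-\mathcal{L}_\star^\mathbf{J}\varphi(z_\circ, p_\circ, m_\circ) > 3\eta$ for some $\eta > 0$ and, in the case $\mathbf{J} \not= \mathbf{K}$, $\vr^*(z_\circ, p_\circ, m_\circ) - \mathcal{K}\vr^*(z_\circ, p_\circ, m_\circ) > 3\eta$ (the second condition being vacuous when $\mathbf{J} = \mathbf{K}$, since (\ref{N_cond}) then forbids new issuances). Lemma \ref{L_continuity} gives continuity of $\mathcal{L}_\star^\mathbf{J}\varphi$, while Assumption \ref{K_continuity} ensures upper-semicontinuity of $\mathcal{K}\vr^*$; both strict inequalities then persist on a neighborhood $\mathcal{V}$ of the accumulation point, after a standard perturbation of $\varphi$ retaining $\varphi \geq \vr^*$ globally and $\varphi \geq \vr^* + \eta$ outside $\mathcal{V}$. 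Let $\theta_n$ be the first exit time from $\mathcal{V}$ of the joint process, capped by $t_n + h$, and let $\phi_n \in \Phi_\kappa^{z_n,\overline{m}_n}$ refrain from any intervention on $[t_n, \theta_n]$. Applying Itô's formula to $\varphi$ between $t_n$ and $\theta_n$ under $\mathbb{P}_{\overline{m}_n}$, using Fubini to identify $\mathbb{E}_{\overline{m}_n}[\mathcal{L}^\mathbf{J}\varphi] = \mathcal{L}_\star^\mathbf{J}\varphi$, yields
\[
\varphi(z_n, p_n, m_n) \geq \mathbb{E}_{\overline{m}_n}\bigl[\varphi(Z^{z_n,\phi_n}_{\theta_n}, P^{t_n,p_n}_{\theta_n}, M^{z_n,m_n,\phi_n}_{\theta_n})\bigr] + \eta\,\mathbb{E}_{\overline{m}_n}[\theta_n - t_n].
\]
Combining with $\varphi \geq \vr^*$ and the weak sub-optimality inequality
\[
\vr(z_n, p_n, m_n) \leq \mathbb{E}_{\overline{m}_n}\bigl[\vr^*(Z^{z_n,\phi_n}_{\theta_n}, P^{t_n,p_n}_{\theta_n}, M^{z_n,m_n,\phi_n}_{\theta_n})\bigr],
\]
available thanks to the absence of intervention on $[t_n,\theta_n]$ together with the test-function DPP of \cite{baradel2016optimal}, contradicts $\vr(z_n, p_n, m_n) \to \varphi(z_\circ, p_\circ, m_\circ)$ once $\mathbb{E}_{\overline{m}_n}[\theta_n - t_n]$ is bounded below, a bound that follows from Assumption \ref{h_lambda} and a standard stopping-time estimate for the Poisson component.

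The terminal condition at $t_\circ = T$ follows from continuity of $g$, upper-semicontinuity of $\mathcal{K} g$ under Assumption \ref{K_continuity}, and the pointwise bound $\vr(T,\cdot) \leq \max(g, \mathcal{K} g)$ (the agent either stops immediately or issues one last bond at the horizon). For the $l$-boundary inequality, observe that when $l \to \mathfrak{L}^{\mathbf{J}'}_\mathbf{J}(l_\circ)$ the coordinates $l_j$ with $j \in \mathbf{J}'$ saturate at $\ell$, forcing termination of the corresponding CAT bonds through $\mathfrak{C}_-^\ell$; any admissible control at nearby points is then asymptotically equivalent to either doing nothing (yielding $\vr^*$ at the updated configuration) or issuing a new bond immediately (yielding $\mathcal{K}\vr^*$ at the updated configuration), and Assumption \ref{K_continuity} delivers the limsup bound.

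The main obstacle is the weak DPP itself. Because the filtration $\mathbb{F}^{z,\overline{m},\phi}$ genuinely depends on the initial datum, the usual measurable-selection route to a strong DPP is not available, and the sub-optimality inequality must instead be obtained in the test-function form of \cite{baradel2016optimal}, carefully adapted to incorporate the continuous Bayesian update of $M^{\lambda}$ driven by $N$ through Assumption \ref{meds} and the additional Bayesian jumps of $M^{\gamma}$ and $M^{\upsilon}$. Once this is in place, the contradiction argument and the two boundary analyses are routine, modulo the bookkeeping required by the stratification of $\mathbf{D}$ into the strata $\mathbf{D}_\mathbf{J}$ indexed by $\mathbf{J} \in \mathcal{P}(\mathbf{K})$.
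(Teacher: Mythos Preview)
Your overall strategy matches the paper's: establish a weak dynamic programming inequality in the spirit of \cite{baradel2016optimal} (the paper records it as Proposition \ref{PPDsous}) and then run the standard contradiction/It\^{o} argument stratum by stratum, with the $l$-boundary treated like the $T$-boundary. The continuity ingredients you invoke (Lemma \ref{L_continuity}, Assumption \ref{K_continuity}) are exactly the ones the paper relies on.

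There is, however, a genuine slip in the DPP step. You display
\[
\vr(z_n, p_n, m_n) \leq \mathbb{E}_{\overline{m}_n}\bigl[\vr^*(Z^{z_n,\phi_n}_{\theta_n}, P^{t_n,p_n}_{\theta_n}, M^{z_n,m_n,\phi_n}_{\theta_n})\bigr]
\]
for a \emph{specific} no-intervention control $\phi_n$. But $\vr$ is a supremum over admissible controls, so no such inequality is available for a fixed $\phi_n$; the direction is wrong. The correct weak DPP (Proposition \ref{PPDsous}) carries a supremum over $\phi \in \Phi^{z,\overline{m}}_{\geq t}$ and splits the expectation according to $\{\theta < \tau_1^\phi\}$ and $\{\theta \geq \tau_1^\phi\}$, the second piece being $\mathcal{K}^{\alpha_1^\phi}\vr^*(Z_{\tau_1^\phi-},\ldots)$. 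The contradiction is then obtained by bounding \emph{both} pieces uniformly in $\phi$: on $\{\theta < \tau_1^\phi\}$ the process is uncontrolled up to $\theta$, so It\^{o} on $\varphi$ together with $-\mathcal{L}_\star^{\mathbf{J}}\varphi > \eta$ yields the gain; on $\{\theta \geq \tau_1^\phi\}$ one has $Z_{\tau_1^\phi-}$ still in the neighborhood (since $\tau_1^\phi \leq \theta$), and your second contradiction hypothesis $\varphi - \mathcal{K}\vr^* > \eta$ absorbs the intervention term. In other words, the assumption $\vr^* - \mathcal{K}\vr^* > 3\eta$ is not there to let you restrict to a single $\phi_n$ a priori; it is there to control the intervention branch inside the supremum. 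Once this is rewritten, the remainder of your sketch goes through and coincides with the paper's argument.
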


The proof of this proposition, as usual, relies on a dynamic programming principle. For this part, the dependency of the filtration on the initial data in not problematic as it only requires a conditioning argument. We have the following result:
\begin{proposition}\label{PPDsous}
	Fix $\mathbf{J} \in \mathcal{P}(\mathbf{K})$ and $(z, p, m) \in \mathbf{D}_{\mathbf{J}}$, and let $\theta$ be the first exit time of $(Z^{z, \phi^{0}}, P^{t, p})$ from a Borel set $B \subset \mathbf{D}_{\mathbf{J}}$ containing $(z, p, m)$ where $\phi^{0} \in \Phi^{z, m}$ is a control such that $\tau_{1}^{\phi^{0}} > t$. Then,
    \begin{equation}
    	\vr(z, p, m) \leq \sup_{\phi \in \Phi^{z, m}_{\geq t}} \mathbb{E}_{m}\left[\vr^{*}(Z_{\theta}^{z, \phi}, P_{\theta}^{t, p}, m)\mathbf{1}_{\{\theta < \tau_{1}^{\phi}\}} + \mathcal{K}^{\alpha_{1}^{\phi}}\vr^{*}(Z_{\tau_{1}^{\phi}-}^{z, \phi}, P_{\tau_{1}^{\phi}-}^{t, p}, m)\mathbf{1}_{\{\theta \geq \tau_{1}^{\phi}\}}	\right]
    \end{equation}
    in which $z := (t,x, c, l)$, $\Phi^{z, \overline{m}}_{\geq t} := \{\phi \in \Phi^{z, \overline{m}}_{\kappa} : \tau_{1}^{\phi} \geq t\}$.
\end{proposition}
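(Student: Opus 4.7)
The plan is to fix an arbitrary admissible control $\phi \in \Phi^{z, \overline{m}}_{\kappa}$ and to establish the upper bound
\[
J(z, p, m; \phi) \leq \mathbb{E}_{\overline{m}}\!\left[\vr^{*}(Z_{\theta}^{z, \phi}, P_{\theta}^{t, p}, m)\mathbf{1}_{\{\theta < \tau_{1}^{\phi}\}} + \mathcal{K}^{\alpha_{1}^{\phi}}\vr^{*}(Z_{\tau_{1}^{\phi}-}^{z, \phi}, P_{\tau_{1}^{\phi}-}^{t, p}, m)\mathbf{1}_{\{\theta \geq \tau_{1}^{\phi}\}}\right],
\]
from which the announced inequality will follow by taking the supremum over $\phi$. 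The natural split is provided by the partition $\Omega = \{\theta < \tau_{1}^{\phi}\} \cup \{\theta \geq \tau_{1}^{\phi}\}$, which generates exactly the two terms on the right-hand side.

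On the event $\{\theta < \tau_{1}^{\phi}\}$, the control is inactive on $[t, \theta]$, so $(Z^{z, \phi}, P^{t, p}, M^{z, m, \phi})$ follows the uncontrolled flow on that interval. I would condition on $\mathcal{F}^{z, \overline{m}, \phi}_{\theta}$ and invoke the flow property of the state process: the gain collected after $\theta$ is exactly that of a fresh problem started at $(Z_{\theta}^{z, \phi}, P_{\theta}^{t, p}, M_{\theta}^{z, m, \phi})$ under a shifted admissible control, and is therefore pointwise bounded by $\vr \leq \vr^{*}$ evaluated at this random datum. A crucial observation is that, since $B \subset \mathbf{D}_{\mathbf{J}}$ is a neighbourhood of $(z, p, m)$ and any Poisson jump simultaneously moves $X$ (through $\beta$) and $P$ (through $h_{2}$), while no impulse can occur before $\tau_{1}^{\phi}$, both $M^{\gamma}$ and $M^{\upsilon}$ must still coincide with $m^{\gamma}$ and $m^{\upsilon}$ up to $\theta$ on this event, which explains the appearance of $m$ (rather than $M_{\theta}$) in the first summand. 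On $\{\theta \geq \tau_{1}^{\phi}\}$, I would instead condition on $\mathcal{F}^{z, \overline{m}, \phi}_{\tau_{1}^{\phi}-}$ so as to freeze the impulse time and action $\alpha_{1}^{\phi}$; by the very definition of $\mathcal{K}^{a}$, the resulting conditional continuation value equals $\mathcal{K}^{\alpha_{1}^{\phi}}\vr^{*}(Z_{\tau_{1}^{\phi}-}^{z, \phi}, P_{\tau_{1}^{\phi}-}^{t, p}, m)$, again using the invariance of $(M^{\gamma}, M^{\upsilon})$ on $[t, \tau_{1}^{\phi})$.

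The hard part will be making this conditioning rigorous, since the filtration $\mathbb{F}^{z, \overline{m}, \phi}$ depends on the control $\phi$, which precludes a direct use of a regular conditional probability with respect to a fixed reference filtration. I would follow the strategy of \cite{baradel2016optimal}: working on the canonical product space $\Omega$, I would express the conditional law in terms of shifted noises whose joint measurability in the conditioning variable is explicit, and then reconstruct admissible controls on the conditional slices. Because $\vr^{*}$ is only upper semi-continuous, the pointwise bound $\vr \leq \vr^{*}$ is not by itself enough to provide an integrable majorant; to handle this I would cover the range of $(Z_{\theta}^{z, \phi}, P_{\theta}^{t, p})$ by countably many small balls on which $\vr^{*}$ is $\varepsilon$-close from above to a continuous function, pick near-optimal controls on each ball by measurable selection, and glue them together through a standard disjointification/Lindel\"of argument into a single admissible control whose conditional expected gain is within $\varepsilon$ of $\vr^{*}$. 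Sending $\varepsilon \to 0$, adding the two contributions, and taking the supremum over $\phi$ then yields the announced inequality.
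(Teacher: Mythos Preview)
Your overall strategy---fix an arbitrary $\phi$, split along $\{\theta<\tau_1^\phi\}\cup\{\theta\ge\tau_1^\phi\}$, condition on $\mathcal{F}^{z,\overline m,\phi}_{\theta}$ (resp.\ $\mathcal{F}^{z,\overline m,\phi}_{\tau_1^\phi-}$), bound the continuation by $\vr\le\vr^*$, and take the supremum---is exactly the conditioning argument the paper has in mind (it defers to Proposition~4.2 of \cite{baradel2016optimal}, and the text explicitly says that for this inequality ``the dependency of the filtration on the initial data is not problematic as it only requires a conditioning argument''). So the first two paragraphs of your sketch are on target.

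The last paragraph, however, imports machinery from the \emph{wrong} half of the weak DPP. The Lindel\"of covering, the $\varepsilon$-nets, the measurable selection of near-optimal controls and their gluing are the devices used to prove the \emph{lower} bound $\vr\ge\mathbb{E}_{\overline m}[\vr_*(\cdots)]$ (the super-solution side, handled in the paper via the discrete approximation of Proposition~\ref{PPDsur}). For the present upper bound nothing of the sort is needed: you are not constructing a control, you are bounding the payoff of a \emph{given} $\phi$. Once you have shown that the shifted tail of $\phi$ is admissible for the problem restarted at $(Z^{z,\phi}_\theta,P^{t,p}_\theta,m)$ (this is the only place the canonical product-space description of $\Omega$ enters), the pointwise inequality $J(\text{shifted }\phi)\le \vr\le \vr^*$ is immediate, and integrability is a non-issue because $g$, hence $\vr$ and $\vr^*$, is bounded. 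Your sentence ``the pointwise bound $\vr\le\vr^*$ is not by itself enough to provide an integrable majorant'' is therefore mistaken, and the selection/gluing step should be dropped entirely from this proof.

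One minor caveat on your justification that $M^{\gamma}$ stays at $m^{\gamma}$ on $\{\theta<\tau_1^\phi\}$: the fact that a Poisson jump moves $X$ and $P$ does not by itself force exit from a generic Borel $B\subset\mathbf{D}_{\mathbf J}$. In the way the proposition is actually used (small neighbourhoods in the sub-solution proof), either the jump changes the index set $\mathbf J$ or one restricts $\theta$ by the first jump time; you should make that choice explicit rather than leave it implicit.
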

\begin{proof}
	It suffices to follow the arguments of Proposition 4.2 in \cite{baradel2016optimal}.
\end{proof}

\noindent We now prove Proposition \ref{subviscosity}.

\begin{proof} 

 Since, for each $\mathbf{J} \in \mathcal{P}(\mathbf{K})$, the operator $\mathcal{L}_{\star}^{\mathbf{J}}$ is continuous, the proof of $(\ref{interieur})$ and $(\ref{bordT})$ can be obtained by using the same arguments as in Proposition 4.1 in \cite{baradel2016optimal}.
 
 To prove (\ref{bordl}), one can use the same arguments used in order to prove (\ref{bordT}).
\end{proof}

\subsection{Super-solution property}

Because of the non-trivial dependence of the filtration $\mathbb{F}^{z, m, \phi}$ with respect to the initial data, in order to prove the super-solution property associated with Theorem \ref{viscosity}, we shall use a discrete version of our impulse control problem, as in \cite{baradel2016optimal}. We shall show that the limit problem is a super-solution of (\ref{interieur})-(\ref{bordT})-(\ref{bordl}). Proposition \ref{subviscosity} and the comparison assumption will show that the limit problem is $\vr$.

We shall use a dynamic programing principle in some discrete form defined below.

\begin{proposition}\label{PPDsur}
	Fix $\mathbf{J} \in \mathcal{P}(\mathbf{K})$ and $(z, p, m) \in \mathbf{D}_{\mathbf{J}}$. Let $\Phi_{n}^{z, \overline{m}}$ be the subset of elements of $\Phi^{z, \overline{m}}_{\kappa}$ such that the stopping times $\tau_{i}^{\phi}, i \geq 1$ are valued in $\{t\} \cup \pi_{n} \cap [t, T]$ with $\pi_{n} := \{kT/2^{n} ; 0 \leq k \leq 2^n\}$. The corresponding value function is:
    
	\[
		\vr_{n}(z, p, m) := \sup_{\phi \in \Phi^{z,\overline{m}}_{n}} J(z, p, m ; \phi), \ \ (z, p, m) \in \mathbf{D}.
	\]
    
    Let $(\theta^{\phi}, \phi \in \Phi^{z, m}_{n})$ be such that $\theta^{\phi}$ is a $\mathbb{F}^{z, \overline{m}, \phi}$-stopping time valued in $\{t\} \cup \pi_{n} \cap [t, T]$. Then,
    
    \[
    		\vr_{n}(z, p, m) = \sup_{\phi \in \Phi^{z, \overline{m}}_{n}}\mathbb{E}_{\overline{m}}\left[\vr_{n}(Z_{\theta^{\phi}}^{z, \phi}, P_{\theta^{\phi}}^{t, p}, M_{\theta^{\phi}}^{z, m, \phi})\right].
    \]
    
    \end{proposition}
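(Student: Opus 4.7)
The plan is the standard two-sided dynamic-programming argument, exploiting the fact that discretizing the allowed impulse times and the stopping times $\theta^{\phi}$ onto the finite grid $\pi_{n}$ reduces every measurability issue to a countable case analysis, even though $\mathbb{F}^{z,\overline{m},\phi}$ depends on the initial data.

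First, I would establish the easy inequality $\vr_{n}(z,p,m)\le\sup_{\phi}\E_{\overline{m}}[\vr_{n}(Z^{z,\phi}_{\theta^{\phi}},P^{t,p}_{\theta^{\phi}},M^{z,m,\phi}_{\theta^{\phi}})]$ by conditioning. Fix $\phi\in\Phi^{z,\overline{m}}_{n}$ and decompose on the (finite) partition $\{\theta^{\phi}=t_{k}\}$, $t_{k}\in\{t\}\cup(\pi_{n}\cap[t,T])$. On each such event, the shifted control $\phi'_{k}:=(\tau_{i}^{\phi},\alpha_{i}^{\phi})_{\tau_{i}^{\phi}>t_{k}}$ belongs to $\Phi^{(t_{k},Z^{z,\phi}_{t_{k}},C^{z,\phi}_{t_{k}},L^{z,\phi}_{t_{k}}),\overline{m}}_{n}$, because the grid structure is preserved and the admissibility constraint \eqref{N_cond} is local in time. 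By the flow property of $(X,C,L,P,M)$ built in \eqref{X}, \eqref{sautsC} and the Bayesian updates, the tower property gives $\E_{\overline{m}}[G^{z,p,m}(\phi)\mid\mathcal{F}^{z,\overline{m},\phi}_{\theta^{\phi}}]=J(Z^{z,\phi}_{\theta^{\phi}},P^{t,p}_{\theta^{\phi}},M^{z,m,\phi}_{\theta^{\phi}};\phi'_{k})\le\vr_{n}(\cdots)$ on $\{\theta^{\phi}=t_{k}\}$; taking expectations and supremum over $\phi$ closes this direction.

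For the reverse inequality, fix $\varepsilon>0$ and $\phi^{0}\in\Phi^{z,\overline{m}}_{n}$. For each $t_{k}$ in the finite grid, cover the range of $(Z^{z,\phi^{0}}_{t_{k}},P^{t,p}_{t_{k}},M^{z,m,\phi^{0}}_{t_{k}})$ by a countable Borel partition $(B_{k,j})_{j}$ of small $\mathfrak{d}$-diameter; on each cell pick a reference point $(z_{k,j},p_{k,j},m_{k,j})$ and an $\varepsilon$-optimal $\phi^{k,j}\in\Phi^{(t_{k},\cdot),\overline{m}}_{n}$ at that point (this is possible because $\vr_{n}$ is a supremum of countably many functionals once one notes that discrete controls can themselves be approximated by countable families). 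Define $\tilde\phi$ to coincide with $\phi^{0}$ up to $\theta^{\phi^{0}}$ and to be $\phi^{k,j}$ on the event $\{\theta^{\phi^{0}}=t_{k}\}\cap\{(Z^{z,\phi^{0}}_{t_{k}},\ldots)\in B_{k,j}\}$. Boundedness of $g$ together with upper semicontinuity of $\vr_{n}^{*}$ controls the error from evaluating $\phi^{k,j}$ at the true state rather than at $(z_{k,j},p_{k,j},m_{k,j})$, yielding $J(z,p,m;\tilde\phi)\ge\E_{\overline{m}}[\vr_{n}(Z^{z,\phi^{0}}_{\theta^{\phi^{0}}},P^{t,p}_{\theta^{\phi^{0}}},M^{z,m,\phi^{0}}_{\theta^{\phi^{0}}})]-C\varepsilon$ for some constant $C$, from which the claim follows by sending $\varepsilon\downarrow 0$ and taking the supremum in $\phi^{0}$.

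The main obstacle is verifying that $\tilde\phi\in\Phi^{z,\overline{m}}_{n}$, i.e.\ that the pasted impulse sequence is still adapted to $\mathbb{F}^{z,\overline{m},\tilde\phi}$ despite the filtration depending on $\tilde\phi$ itself. This is precisely where the discretization pays off: $\theta^{\phi^{0}}$ and all candidate $\tau_{i}^{\phi^{k,j}}$ are deterministic elements of $\pi_{n}$, and the partition $(B_{k,j})_{k,j}$ is countable, so $\tilde\phi$ is a countable concatenation of finitely many admissible tails, each measurable by construction with respect to the shifted filtration $\mathbb{F}^{(t_{k},\cdot),\overline{m},\phi^{k,j}}$. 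One then checks, along the lines of Proposition~4.2 of \cite{baradel2016optimal} transposed to our Poisson/Bayesian setting, that these shifted filtrations embed correctly into $\mathbb{F}^{z,\overline{m},\tilde\phi}$, giving adaptedness and completing the proof.
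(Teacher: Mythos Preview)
Your sketch follows essentially the same route as the paper, which defers entirely to Lemma~4.1, Proposition~4.3 and Corollary~4.1 of \cite{baradel2016optimal}: prove measurability of $\vr_{n}$, then run the two-sided DPP argument by conditioning for the $\le$ direction and by pasting $\varepsilon$-optimal controls along a countable partition for the $\ge$ direction, exploiting that the grid $\pi_{n}$ reduces everything to finitely many time-levels. Two small points to tighten: in the reverse inequality, the error from replacing the true state by the reference point $(z_{k,j},p_{k,j},m_{k,j})$ is controlled by continuity of $J(\cdot;\phi^{k,j})$ in the initial data together with \emph{lower} semicontinuity (or measurability plus a measurable-selection argument, which is what Lemma~4.1 in \cite{baradel2016optimal} provides), not upper semicontinuity of $\vr_{n}^{*}$; and the $\tau_{i}^{\phi^{k,j}}$ are of course not deterministic but merely valued in the finite set $\pi_{n}$, which is what you actually use.
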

    
    \begin{proof}
    	It suffices to follow the arguments of Lemma 4.1, Proposition 4.3 and Corollary 4.1 in \cite{baradel2016optimal}.
    \end{proof}

\noindent We now consider the limit $n \rightarrow +\infty$. Let us set, for $(z, p, m) \in \mathbf{D}$,

	\[
    	\vr_{\circ}(z, p, m) := \liminf_{(z', p', m', n) \rightarrow (z, p, m, +\infty)}\vr_{n}(z', p', m').
    \]

\begin{proposition}
	The function $\vr_{\circ}$ is a viscosity super-solution of (\ref{interieur})-(\ref{bordT})-(\ref{bordl}).
\end{proposition}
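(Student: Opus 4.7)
The plan is to leverage the discrete-time dynamic programming principle of Proposition \ref{PPDsur} and verify each of the three super-solution requirements in the definition for $\vr_\circ$. Fix $\mathbf{J} \in \mathcal{P}(\mathbf{K})$, $(z_\circ, p_\circ, m_\circ) \in \mathbf{D}_{\mathbf{J}}$ with $t_\circ < T$, and a test function $\varphi \in \mathscr{C}^{1}_{\mathbf{J}}$ such that $\vr_\circ - \varphi$ attains a global minimum at $(z_\circ, p_\circ, m_\circ)$; after a standard quadratic perturbation of $\varphi$ the minimum can be assumed to be strict. The definition of $\vr_\circ$ as a lower limit then delivers a sequence $(z_n, p_n, m_n, k_n) \to (z_\circ, p_\circ, m_\circ, +\infty)$ with $\vr_{k_n}(z_n, p_n, m_n) \to \varphi(z_\circ, p_\circ, m_\circ)$, and the strict-minimum property gives $\vr_{k_n} \geq \varphi + o(1)$ uniformly on a fixed compact neighborhood.

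For the obstacle inequality $(\varphi - \mathcal{K}\vr_\circ)(z_\circ, p_\circ, m_\circ) \geq 0$, remark that for each $a \in \mathbf{A}$ the policy $\phi^a$ issuing a single bond with parameters $a$ at time $t_n$ lies in $\Phi_{k_n}^{z_n, \overline{m}}$, so Proposition \ref{PPDsur} applied with the stopping time $\theta^{\phi^a} \equiv t_n$ yields $\vr_{k_n}(z_n, p_n, m_n) \geq \mathcal{K}^a \vr_{k_n}(z_n, p_n, m_n)$. Passing to the liminf in $n$, invoking the semicontinuity supplied by Assumption \ref{K_continuity}, and taking the supremum over $a \in \mathbf{A}$ yields $\vr_\circ(z_\circ, p_\circ, m_\circ) \geq \mathcal{K}\vr_\circ(z_\circ, p_\circ, m_\circ)$.

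For the infinitesimal inequality $-\mathcal{L}_\star^{\mathbf{J}} \varphi(z_\circ, p_\circ, m_\circ) \geq 0$, I would select the \emph{no-action} control $\phi^0 \in \Phi_{k_n}^{z_n, \overline{m}}$, with no issuance before $T$, and the stopping time $\theta_n := t_n + h_n$ in which $h_n := 2^{-k_n}\lceil 2^{k_n} h \rceil$ is a grid point for a small $h > 0$. Proposition \ref{PPDsur} then gives
\[
	\vr_{k_n}(z_n, p_n, m_n) \geq \mathbb{E}_{\overline{m}}\bigl[\vr_{k_n}(Z^{z_n, \phi^0}_{\theta_n}, P^{t_n, p_n}_{\theta_n}, M^{z_n, m_n, \phi^0}_{\theta_n})\bigr],
\]
and after replacing $\vr_{k_n}$ by $\varphi$ on the right-hand side using the neighborhood bound I would apply an It\^o-type expansion to $\varphi$ that accounts for the drift of $X$ from $\mu$ and from the continuous coupon $\overline{C}$ on every running contract indexed by $\mathbf{J}$, the unit-speed evolution of each $l_j$ with $j \in \mathbf{J}$, the drift $h_1$ on the parametrized measure process $P$, and the jumps of $N$ which simultaneously update $(x, c, l, p, m^\gamma)$ into the state encoded by $\mathcal{I}[\varphi, u]$ under the compensator $\Lambda(t, \lambda_0)\Upsilon(\gamma_0, du)$. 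Taking conditional expectation under $\mathbb{P}_{\overline{m}}$ averages the unknown $(\lambda_0, \gamma_0)$ into the operator $\mathcal{L}_\star^{\mathbf{J}}$, and dividing by $h$ and sending $n \to \infty$ followed by $h \to 0$ delivers the required inequality thanks to the continuity of $\mathcal{L}_\star^{\mathbf{J}}\varphi$ from Lemma \ref{L_continuity}.

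The lateral condition (\ref{bordl}) is obtained by specializing the previous reasoning to a sequence whose $l$-coordinate approaches $\mathfrak{L}^{\mathbf{J}'}_{\mathbf{J}}(l_\circ)$: the definition of $\mathfrak{C}_{-}^{\ell}$ forces an instantaneous transition to the state $d_\circ'$ as some $l_j \uparrow \ell$, and combining the discrete DPP with the already-established obstacle inequality applied at $d_\circ'$ produces the required $\max\{\vr_\circ(d_\circ'), \mathcal{K}\vr_\circ(d_\circ')\}$ lower bound. The terminal case $t_\circ = T$ is handled similarly from $\vr_{k_n}(T, \cdot) = g$ on the grid combined with one admissible grid action at the last time before $T$ when $\mathbf{J} \neq \mathbf{K}$. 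I expect the main obstacle to lie in the It\^o expansion step: the simultaneous jumps of $(X, C, L, P, M^\gamma)$ at a single atom of $N$ — several contracts can expire at once through $\mathfrak{F}$ and $\mathfrak{C}_{-}$ when the jump belongs to the intersection of several layers $A_{k_j}$ — must be matched exactly, after conditioning on $(\lambda_0, \gamma_0)$, against $\int_{\mathbb{R}^d}[\mathcal{I}[\varphi, u] - \varphi]\Lambda(t, \lambda_0)\Upsilon(\gamma_0, du)$, a matching that relies crucially on the smooth parametrization of $\mathbf{M}^\lambda$ supplied by Assumptions \ref{m_lambda}--\ref{meds} and on the continuity statements collected in Assumption \ref{AL_continuity}.
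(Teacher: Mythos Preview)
Your proposal is correct and follows essentially the same route as the paper: both derive the interior inequality and the obstacle inequality from the discrete dynamic programming principle of Proposition~\ref{PPDsur} (no-action control plus an It\^o-type expansion for $-\mathcal{L}_\star^{\mathbf{J}}\varphi\ge 0$, single-action control $\phi^a$ for $\vr_\circ\ge\mathcal{K}\vr_\circ$), and both pass to the relaxed lower limit via Fatou.

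Two small points of comparison. First, for the $\mathcal{K}$-part of the lateral condition~(\ref{bordl}) you invoke the already-established inequality $\vr_\circ(d_\circ')\ge\mathcal{K}\vr_\circ(d_\circ')$, which is legitimate since $d_\circ'\in\mathbf{D}_{\mathbf{J}\setminus\mathbf{J}'}$ with $\mathbf{J}\setminus\mathbf{J}'\ne\mathbf{K}$; the paper instead repeats its Steps~1--2 with the control $\phi^a$ in place of $\phi^0$, so your route is slightly more economical. Second, your phrase ``forces an instantaneous transition to the state $d_\circ'$'' glosses over the actual mechanism the paper makes explicit in its Step~2: the condition~(\ref{bordl}) is a liminf statement on $\vr_\circ$ itself, with no test function, so the It\^o expansion you used for~(\ref{interieur}) does not apply directly. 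One has to run the process forward from a point with $l$ close to $\mathfrak{L}^{\mathbf{J}'}_{\mathbf{J}}(l_\circ)$ for a small time $\varepsilon$ under $\phi^0$, use that the finite-activity Poisson measure almost surely does not jump on $[t,t+\varepsilon]$ as $\varepsilon\to0$ so that the only change is the deterministic crossing $l_j\uparrow\ell$ for $j\in\mathbf{J}'$ followed by $\mathfrak{C}_{-}^{\ell}$, and then apply Fatou to land on $\vr_\circ(d_\circ')$. Your sketch is compatible with this, but the ``previous reasoning'' you appeal to is not the test-function argument; it is this $\varepsilon$-forward-run argument.
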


\begin{proof} 

	The equations (\ref{interieur}) and (\ref{bordT}) can be obtained by using Proposition \ref{PPDsur} and following the arguments in the proof of Proposition 4.4 in \cite{baradel2016optimal}. We now prove the boundary condition (\ref{bordl}).

    Step 1. Fix $\mathbf{J} \subset \mathcal{P}(\mathbf{K})$ and $(z, p, m) \in \mathbf{D}_{\mathbf{J}}$. 
    
    Let $n_{k} \rightarrow +\infty$ and $(z_{k}, p_{k}, m_{k}) \rightarrow (z, p, m)$ such that $\vr_{n_{k}}(z_{k}, p_{k}, m_{k}) \rightarrow \vr_{\circ}(z, p, m)$. Let $k_{\circ} \geq 1$ and define the lower semi-continuous function $\varphi_{k_{\circ}}$ as in the proof of Proposition 4.4 in \cite{baradel2016optimal}. Then, from Proposition \ref{PPDsur}, with $\phi^{0} \in \Phi^{t, x, m}$  a control such that $\tau_{1}^{\phi^{0}} > T$, we get for $k \geq k_{\circ}$
    
    \[
    	\vr_{n_{k}}(z_{k}, p_{k}, m_{k})  \geq \mathbb{E}_{\overline{m}}\left[\varphi_{k_{\circ}}(Z_{\theta^{\phi^{0}}}^{z_{k}, \phi^{0}}, P_{\theta^{\phi^{0}}}^{t_{k}, p_{k}}, M_{\theta^{\phi^{0}}}^{z_{k}, m_{k}, \phi^{0}})\right].
    \]
    
    Then, $k \rightarrow +\infty$ leads to
    
    \[
    	\vr_{\circ}(z, p, m)  \geq \mathbb{E}_{\overline{m}}\left[\varphi_{k_{\circ}}(Z_{\theta^{\phi^{0}}}^{z, \phi_{0}}, P_{\theta^{\phi^{0}}}^{t, p}, M_{\theta^{\phi^{0}}}^{z, m, \phi^{0}})\right]
    \]
    
    and, again from the proof of Proposition 4.4 in \cite{baradel2016optimal}, we get that $\lim_{k_{\circ} \rightarrow +\infty}\varphi_{k_{\circ}} \geq \vr_{\circ}$. By Fatou's lemma we have 

    \[
    	\vr_{\circ}(z, p, m)  \geq \mathbb{E}_{\overline{m}}\left[\vr_{\circ}(Z_{\theta^{\phi^{0}}}^{z, \phi_{0}}, P_{\theta^{\phi^{0}}}^{t, p}, M_{\theta^{\phi^{0}}}^{z, m, \phi^{0}})\right].
    \]

    Step 2. Now fix $\mathbf{J}' \subset \mathbf{J}$ and $(z_{\circ}, p_{\circ}, m_{\circ}) \in \mathbf{D}_{\mathbf{J}}$. Let $k \rightarrow +\infty$ and $(z_{k}, p_{k}, m_{k}) \rightarrow (t_{\circ}, x_{\circ}, c_{\circ}, \mathfrak{L}^{\mathbf{J}'}_{\mathbf{J}}(l_{\circ}), p_{\circ}, m_{\circ})$ such that 
    \[
    \vr_{\circ}(z_{k}, p_{k}, m_{k}) \rightarrow \liminf_{(z, p, m) \rightarrow (t_{\circ}, x_{\circ}, c_{\circ}, \mathfrak{L}^{\mathbf{J}'}_{\mathbf{J}}(l_{\circ}), p_{\circ}, m_{\circ})}\vr_{\circ}(z, p, m). 
    \]
    
    We introduce $h_{k} := \mathfrak{L}^{\mathbf{J}'}_{\mathbf{J}}(l_{\circ}) - l_{k}$. Then, for $k_{\circ}$ large enough, we can find $\varepsilon > 0$ such that $\sup_{k \geq k_{\circ}}\max_{j \in \mathbf{J}'} h_{k}^{j} < \varepsilon < \inf_{k \geq k_{\circ}}\max_{j \in \mathbf{J} \backslash \mathbf{J}'}(\ell - l_{k}^{j})$. Then, for $k \geq k_{\circ}$,
   		 \[
    	\vr_{\circ}(t_{k}, x_{k}, c_{k}, \mathfrak{L}^{\mathbf{J}'}_{\mathbf{J}}(l_{\circ}) - h_{k}, p_{k}, m_{k}) \geq \mathbb{E}\left[\vr_{\circ}(Z^{z_{k}, \phi_{0}}_{t+\varepsilon}, P^{t_{k}, p_{k}}_{t+\varepsilon}, M^{z_{k}, m_{k}, \phi_{0}}_{t+\varepsilon})\right].
		\]
        
        Now, we send $k \rightarrow +\infty$, since the functions in the diffusion are Lipschitz, using Fatou's lemma leads to

   		 \[
    	\lim_{k \rightarrow +\infty}\vr_{\circ}(t_{k}, x_{k}, c_{k}, \mathfrak{L}^{\mathbf{J}'}_{\mathbf{J}}(l_{\circ}) - h_{k}, p_{k}, m_{k})  \geq \mathbb{E}\left[\vr_{\circ}(Z^{z, \phi_{0}}_{t+\varepsilon}, P^{t, p}_{t+\varepsilon}, M^{z, m, \phi_{0}}_{t+\varepsilon})\right].
		\]

Since, under the control $\phi^{0}$, the processes $X$, $P$ and $M$ are driven here by the random Poisson measure with finite activity, they satisfy the stochastic continuity property. Moreover, since the probability of observing a jump decreases to 0 when $\varepsilon \rightarrow 0$, one easily shows that, 

   		 \[
    	\lim_{k \rightarrow +\infty}\vr_{\circ}(t_{k}, x_{k}, c_{k}, \mathfrak{L}^{\mathbf{J}'}_{\mathbf{J}}(l_{\circ}) - h_{k}, p_{k}, m_{k})  \geq \vr_{\circ}(t_{\circ}, x_{\circ}, \mathfrak{C}_{-}^{\ell}[c_{\circ},\mathfrak{L}^{\mathbf{J}'}_{\mathbf{J}}(l_{\circ})], p_{\circ}, m_{\circ}),
		\]
        by using the fact that $\vr_{\circ}$ is bounded and the definition of the process $C$ and $L$ after the end of one or several CAT bonds.

        Step 3. In order to show the second inequality, repeat Step 1. and Step 2. using, instead of $\phi^{0}$, a control $\phi^{a} \in \Phi^{z,\overline{m}}_{\kappa}$ such that $\{\tau^{\phi^{a}}_{1} = t, \alpha_{1}^{\phi^{a}} = a, \tau_{2}^{\phi^{a}} > T\}$ holds with probability one.

\end{proof}

\noindent We now prove   Theorem \ref{viscosity}.

\begin{proof}[Proof of Theorem \ref{viscosity}.]
	We already know that $\vr^{*}$ and $\vr_{\circ}$ are respectively a bounded sub- and super-solution of (\ref{interieur})-(\ref{bordT})-(\ref{bordl}). Then, under Assumption \ref{hypcomparaison}, $\vr^{*} \leq \vr_{\circ}$. Moreover, by construction, $\vr_{\circ} \leq \vr \leq \vr^{*}$. Then, $\vr$ is continuous and the unique solution of (\ref{interieur})-(\ref{bordT})-(\ref{bordl}).
\end{proof}

        \begin{remark}
        	If we denote by $\mathcal{S}_{\mathbf{K}}$ the set of permutation of $\{1 \leq k \leq \kappa\}$, then, by symmetry, 
            \[
            	\vr(z, p, m) = \vr(t, x, (c, l) \circ \Sigma, p, m) 
            \]
            for each $\Sigma \in \mathcal{S}_{\mathbf{K}}$, $(z, p, m) \in \mathbf{D}$.
            From a numerical point of view, this make it easier to compute the value function.
        \end{remark}

\section{A sufficient condition for the comparison}

In this section, we provide a sufficient condition for Assumption \ref{hypcomparaison} to hold.

\begin{proposition}\label{cat_comparaison}
	Assumption \ref{hypcomparaison} holds whenever there exists a function $\Psi$ on $[0, T) \times \mathbb{R}^{d} \times \mathbf{CL} \times \mathbf{P} \times \mathbf{M}$ such that, for each $\mathbf{J} \in \mathcal{P}(\mathbf{K})$,
    \begin{itemize}
    	\item[(i)] $(t, x, l, p) \mapsto \Psi(t, x, c, l, p, m) \in C^{1,1,1,1}([0, T) \times \mathbb{R}^{d} \times [0, \ell) \times \mathbf{P})$ for all $(c, m) \in \mathbf{C} \times \mathbf{M}$,
        \item[(ii)] $\varrho\Psi \geq \mathcal{L}_{*}^{\mathbf{J}}\Psi$ on $\mathbf{D}_{\mathbf{J}}$ for some $\varrho > 0$,
        \item[(iii)] $\Psi - \mathcal{K}\Psi \geq \delta$ on $\mathbf{D}_{\mathbf{J}}$ for some $\delta > 0$,
        \item[(iv)] $\Psi \geq max(\mathcal{K}\tilde{g}, \tilde{g})$ on $\mathbb{R}^{d} \times \mathbf{CL}_{\mathbf{J}} \times \mathbf{P} \times \mathbf{M}$ with $\tilde{g}(t, \cdot) := e^{\varrho t}g(t, \cdot)$ and $\varrho$ is defined in \textit{(ii)},
        \item[(v)] $\liminf_{l' \rightarrow \mathfrak{L}^{\mathbf{J}'}_{\mathbf{J}}(l)}\Psi(\cdot, c, l', \cdot) - \Psi(\cdot, \mathfrak{C}^{\ell}_{-}(c, \mathfrak{L}^{\mathbf{J}'}_{\mathbf{J}}(l)), .)\geq  0$ for all $\mathbf{J}' \subset \mathbf{J}$,
        \item[(vi)] $\Psi^{-} \leq \overline{\Psi}(x) = o(\|x\|^{2})$ as $\|x\|^{2} \rightarrow +\infty$ for some $\overline{\Psi} : \mathbb{R}^{d} \rightarrow \mathbb{R}$. 
        
    \end{itemize}
    
 \end{proposition}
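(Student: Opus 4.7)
The plan is the classical strict-super-solution perturbation argument, using $\Psi$ to generate the needed strict inequalities in every branch of the coupled system (\ref{interieur})--(\ref{bordT})--(\ref{bordl}). I would first pass to the rescaled functions $\widetilde{U}(t,\cdot) := e^{\varrho t} U(t,\cdot)$ and $\widetilde{V}(t,\cdot) := e^{\varrho t} V(t,\cdot)$. Since $\mathcal{K}$ and the nonlocal integral in $\mathcal{L}_{\star}^{\mathbf{J}}$ both act at a fixed time $t$, the substitution passes through cleanly and converts the interior PDE into the dissipative form $-\mathcal{L}_{\star}^{\mathbf{J}}\tilde{\varphi} + \varrho\tilde{\varphi} = 0$, under which (ii) reads exactly as ``$\Psi$ is a classical super-solution'' while (iv) reads $\Psi \geq \max(\mathcal{K}\widetilde{g}, \widetilde{g})$ at $t=T$ with $\widetilde{g} := e^{\varrho t} g$.

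Next, assume for contradiction that $\sup_{\mathbf{D}}(\widetilde{U} - \widetilde{V}) > 0$ and consider the perturbed difference $W_\eta := \widetilde{U} - \widetilde{V} - \eta\Psi$ for small $\eta > 0$. Condition (vi) combined with the boundedness of $U,V$ ensures that $W_\eta$ still has a strictly positive supremum and that it is attained, up to a standard doubling-of-variables regularization (needed because $U,V$ are only semicontinuous and $\mathcal{L}_{\star}^{\mathbf{J}}$ is nonlocal). Condition (iv) rules out maximizers at $t=T$, and (v) rules out maximizers on the transition boundary $l \to \mathfrak{L}_{\mathbf{J}}^{\mathbf{J}'}(l)$; hence a maximizer $(z_*, p_*, m_*)$ lies in the interior of some stratum $\mathbf{D}_{\mathbf{J}}$ with $t_* < T$.

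At $(z_*, p_*, m_*)$, the key computation compares sub- and super-solution inequalities. For the interior PDE branch, using $\widetilde{V} + \eta\Psi$ as a test function from above for $\widetilde{U}$ and subtracting the two viscosity inequalities yields $\varrho(\widetilde{U}-\widetilde{V})(z_*) \leq \eta\, \mathcal{L}_{\star}^{\mathbf{J}}\Psi(z_*)$; since $(\widetilde{U}-\widetilde{V})(z_*) > \eta\Psi(z_*)$ at the maximizer, this gives the strict inequality $\varrho\Psi(z_*) < \mathcal{L}_{\star}^{\mathbf{J}}\Psi(z_*)$, contradicting (ii). For $\mathbf{J} \neq \mathbf{K}$, the sub-solution may instead satisfy the impulse alternative $\widetilde{U}(z_*) \leq \mathcal{K}\widetilde{U}(z_*)$; coupling with the super-solution $\widetilde{V}(z_*) \geq \mathcal{K}\widetilde{V}(z_*)$, together with the pointwise bound $(\widetilde{U} - \widetilde{V})(z) \leq W_\eta(z_*) + \eta\Psi(z)$ valid at any $z$ and (iii), yields
\[
\widetilde{U}(z_*) - \widetilde{V}(z_*) \;\leq\; \mathcal{K}\widetilde{U}(z_*) - \mathcal{K}\widetilde{V}(z_*) \;\leq\; W_\eta(z_*) + \eta\bigl(\Psi(z_*) - \delta\bigr),
\]
and since the left-hand side equals $W_\eta(z_*) + \eta\Psi(z_*)$, the contradiction $0 \leq -\eta\delta$ follows.

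The main obstacle I expect is the stratum coupling: $\mathcal{K}$ raises the stratum index from $\mathbf{J}$ to $\mathbf{J} \cup \{\Pi^0(c,l)\}$ while (\ref{bordl}) lowers it, so a stratum-by-stratum induction is not available. This is circumvented precisely by working with the single global quantity $\sup_{\mathbf{D}}(\widetilde{U} - \widetilde{V})$ throughout: the impulse and $\mathfrak{L}$-boundary contributions admit the crude bound $\mathcal{K}\widetilde{U} - \mathcal{K}\widetilde{V} \leq \sup_{\mathbf{D}}(\widetilde{U} - \widetilde{V})$ and its boundary analog, with strictness restored by (iii) and (v) respectively. A secondary, more technical issue is the adaptation of Ishii--Jensen type arguments to the nonlocal operator in $\mathcal{L}_{\star}^{\mathbf{J}}$ and to merely semicontinuous data; this follows standard references on integro-differential viscosity solutions and would be carried out along the same lines as in \cite{baradel2016optimal}.
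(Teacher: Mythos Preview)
Your proposal is correct and follows essentially the same strategy as the paper: rescale by $e^{\varrho t}$, perturb the supersolution by $\Psi$ to create strictness in every branch, then obtain a contradiction via doubling/Ishii for the differential part, via (iii) for the obstacle part, and via (iv)--(v) for the two boundaries. The only cosmetic difference is that the paper perturbs by a convex combination, setting $\tilde v^{\lambda}:=(1-\lambda)\tilde v+\lambda\Psi$ and comparing $\tilde u$ directly to this strict supersolution, whereas you subtract $\eta\Psi$ additively; the paper also spells out explicitly the extra $\varepsilon(\|x\|^{2}+\|p\|^{2}+\ldots)$ penalisation needed for attainment on the unbounded state space (its Steps~2 and~6, which is precisely where (vi) enters), a point you fold into ``standard doubling-of-variables regularization.''
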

 
 \begin{proof}
 
 Step 1. As usual, we shall argue by contradiction. We assume that there exists some $\mathbf{J}_{0} \in \mathcal{P}(\mathbf{K})$ and some $(z_{0}, p_{0}, m_{0}) \in \mathbf{D}_{\mathbf{J}}$ such that $(U - V)(z_{0}, p_{0}, m_{0}) > 0$, in which $U$ is a sub-solution of (\ref{interieur})-(\ref{bordT})-(\ref{bordl}) and $V$ is a super-solution of (\ref{interieur})-(\ref{bordT})-(\ref{bordl}). Recall the definition of $\Psi, \varrho$ and $\tilde{g}$ in Proposition \ref{cat_comparaison}. We set $\tilde{u}(t, .) = e^{\varrho t}U(t, .)$ and $\tilde{v}(t, .) = e^{\varrho t}V(t, .)$ for all $(t, .) \in \mathbf{D}_{\mathbf{J}}$ for all $\mathbf{J} \in \mathcal{P}(\mathbf{K})$. Then, there exists $\lambda \in (0, 1)$ such that
 	\begin{equation}\label{comparison_absurde}
    	(\tilde{u} - \tilde{v}^{\lambda})(z_{0}, p_{0}, m_{0}) > 0,
    \end{equation}
    in which $\tilde{v}^{\lambda} := (1-\lambda)\tilde{v} + \lambda\Psi$. Note that $\tilde{u}$ and $\tilde{v}$ are sub and super-solution on $\mathbf{D}_{\mathbf{J}}$ of
    \[
    	\min\left\{\varrho \varphi - \mathcal{L}_{*}^{\mathbf{J}}\varphi, \varphi - \mathcal{K}\varphi\right\} = 0
    \]
   for each $\mathbf{J} \in \mathcal{P}(\mathbf{K})$, 
 with the boundary conditions 
    \begin{equation}\label{comparaison_boundaryT}
    	\mathbf{1}_{\{\mathbf{J} = \mathbf{K}\}}(\varphi(T,\cdot) - \tilde{g}) + \mathbf{1}_{\{\mathbf{J} \not= \mathbf{K}\}}\min\left\{\varphi(T,\cdot) - \tilde{g}, \varphi(T,\cdot) - \mathcal{K}\tilde{g}\right\} = 0,
    \end{equation}
    and  
    \begin{equation}\label{comparaison_boundaryL}
    \ \lim_{l' \rightarrow \mathfrak{L}^{\mathbf{J}'}_{\mathbf{J}}(l)} \varphi (., c, l', .) = \varphi (., \mathfrak{C}_{-}^{\ell}[c, \mathfrak{L}^{\mathbf{J}'}_{\mathbf{J}}(l)], .) \; \ \	\forall \mathbf{J}' \subset \mathbf{J}, \;(c,l)\in \mathbf{CL}_{\mathbf{J}}
    \end{equation}
 Step 2. Let $d_{\mathbf{M}}$ be a metric on $\mathbf{M}$ compatible with the weak topology. For $(t, x, y, c, l, p, q, m) \in \mathbf{D}' := [0, T] \times \mathbb{R}^{d} \times \mathbb{R}^{d} \times \mathbf{C}\mathbf{L} \times \mathbf{P}^{2} \times \mathbf{M}$, we set :
 	\begin{equation}\label{comparison_defgamma}
    	\begin{aligned}
    	\Gamma_{\varepsilon}(t, x, y, c, l, p, q, m) :=  &\tilde{u}(t, x, c, l, p, m) - \tilde{v}^{\lambda}(t, y, c, l, q, m) \\ 
        &- \varepsilon\left(\|x\|^2 + \|y\|^2 + \mathfrak{d}(c, l) + \|p\|^2 + \|q\|^2+ d_{\mathbf{M}}(m) \right)
        \end{aligned}
    \end{equation}
 with $\varepsilon > 0$ small enough such that $\Gamma_{\varepsilon}(t_{0}, x_{0}, x_{0}, c_{0}, l_{0}, p_{0}, p_{0}, m_{0}) > 0$. Although $[0, \ell)$ is not closed, note that the supremum is achieved for some $\mathbf{J}_{\varepsilon} \in \mathcal{P}(\mathbf{K})$ by some $(t_{\varepsilon}, x_{\varepsilon}, y_{\varepsilon},$ $ c_{\varepsilon}, l_{\varepsilon}, p_{\varepsilon}, q_{\varepsilon}, m_{\varepsilon}) \in \mathbf{D}_{\mathbf{J}_{\varepsilon}}$. This follows from the upper-semicontinuity of $\Gamma_{\varepsilon}$, the fact that $\tilde{u}, -\tilde{v}$ and $-\Psi$ are bounded from above, and by the fact that
 	\[
    	\limsup_{l' \rightarrow \mathfrak{L}^{k}_{\mathbf{J}}(l)}(\tilde{u} - \tilde{v}^{\lambda})(., c, l', .) \leq (\tilde{u} - \tilde{v}^{\lambda})(., \mathfrak{C}^{\ell}_{-}(c, \mathfrak{L}^{k}_{\mathbf{J}}(l)), .).
    \]
            For $(t, x, y, c, l, p, q, m) \in \mathbf{D}'$, we set
	\[
    	\Theta_{\varepsilon}^{n}(t, x, y, c, l, p, q, m) = \Gamma_{\varepsilon}(t, x, y, c, l, p, q, m) - n\left(\|x-y\|^{2} + \|p-q\|^{2}\right).
    \]
    
    Again, there is $(t_{n}^{\varepsilon}, x_{n}^{\varepsilon}, y_{n}^{\varepsilon}, c_{n}^{\varepsilon}, l_{n}^{\varepsilon}, p_{n}^{\varepsilon}, q_{n}^{\varepsilon}, m_{n}^{\varepsilon}) \in \mathbf{D}'$ such that
    \[
    	\sup_{\mathbf{D}'} \Theta_{\varepsilon}^{n} = \Theta_{\varepsilon}^{n}(t_{n}^{\varepsilon}, x_{n}^{\varepsilon}, y_{n}^{\varepsilon}, c_{n}^{\varepsilon}, l_{n}^{\varepsilon}, p_{n}^{\varepsilon}, q_{n}^{\varepsilon}, m_{n}^{\varepsilon}).
    \]
    It is standard to show that, after possibly considering a subsequence,
    \begin{equation}\label{comparison_convergence}
    	\begin{aligned}
        	&(t_{n}^{\varepsilon}, x_{n}^{\varepsilon}, y_{n}^{\varepsilon}, c_{n}^{\varepsilon}, l_{n}^{\varepsilon}, p_{n}^{\varepsilon}, q_{n}^{\varepsilon}, m_{n}^{\varepsilon}) \rightarrow (\hat{t}_{\varepsilon}, \hat{x}_{\varepsilon}, \hat{y}_{\varepsilon}, \hat{c}_{\varepsilon}, \hat{l}_{\varepsilon}, \hat{p}_{\varepsilon}, \hat{q}_{\varepsilon}, \hat{m}_{\varepsilon}) \in \mathbf{D}', \\
            & n\left(\|x_{n}^{\varepsilon}-y_{n}^{\varepsilon}\|^{2} + \|p_{n}^{\varepsilon}-q_{n}^{\varepsilon}\|^{2}\right) \rightarrow 0, \text{ and}\\
            & \Theta_{\varepsilon}^{n}(t_{n}^{\varepsilon}, x_{n}^{\varepsilon}, y_{n}^{\varepsilon}, c_{n}^{\varepsilon}, l_{n}^{\varepsilon}, p_{n}^{\varepsilon}, q_{n}^{\varepsilon}, m_{n}^{\varepsilon}) \rightarrow \Gamma_{\varepsilon}(\hat{t}_{\varepsilon}, \hat{x}_{\varepsilon}, \hat{y}_{\varepsilon}, \hat{c}_{\varepsilon}, \hat{l}_{\varepsilon}, \hat{p}_{\varepsilon}, \hat{q}_{\varepsilon}, \hat{m}_{\varepsilon}) = \Gamma_{\varepsilon}(t_{\varepsilon}, x_{\varepsilon}, y_{\varepsilon}, c_{\varepsilon}, l_{\varepsilon}, p_{\varepsilon}, q_{\varepsilon}, m_{\varepsilon}),
        \end{aligned}
    \end{equation}
see e.g. \cite[Lemma 3.1]{crandall1992user}. Moreover, up to a subsequence, there exists $n_{0} \in \mathbb{N}$, such that, for all $n \geq n_{0}$, $(t_{n}^{\varepsilon}, x_{n}^{\varepsilon}, c_{n}^{\varepsilon}, l_{n}^{\varepsilon}, p_{n}^{\varepsilon}, m_{n}^{\varepsilon}) \in D_{\mathbf{J}_{\varepsilon}}$ and $(t_{n}^{\varepsilon}, y_{n}^{\varepsilon}, c_{n}^{\varepsilon}, l_{n}^{\varepsilon}, q_{n}^{\varepsilon}, m_{n}^{\varepsilon}) \in D_{\mathbf{J}_{\varepsilon}}$.

Step 3. We first assume that, up to a subsequence, $(\tilde{u} - \mathcal{K}\tilde{u})(t_{n}^{\varepsilon}, x_{n}^{\varepsilon}, c_{n}^{\varepsilon}, l_{n}^{\varepsilon}, p_{n}^{\varepsilon}, m_{n}^{\varepsilon}) \leq 0$, for $n \geq 1$. Then, it follows from the supersolution property of $\tilde{v}$ and Condition (iii)  that
	\[
    	\begin{aligned}
    	&\tilde{u}(t_{n}^{\varepsilon}, x_{n}^{\varepsilon}, c_{n}^{\varepsilon}, l_{n}^{\varepsilon}, p_{n}^{\varepsilon}, m_{n}^{\varepsilon}) - \tilde{v}^{\lambda}(t_{n}^{\varepsilon}, y_{n}^{\varepsilon}, c_{n}^{\varepsilon}, l_{n}^{\varepsilon}, q_{n}^{\varepsilon}, m_{n}^{\varepsilon}) \leq \\
         & \mathcal{K}\tilde{u}(t_{n}^{\varepsilon}, x_{n}^{\varepsilon}, c_{n}^{\varepsilon}, l_{n}^{\varepsilon}, p_{n}^{\varepsilon}, m_{n}^{\varepsilon}) - \mathcal{K}\tilde{v}^{\lambda}(t_{n}^{\varepsilon}, y_{n}^{\varepsilon}, c_{n}^{\varepsilon}, l_{n}^{\varepsilon}, q_{n}^{\varepsilon}, m_{n}^{\varepsilon}) - \lambda\delta.
        \end{aligned}
    \]
    Passing to the $\limsup$ and using (\ref{comparison_convergence}) and (\ref{K_continuity}), we obtain 
    \[
    	(\tilde{u} - \tilde{v}^{\lambda})(\hat{t}_{\varepsilon}, \hat{x}_{\varepsilon}, \hat{c}_{\varepsilon}, \hat{l}_{\varepsilon}, \hat{p}_{\varepsilon}, \hat{m}_{\varepsilon}) + \lambda\delta \leq \mathcal{K}(\tilde{u} - \tilde{v}^{\lambda})(\hat{t}_{\varepsilon}, \hat{x}_{\varepsilon}, \hat{c}_{\varepsilon}, \hat{l}_{\varepsilon}, \hat{p}_{\varepsilon}, \hat{m}_{\varepsilon})
        \]
        Now let us observe that
        \begin{equation}\label{comparison_gamma}
        \begin{aligned}
        	\sup_{\mathbf{D}}(\tilde{u} - \tilde{v}^{\lambda}) &= \lim_{\varepsilon \rightarrow 0} \ \sup_{(t, x, c, l, p, m) \in \mathbf{D}} \Gamma_{\varepsilon}(t, x, x, c, l, p, p, m) \\
            &= \lim_{\varepsilon \rightarrow 0} \Gamma_{\varepsilon}(t_{\varepsilon}, x_{\varepsilon}, x_{\varepsilon}, c_{\varepsilon}, l_{\varepsilon}, p_{\varepsilon}, p_{\varepsilon}, m_{\varepsilon}) \\ 
            &= \lim_{\varepsilon \rightarrow 0} \Gamma_{\varepsilon}(\hat{t}_{\varepsilon}, \hat{x}_{\varepsilon}, \hat{x}_{\varepsilon}, \hat{c}_{\varepsilon}, \hat{l}_{\varepsilon}, \hat{p}_{\varepsilon}, \hat{p}_{\varepsilon}, \hat{m}_{\varepsilon}),
            \end{aligned}
        \end{equation}
  in which the last identity follows from (\ref{comparison_convergence}). Combined with the above inequality, this shows that $\sup_{\mathbf{D}}(\tilde{u} - \tilde{v}^{\lambda}) + \lambda\delta \leq \limsup_{\varepsilon \rightarrow 0}\mathcal{K}(\tilde{u} - \tilde{v}^{\lambda})(\hat{t}_{\varepsilon}, \hat{x}_{\varepsilon}, \hat{c}_{\varepsilon}, \hat{l}_{\varepsilon}, \hat{p}_{\varepsilon}, \hat{m}_{\varepsilon})$, which leads to a contradiction for $\varepsilon$ small enough.

Step 4. We now show that there is a subsequence such that $t_{n}^{\varepsilon} < T$ for all $n \geq 1$. If not, one can assume that $t_{n}^{\varepsilon} = T$. If, up to a subsequence, one can have $\tilde{u}(T, x_{n}^{\varepsilon}, c_{n}^{\varepsilon}, l_{n}^{\varepsilon}, p_{n}^{\varepsilon}, m_{n}^{\varepsilon})) \leq \tilde{g}(T, x_{n}^{\varepsilon}, c_{n}^{\varepsilon}, l_{n}^{\varepsilon}, p_{n}^{\varepsilon}, m_{n}^{\varepsilon})$, then it follows from (\ref{comparaison_boundaryT}) and Condition (iv) that,
			\[
            	\tilde{u}(T, x_{n}^{\varepsilon}, c_{n}^{\varepsilon}, l_{n}^{\varepsilon}, p_{n}^{\varepsilon}, m_{n}^{\varepsilon})) - \tilde{v}^{\lambda}(T, y_{n}^{\varepsilon}, c_{n}^{\varepsilon}, l_{n}^{\varepsilon}, q_{n}^{\varepsilon}, m_{n}^{\varepsilon}) \leq \tilde{g}(T, x_{n}^{\varepsilon}, c_{n}^{\varepsilon}, l_{n}^{\varepsilon}, p_{n}^{\varepsilon}, m_{n}^{\varepsilon}) - \tilde{g}(T, y_{n}^{\varepsilon}, c_{n}^{\varepsilon}, l_{n}^{\varepsilon}, q_{n}^{\varepsilon}, m_{n}^{\varepsilon}).
               \]
               Hence,
               	\[
                	\Gamma_{\varepsilon}(t_{\varepsilon}, x_{\varepsilon}, x_{\varepsilon}, c_{\varepsilon}, l_{\varepsilon}, p_{\varepsilon}, p_{\varepsilon}, m_{\varepsilon}) \leq \tilde{g}(T, x_{n}^{\varepsilon}, c_{n}^{\varepsilon}, l_{n}^{\varepsilon}, p_{n}^{\varepsilon}, m_{n}^{\varepsilon}) - \tilde{g}(T, y_{n}^{\varepsilon}, c_{n}^{\varepsilon}, l_{n}^{\varepsilon}, q_{n}^{\varepsilon}, m_{n}^{\varepsilon}),
                    \]
                    and (\ref{comparison_convergence}) with (\ref{comparison_gamma}) leads to $\sup_{\mathbf{D}}(\tilde{u} - \tilde{v}^{\lambda}) \leq 0$, a contradiction. If, up to a subsequence, $\tilde{u}(T, x_{n}^{\varepsilon}, c_{n}^{\varepsilon}, l_{n}^{\varepsilon}, p_{n}^{\varepsilon}, m_{n}^{\varepsilon}) \leq \mathcal{K}\tilde{g}(T,  x_{n}^{\varepsilon}, c_{n}^{\varepsilon}, l_{n}^{\varepsilon}, p_{n}^{\varepsilon}, m_{n}^{\varepsilon})$, by (\ref{comparaison_boundaryT}) and Condition (iv),
                    \[
                    \tilde{u}(T, x_{n}^{\varepsilon}, c_{n}^{\varepsilon}, l_{n}^{\varepsilon}, p_{n}^{\varepsilon}, m_{n}^{\varepsilon}) - \tilde{v}^{\lambda}(T, y_{n}^{\varepsilon}, c_{n}^{\varepsilon}, l_{n}^{\varepsilon}, q_{n}^{\varepsilon}, m_{n}^{\varepsilon}) \leq \mathcal{K}\tilde{g}(T, x_{n}^{\varepsilon}, c_{n}^{\varepsilon}, l_{n}^{\varepsilon}, p_{n}^{\varepsilon}, m_{n}^{\varepsilon}) - \mathcal{K}\tilde{g}(T, y_{n}^{\varepsilon}, c_{n}^{\varepsilon}, l_{n}^{\varepsilon}, q_{n}^{\varepsilon}, m_{n}^{\varepsilon}).
                    \]
                    Hence,
                    \[
                    	\Gamma_{\varepsilon}(t_{\varepsilon}, x_{\varepsilon}, x_{\varepsilon}, c_{\varepsilon}, l_{\varepsilon}, p_{\varepsilon}, p_{\varepsilon}, m_{\varepsilon}) \leq \mathcal{K}\tilde{g}(T, x_{n}^{\varepsilon}, c_{n}^{\varepsilon}, l_{n}^{\varepsilon}, p_{n}^{\varepsilon}, m_{n}^{\varepsilon}) - \mathcal{K}\tilde{g}(T, y_{n}^{\varepsilon}, c_{n}^{\varepsilon}, l_{n}^{\varepsilon}, q_{n}^{\varepsilon}, m_{n}^{\varepsilon}),
					\]
                        and combining Assumption \ref{K_continuity} with (\ref{comparison_convergence}) and (\ref{comparison_gamma}) leads to $\sup_{\mathbf{D}}(\tilde{u} - \tilde{v}^{\lambda}) \leq 0$, the same contradiction.

Step 5. In view of step 2, 3, 4, one can assume that $t_{n}^{\varepsilon} < T$\bru{,} $(\tilde{u} - \mathcal{K}\tilde{u})(t_{n}^{\varepsilon}, x_{n}^{\varepsilon}, c_{n}^{\varepsilon}, l_{n}^{\varepsilon}, p_{n}^{\varepsilon}, m_{n}^{\varepsilon}) > 0$ and $(c_{n}^{\varepsilon}, l_{n}^{\varepsilon}) \in \mathbf{C}\mathbf{L}_{\mathbf{J}^{\varepsilon}}$ for all $n \geq 1$. Using Ishii's Lemma and following standard arguments, see Theorem 8.3 and the discussion after Theorem 3.2 in \cite{crandall1992user}, we deduce from the sub- and supersolution viscosity solutions property of $\tilde{u}$ and $\tilde{v}^{\lambda}$, and the Lipschitz continuity assumptions on $\mu, \sigma$ and $\beta$, that
	\[
    	\begin{aligned}
    	&\varrho\left(\tilde{u}(t_{n}^{\varepsilon}, x_{n}^{\varepsilon}, c_{n}^{\varepsilon}, l_{n}^{\varepsilon}, p_{n}^{\varepsilon}, m_{n}^{\varepsilon}) - \tilde{v}^{\lambda}(t_{n}^{\varepsilon}, y_{n}^{\varepsilon}, c_{n}^{\varepsilon}, l_{n}^{\varepsilon}, q_{n}^{\varepsilon}, m_{n}^{\varepsilon})\right) \leq \\ &C\left(n(\|x_{n}^{\varepsilon} - y_{n}^{\varepsilon}\|^{2} + \|p_{n}^{\varepsilon} - q_{n}^{\varepsilon}\|^{2}) + \varepsilon(1+\|x_{n}^{\varepsilon}\|^{2}+\|y_{n}^{\varepsilon}\|^{2}+\|p_{n}^{\varepsilon}\|^{2}+\|q_{n}^{\varepsilon}\|^{2})\right),
        \end{aligned}
    \]
for some $C > 0$, independent of $n$ and $\varepsilon$. In view of (\ref{comparison_defgamma}) and (\ref{comparison_convergence}), we get
		\begin{equation}\label{comparison_ishii}
        	\varrho\Gamma_{\varepsilon}(\hat{t}_{\varepsilon}, \hat{x}_{\varepsilon}, \hat{x}_{\varepsilon}, \hat{c}_{\varepsilon}, \hat{l}_{\varepsilon}, \hat{p}_{\varepsilon}, \hat{p}_{\varepsilon}, \hat{m}_{\varepsilon}) \leq 2 C \varepsilon\left(1+\|\hat{x}_{\varepsilon}\|^{2}+\|\hat{p}_{\varepsilon}\|^{2}\right).
        \end{equation}
We shall prove in next step that the right-hand side of (\ref{comparison_ishii}) goes to 0 as $\varepsilon \rightarrow 0$, up to a subsequence. Combined with (\ref{comparison_gamma}), this leads to a contradiction of (\ref{comparison_absurde}).

Step 6. We conclude the proof by proving the claim used above. First note that we can always construct a sequence $(\tilde{t}_{\varepsilon}, \tilde{x}_{\varepsilon}, \tilde{c}_{\varepsilon}, \tilde{l}_{\varepsilon}, \tilde{p}_{\varepsilon}, \tilde{m}_{\varepsilon})_{\varepsilon})_{\varepsilon > 0}$ such that
	\[
    \begin{aligned}
    	\Gamma_{\varepsilon}(\tilde{t}_{\varepsilon}, \tilde{x}_{\varepsilon}, \tilde{x}_{\varepsilon}, \tilde{c}_{\varepsilon}, \tilde{l}_{\varepsilon}, \tilde{p}_{\varepsilon}, \tilde{p}_{\varepsilon}, \tilde{m}_{\varepsilon}) \rightarrow \sup_{\mathbf{D}}(\tilde{u} - \tilde{v}^{\lambda}) \ \text{ and } \\ 
        \varepsilon\left(\|\tilde{x}_{\varepsilon}\|^{2} + \mathfrak{d}(\tilde{c}_{\varepsilon}, \tilde{l}_{\varepsilon}) + \|\tilde{p}_{\varepsilon}\|^{2} + d_{\mathbf{M}}(\tilde{m}_{\varepsilon})\right) \rightarrow 0 \text{ \ \ as } \varepsilon \rightarrow 0.
        \end{aligned}
        \]
By (\ref{comparison_convergence}), $\Gamma_{\varepsilon}(\tilde{t}_{\varepsilon}, \tilde{x}_{\varepsilon}, \tilde{x}_{\varepsilon}, \tilde{c}_{\varepsilon}, \tilde{l}_{\varepsilon}, \tilde{p}_{\varepsilon}, \tilde{p}_{\varepsilon}, \tilde{m}_{\varepsilon}) \leq \Gamma_{\varepsilon}(\hat{t}_{\varepsilon}, \hat{x}_{\varepsilon}, \hat{x}_{\varepsilon}, \hat{c}_{\varepsilon}, \hat{l}_{\varepsilon}, \hat{p}_{\varepsilon}, \hat{p}_{\varepsilon}, \hat{m}_{\varepsilon})$. Hence, $\sup_{\mathbf{D}}(\tilde{u} - \tilde{v}^{\lambda}) \leq \sup_{\mathbf{D}}(\tilde{u} - \tilde{v}^{\lambda}) - 2\liminf_{\varepsilon \rightarrow 0}\varepsilon\left(\|\hat{x}_{\varepsilon}\|^{2}+\|\hat{p}_{\varepsilon}\|^{2}\right)$.

\end{proof}

\section{Numerical Scheme}\label{cat_numerique}

We let $h_{\circ}$ be a time-discretization step such that both $T/h_{\circ}$ and $\ell/h_{\circ}$ are an integer. In order to ensure the existence of such a $h_{\circ}$, we shall assume that $(T/\ell) \in \mathbb{Q}_{+}^{*}$ which does not appear as a restriction from a practical point of view. We set $\mathbf{T}^{h_{\circ}} := \{t_{i}^{h_{\circ}} := ih_{\circ}, i \leq T/h_{\circ}\}$ and, for $\mathbf{J} \in \mathcal{P}(\mathbf{K})$, we set $\mathbf{L}_{\mathbf{J}}^{h_{\circ}} = \prod_{j=1}^{\kappa}(\partial \mathbf{1}_{\mathbf{J}^{c}}(j) + \mathbf{L}^{h_{\circ}}\mathbf{1}_{\mathbf{J}}(j))$ in which $\mathbf{L}^{h_{\circ}} := \{l^{h_{\circ}}_{i} := ih_{\circ}, i < \ell/h_{\circ}\}$.

The space $\mathbb{R}^{d}$ is discretized with a space step $h_{\star}$ on a rectangle $[-c, c]^{d}$ containing $N_{h_{\star}}^{x}$ points on each direction. The corresponding set is denoted by $\mathbf{X}_{c}^{h_{\star}}$. Recall that $\mathbf{P}$ is a subset of $\mathbb{R}^{d}$. We again discretise $\mathbb{R}^{d}$ with the same step space $h_{\star}$ on a rectangle $[-c, c]^{d}$ containing $N_{h_{\star}}^{p}$ points. The corresponding set is denoted by $\mathbf{P}_{c}^{\circ, h_{\star}}$, thus, the discretization of $\mathbf{P}$ is $\mathbf{P}_{c}^{h_{\star}} := \mathbf{P}_{c}^{\circ, h_{\star}} \cap \mathbf{P}$.

We set $h = (h_{\circ}, h_{\star})$. The first order derivatives $(\partial_{t}\varphi)$, $(\partial_{x_{i}}\varphi)_{i \leq d}$, $(\partial_{l_{i}}\varphi)_{i \leq \kappa}$ and $(\partial_{p_{i}}\varphi)_{i \leq d}$ are approximated by using the standard up-wind approximations:

\[
\begin{aligned}
    \Delta_{i}^{h_{\circ}, t} \varphi(z, p, m) &:= h_{\circ}^{-1}\left(\varphi(t + h_{\circ}, \cdot) - \varphi\right) \\
    \Delta_{i}^{h_{\star}, x} \varphi(z, p, m) &:= \left\{
    \begin{array}{ll}
        h_{\star}^{-1}\left(\varphi(\cdot, x + e_{i}h_{\star}, \cdot) - \varphi\right) & \mbox{if } \mu_{i} + \overline{C} \geq 0 \\
        h_{\star}^{-1}\left(\varphi - \varphi(\cdot, x - e_{i}h_{\star}, \cdot)\right) & \mbox{else}
    \end{array}
\right. \\
	\Delta_{i}^{h_{\star}, \ell} \varphi(z, p, m) &:= \left\{
    \begin{array}{ll}
        h_{\star}^{-1}\left(\varphi(\cdot, l + e_{i}h_{\star}, \cdot) - \varphi\right) & \mbox{if } i \in \mathbf{J} \\
        0 & \mbox{else}
    \end{array}
\right. \\
	\Delta_{i}^{h_{\star}, p} \varphi(z, p, m) &:= \left\{
    \begin{array}{ll}
        h_{\star}^{-1}\left(\varphi(\cdot, p + e_{i}h_{\star}, \cdot) - \varphi\right) & \mbox{if } h_{1} \geq 0 \\
        h_{\star}^{-1}\left(\varphi - \varphi(\cdot, p - e_{i}h_{\star}, \cdot)\right) & \mbox{else}
    \end{array}
\right. \\
\end{aligned}
\]

in which $e_{i}$ is $i-th$ unit vector of $\mathbb{R}^{d}$. 

We shall assume that $\mathbf{A}$ is finite. We introduce:
\[
\mathbf{C}^{h_{\star}}_{\mathbf{J}} := \prod_{j=1}^{\kappa}(\partial \mathbf{1}_{\mathbf{J}^{c}}(j) +
(\mathbf{X}_{c}^{h_{\star}} \times \mathbf{R}_{c}^{h_{\star}} \times \mathbf{A})\mathbf{1}_{\mathbf{J}}(j)),
\]
in which $\mathbf{R}_{c}^{h_{\star}} := \{ih_{\star}  :  -c/h_{\star} \leq i \leq c/h_{\star}\} $. 

Then, the discrete counter-part of the set of policies running in indexes $\mathbf{J}$ is defined by
\[
	\mathbf{CL}_{\mathbf{J}}^{h} := \mathbf{C}^{h_{\star}}_{\mathbf{J}} \times \mathbf{L}_{\mathbf{J}}^{h_{\circ}}. 
\]

We introduce:
\[
\overline{\Lambda}[h_{\circ}](t, p) = h_{\circ}^{-1}\int_{t}^{t+h_{\circ}}\int_{U^{\lambda}}\Lambda(s, \lambda)dm^{\lambda}(\lambda)ds,
\]

in which $m^{\lambda}$ is completely determined by $p$, recall Assumption \ref{m_lambda}.

Note that, for $u \in U^{\gamma}$, we may have $x + \beta(\cdot, u) + \mathfrak{F}(\cdot ; u) \not\in \mathbf{X}_{c_{x}}^{h_{\star}}$. One needs to approximate $\varphi$ with the \textit{closest} points in $\mathbf{X}_{c_{x}}^{h_{\star}}$. We have the same issue with $\mathbf{P}_{c_{p}}^{h_{\star}}$. We define $[\varphi]_{h_{\star}}$ as an approximation of $\varphi$ by
	\[
    	[\varphi]_{h_{\star}} = \sum_{(x', p') \in C_{h_{\star}}(x) \times C_{h_{\star}}(p)} \omega(x', p' \mid x, p)\varphi(\cdot, x', \cdot,  p', \cdot).
    \]
in which $C_{h_{\star}}(x)$ (resp. $C_{h_{\star}}(p)$) denotes the corners of the cube of $\mathbb{R}^{d}$ (resp. $\mathbb{R}^{d}$) in which $x$ (resp. $p$) belongs too and $\omega(\cdot \mid x, p)$ is a weight function.

Moreover, in order to integrate the boundary condition when $l_{j} \rightarrow \ell$ for some $j \in \mathbf{J}$, we define $\overline{\mathbf{L}}^{h_{\circ}} = \mathbf{L}^{h_{\circ}} \cup \ell$ and $\mathbf{L}_{\mathbf{J}}^{h_{\circ}} = \prod_{j=1}^{\kappa}(\partial \mathbf{1}_{\mathbf{J}^{c}}(j) + \mathbf{L}^{h_{\circ}}\mathbf{1}_{\mathbf{J}}(j))$. We introduce

	\[
    	[\varphi]^{\ell}(\cdot, c, l, \cdot) = \varphi(\cdot, \mathfrak{C}_{-}^{\ell}(c, l), \cdot), \ \ (c, l) \in \mathbf{C}^{h_{\star}}_{\mathbf{J}} \times \overline{\mathbf{L}}_{\mathbf{J}}^{h_{\circ}}.
    \]
    
    And finally,
    
    \[
    	[\varphi]_{h_{\star}}^{\ell} = [[\varphi]^{\ell}]_{h_{\star}}.
    \]

The discrete counterpart of $\mathcal{L}_{*}^{\mathbf{J}}$ for all $\mathbf{J} \in \mathcal{P}(\mathbf{K})$ is

\begin{equation}\label{Lh}
		\begin{aligned}
    		\mathcal{L}_{h}^{\mathbf{J}}\varphi := \ &  \Delta_{i}^{h_{\circ}, t} [\varphi]^{\ell} + \sum_{1 \leq i \leq d} \mu^{i}\Delta_{i}^{h_{\star}, x}[\varphi]^{\ell}  + \sum_{i \in \mathbf{J}} \Delta_{i}^{h_{\star}, \ell}[\varphi]^{\ell}  + \sum_{1 \leq i \leq d} h_{1} \Delta_{i}^{h_{\star}, p}[\varphi]^{\ell} \\
            &+ \overline{\Lambda}[h_{\circ}]\int_{U^{\gamma}}\int_{\mathbb{R}^{d}}\left[\mathcal{I}\left[[\varphi]_{h_{\star}}^{\ell}, u\right](t+h_{\circ}, \cdot) - \varphi\right]\Upsilon(\gamma, du)dm^{\gamma}(\gamma).
        \end{aligned}
\end{equation}
If $\Upsilon(\gamma, du)$ corresponds to a discret distribution, the corresponding integral is explicit. In the examples in next section, we shall use a discrete approximation.

For the sequel, we set $\phi^{\circ} \in \Phi^{z, \overline{m}}_{\kappa}$ a control such that $\tau_{1}^{\phi^{\circ}} > T$ a.s. and $\phi^{a} \in \Phi^{z, \overline{m}}_{\kappa}$ a control such that $\tau_{1}^{\phi^{a}} = t$ a.s. and $\tau_{2}^{\phi^{a}} > T$ a.s. for $a \in \mathbf{A}$. Thus, the discrete counterpart of $\mathcal{K}$ is

\begin{equation}\label{cat Kh}
	\mathcal{K}^{h}\varphi := \sup_{a \in \mathbf{A}}\mathbb{E}_{\overline{m}}\left[[\varphi]_{h_{\star}}^{\ell}(Z_{t+h_{\circ}}^{z, \phi^{a}}, P_{t+h_{\circ}}^{t, p}, M^{z, m, \phi^{a}}_{t+h_{\circ}})\right].	
\end{equation}

  We set $\mathring{\mathbf{X}}_{c_{x}}^{h_{\star}} := (\mathbf{X}_{c_{x}}^{h_{\star}} \backslash \partial\mathbf{X}_{c_{x}}^{h_{\star}})$,  and $\mathring{\mathbf{P}}_{c_{p}}^{h_{\star}} := (\mathbf{P}_{c_{p}}^{h_{\star}} \backslash \partial\mathbf{P}_{c_{p}}^{\circ, h_{\star}})$. Our numerical scheme consists in solving, for all $\mathbf{J} \in \mathcal{P}(\mathbf{K})$:

	\begin{align}
        0 = & \mathbf{1}_{\{\mathbf{J} = \mathbf{K}\}}\left[-\mathcal{L}_{h}^{\mathbf{J}}\varphi\right] +  \mathbf{1}_{\{\mathbf{J} \not= \mathbf{K}\}}\min\left\{-\mathcal{L}_{h}^{\mathbf{J}}\varphi \, , \, \varphi - \mathcal{K}^{h}\varphi\right\} & \text{on } (\mathbf{T}^{h_{\circ}} \backslash T) \times  \mathring{\mathbf{X}}_{c_{x}}^{h_{\star}} \times \mathbf{CL}_{\mathbf{J}}^{h} \times \mathring{\mathbf{P}}_{c_{p}}^{h_{\star}} \times \mathbf{M} \label{num_interieur} \\
         \varphi = & g\mathbf{1}_{\{\mathbf{J} = \mathbf{K}\}} + \left(g \vee \mathcal{K}[g]_{h_{\star}}\right)\mathbf{1}_{\{\mathbf{J} \not= \mathbf{K}\}} & \text{on }  \{ T \} \times  \mathring{\mathbf{X}}_{c_{x}}^{h_{\star}} \times \mathbf{CL}_{\mathbf{J}}^{h} \times \mathring{\mathbf{P}}_{c_{p}}^{h_{\star}} \times \mathbf{M} \label{num_T} \\
       \varphi = & g & \text{on } \mathbf{T}^{h_{\circ}} \times \partial\mathbf{X}_{c}^{h_{\star}} \times \mathbf{CL}_{\mathbf{J}}^{h} \times \mathring{\mathbf{P}}_{c}^{h_{\star}} \times \mathbf{M} \label{num_bord}
	\end{align}
    
   \begin{proposition}
   	Let $\vr^{c}_{h}$ denote the solution of (\ref{num_interieur})-(\ref{num_T})-(\ref{num_bord}). Then $\vr_{h}^{c} \rightarrow \vr$ when $(h_{\star}, h_{\circ}/h_{\star}) \rightarrow 0$ and $c \rightarrow +\infty$.
   \end{proposition}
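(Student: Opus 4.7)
The plan is to apply the Barles--Souganidis framework: establish monotonicity, stability and consistency of the scheme (\ref{num_interieur})--(\ref{num_T})--(\ref{num_bord}), then use half-relaxed limits together with the comparison principle (Assumption~\ref{hypcomparaison}) and the uniqueness result of Theorem~\ref{viscosity} to conclude that the limit is $\vr$.

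First I would verify \textbf{stability} and \textbf{monotonicity}. Since $g$ is bounded, a backward induction on $\mathbf{T}^{h_\circ}$, starting from the terminal condition (\ref{num_T}) and using the convex-combination structure of $\mathcal{L}_h^{\mathbf{J}}$ and $\mathcal{K}^h$ (the latter being a sup of expectations), shows that $\|\vr_h^c\|_\infty \leq \|g\|_\infty$ uniformly in $(h,c)$. Monotonicity of the scheme follows from the upwind choice in the finite-difference operators $\Delta_i^{h_\star,x}$ and $\Delta_i^{h_\star,p}$, from the positivity of the weights $\omega(\cdot \mid x,p)$ used in $[\cdot]_{h_\star}$, and from the standard CFL-type restriction implicit in the regime $h_\circ/h_\star \to 0$; the obstacle part $\varphi - \mathcal{K}^h\varphi$ and the Dirichlet-type relation on $\partial \mathbf{X}_c^{h_\star}$ are trivially monotone.

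Second, \textbf{consistency}. For any $\varphi \in \mathscr{C}^{1}_{\mathbf{J}}$ and any $(z,p,m) \in \mathbf{D}_{\mathbf{J}}$, I would Taylor-expand the differential part of $\mathcal{L}_h^{\mathbf{J}}\varphi$ and use continuity of $\mu, \overline{C}, h_1$ to recover $\partial_t\varphi + \langle\mu+\sum_j \mathbf{1}_{\mathbf{J}}(j)\overline{C},D\varphi\rangle + \langle\mathbf{1}_{\mathbf{J}},D_l\varphi\rangle + \langle h_1, D_p\varphi\rangle$. For the nonlocal part, $\overline\Lambda[h_\circ](t,p) \to \overline\Lambda(t,p)$ by Assumption~\ref{AL_continuity}, and the inner integral against $\Upsilon(\gamma,du)dm^\gamma(\gamma)$ converges by the continuity of the stochastic kernel $\gamma \mapsto \Upsilon(\gamma,du)$ together with Proposition~7.30 in Bertsekas--Shreve, exactly as in the proof of Lemma~\ref{L_continuity}. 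The interpolation operator $[\cdot]_{h_\star}$ commutes with the limit since $\varphi$ is $C^1$ in $(x,p)$, and the boundary operator $[\cdot]^\ell$ implements the boundary condition (\ref{bordl}) in the limit. Finally, $\mathcal{K}^h \varphi \to \mathcal{K}\varphi$ follows from Assumption~\ref{K_continuity} and the stochastic continuity of $(X,P,M)$ under $\phi^a$.

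Third, I would define the half-relaxed limits
\[
\overline{v}(z,p,m) := \limsup_{\substack{(z',p',m') \to (z,p,m) \\ h \to 0,\, c \to +\infty}} \vr_h^{c}(z',p',m'),\quad
\underline{v}(z,p,m) := \liminf_{\substack{(z',p',m') \to (z,p,m) \\ h \to 0,\, c \to +\infty}} \vr_h^{c}(z',p',m'),
\]
and, along the classical lines, at a strict local extremum of $\overline{v}-\varphi$ (resp.\ $\underline{v}-\varphi$) with $\varphi \in \mathscr{C}^{1}_{\mathbf{J}}$, extract a sequence of local extrema of $\vr_h^c - \varphi$, plug into the scheme, and pass to the limit using consistency. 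This yields that $\overline{v}$ is a viscosity sub-solution and $\underline{v}$ is a viscosity super-solution of (\ref{interieur})--(\ref{bordT})--(\ref{bordl}). Conclusion: Assumption~\ref{hypcomparaison} gives $\overline{v}\leq \underline{v}$, hence equality, and Theorem~\ref{viscosity} identifies this common value with $\vr$, proving locally uniform convergence.

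The main obstacle is the $c \to +\infty$ limit: because the discrete problem imposes the crude Dirichlet-type value $\vr=g$ on $\partial \mathbf{X}_c^{h_\star}$, one must control the pollution from this artificial boundary. The key step is to show that, uniformly in $(h,c)$ and for each fixed initial $(z,p,m)$, the probability that the controlled $X^{z,\phi}$ reaches $\partial[-c,c]^d$ before $T$ tends to $0$ as $c\to\infty$; this follows from the linear-growth / Lipschitz hypotheses on $\mu$, $\beta$, $H$ and $F$ together with the finite activity of $N$ on $[0,T]$, which yield moment bounds of the form $\mathbb{E}_{\overline m}[\sup_{s\leq T}\|X_s^{z,\phi}\|^2] \leq C(1+\|x\|^2)$ independent of $\phi$. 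A secondary difficulty is the boundary condition (\ref{bordl}): one must verify that the discrete implementation $[\cdot]^\ell$, composed with the projection $\mathfrak{C}^\ell_-$, does produce the correct relaxed limit at $l_j \to \ell$, which amounts to checking that the finite-difference scheme together with the interpolation is consistent with the boundary operator featured in Definition of the viscosity solution. Both obstacles are handled by standard upwind/stability arguments, but require some care because of the hybrid continuous/jump structure of the state $(X,C,L,P,M)$.
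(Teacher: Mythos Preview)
Your proposal is correct and follows essentially the same approach as the paper: the paper's proof is a one-line invocation of the Barles--Souganidis convergence theorem \cite{barles1991convergence}, deferring the verification of monotonicity, stability and consistency to \cite{baradel2016optimalMML}. Your write-up simply spells out those verifications in more detail than the paper does, but the strategy is identical.
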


   \begin{proof}
   	We check that the conditions of \cite[Theorem 2.1.]{barles1991convergence} are satisfied as in \cite{baradel2016optimalMML}.
   \end{proof}

It remains to explain how to deduce the $\varepsilon$-optimal policy. At each point $(z, p, m)$ of the grid, if the number of running CAT bonds is $\kappa-1$ or less (recall that $\kappa$ is the maximum number of running CAT bonds), one computes
\[
\widehat{a}(z, p, m) \in \argmax_{a \in \mathbf{A}}\mathbb{E}_{\overline{m}}\left[[\vr^{c}]_{h}^{\ell}(Z_{t+h_{\circ}}^{z, \phi^{a}}, P_{t+h_{\circ}}^{t, p}, M^{z, m, \phi^{a}}_{t+h_{\circ}})\right]
\]
If $\vr^{c}_{h}(z, p, m)$ is equal to the above maximum, then we play the control $\widehat{a}(z, p, m)$ otherwise we wait for the next time step. This is the usual philosophy: we act on the system only if this increases the expected value.

	\section{Example: CAT bonds in a \textit{per event} framework for Hurricanes in Florida}

We focus on a simple example where the controller is an insurance or a reinsurance company which can issue CAT bonds in order to cover its risk in natural disasters. 

We consider CAT bonds of \textit{per event} type. The framework is the following:

\smallskip

\begin{itemize}
    \item The studied risk is the hurricanes ;
    \item We only consider one region : Florida ;
    \item The time-unit will be the year and we fix $\ell = 3$ which corresponds to the average maturity of CAT bonds in years ;
    \item The insurer can issue CAT bonds on different layers.
\end{itemize}

The motivation of considering hurricanes in Florida comes from the fact that this region is well exposed, about one hurricane every two years in average, see \cite{malmstadt2009florida} ; and has an important and increasing insured value about 4000 billion in 2015, see \cite{air2016}. Therefore, it has been well studied and we will take the parameters of our toy model from different papers.

\smallskip

We consider a 1-dimension random Poisson measure $N$, which represents the intensity of arrival and the severity of hurricanes. Only the intensity of arrival of the hurricanes is unknown. We use two different priors. The first one is the case with a Gamma distribution on the unknown parameter. The second one is the case with a Bernoulli distribution.

\smallskip

This leads to two different definitions of the intensity that we first explain in subsection 7.1 and 7.2. We describe in subsection 7.3 the severity of the hurricanes, which is assumed to be known. In subsection 7.4, we give the set of controls (which kind of CAT bonds the insurer can issue) and the output process (the process defined in equation (\ref{X})). In subsection 7.5 we describe the gain function and a specific dimension reduction that can be used for the numerical implementation. In subsection 7.6, we fix and explain the numerical values chosen for the parameters of the control problem. The numerical results are presented in subsection 7.7. Finally, we discuss in subsection 7.8 about the benefits and the limits in practice of this approach for a decision making process.

\subsection{Intensity of Hurricanes: the Gamma case}

We define the intensity $\Lambda$ as the function:
	\[
    	\Lambda(t, \lambda) = \lambda h(t), \ \ \ (t, \lambda) \in [0, T] \times \mathbb{R}_{+}^{*},
    \]
    
in which $h : t \mapsto h(t)$ is a positive continuous function which represents the seasonality of the arrival of hurricanes and some growth according to the global warming. The parameter $\lambda \in U^{\lambda} := \mathbb{R}_{+}^{*}$, which is unknown, represents a level of intensity.

\smallskip

We set $m_{0}^{\lambda} = \mathcal{G}(\alpha_{0}, \beta_{0})$ with $(\alpha_{0}, \beta_{0}) \in (\mathbb{R}_{+}^{*})^{2}$ as an initial prior on $\lambda$. Thus, by Example \ref{gamma}, we deduce that the process $M^{t, m^{\lambda}}$, starting from $m^{\lambda} := \Gamma(\alpha_{t}, \beta_{t})$ at $t \in [0, T]$, remains in the family of Gamma distributions and, for all $s \geq t$,
	\[
    	M_s = \mathcal{G}\left(\alpha_{t} + N_{s} - N_{t}, \beta_{t} + \int_{t}^{s}h(u)du\right).
    \]
Moreover, we can define two processes $P^{\alpha}$ and $P^{\beta}$ :
	\[
    	\begin{aligned}
        	P^{\alpha} &= P_{t}^{\alpha} + \int_{t}^{\cdot}dN_s, \\
            P^{\beta} &= P_{t}^{\beta} + \int_{t}^{\cdot}h(s)ds.
        \end{aligned}
    \]
and, by construction, $M = \mathcal{G}(P^{\alpha}, P^{\beta})$.

It remains to define the function $h$. The seasonality of hurricanes has been studied, especially on big Hurricanes, in \cite{parisi2000seasonality} in which the authors give a curve based on a kernel density estimation. One close parametric density function over one year can be found in the form:

	\begin{align}
    	h_{0} : [0, 1] &\rightarrow \mathbb{R}_{+} \\
        	t &\mapsto \left\{
    \begin{array}{ll}
        f_{\hat{\alpha}, \hat{\beta}}\left(\frac{t-d_{0}}{d_{1} - d_{0}}\right) & \mbox{if } t \in (d_{0}, d_{1}) \\
        0 & \mbox{else}
    \end{array}
\right.
    \end{align}

in which $f_{\hat{\alpha}, \hat{\beta}}$ is the density function of the Beta distribution of parameters $(\hat{\alpha}, \hat{\beta}) \in (\mathbb{R}_{+}^{*})^{2}$.

The Figure $\ref{seasonality}$ shows a representation of $h_{0}$ close to the one obtained in \cite{parisi2000seasonality}.

\begin{figure}[!h]
\centering
\includegraphics[scale=0.6]{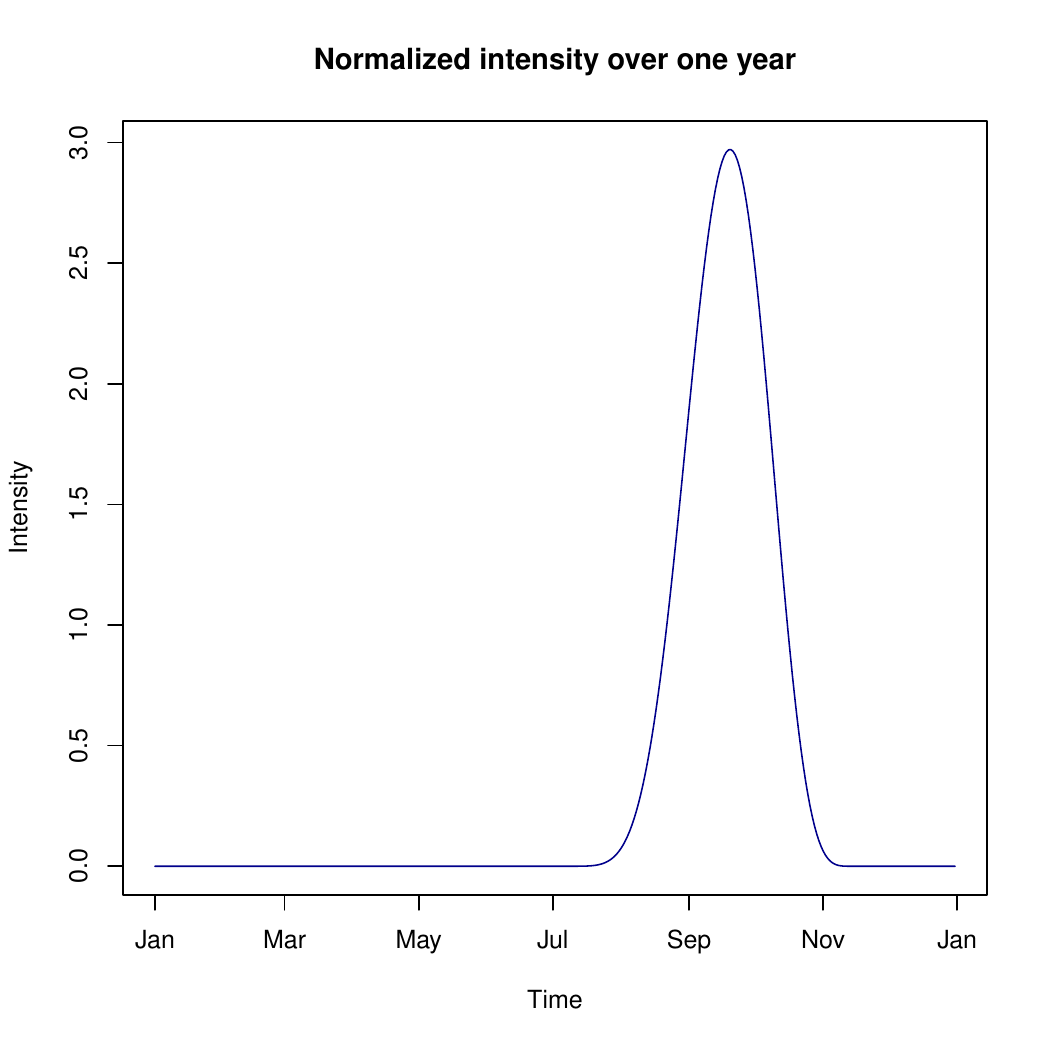}
\caption{Representation of $h_{0}$ over one year with $d_{0} = 1^{\text{st}}$ July, $d_{1} = 15^{\text{th}}$ November, $\hat{\alpha} = 8$ and $\hat{\beta} = 6$.}\label{seasonality}
\end{figure}

The function $h$ is simply defined by $h(t) := h_{0}(t - E(t))$ for $t \geq 0$, in which $E$ is the integer part function. $h$ is 1-periodic. Here we do not consider a global warming effect, which would have been deterministic through the function $h$.

\subsection{Intensity of Hurricanes: the Bernoulli case}

Although the Gamma prior gives parameters that belongs in $\mathbb{R}_{+}$, in order to remains in the Gamma distribution over time, it requires the form $(t, \lambda) \mapsto \lambda h(t)$ and then the intensity of the whole period is proportional to $\lambda$. We introduce a Bernoulli case with three alternatives in which we can choose any function depending on time with each alternative.

With $E : \mathbb{R}_{+} \mapsto \mathbb{N}$ the integer part function, we define the intensity as:
	\begin{equation}\label{dirac_intensity}
    	\Lambda(t, \lambda) = \frac{1}{2}h(t)\left(1 + \frac{E(t)}{T}\lambda\right), \ \ \ (t, \lambda) \in [0, T] \times \{\lambda_{1}, \lambda_{2}, \lambda_{3}\},
    \end{equation}
in which the parameter $\lambda \in U^{\lambda} := \{\lambda_{1}, \lambda_{2}, \lambda_{3}\} \subset \mathbb{R}_{+}$ represents 3 scenarios of the evolution of the intensity, as a consequence of the global warming.

Following Example \ref{bernoulli}, we can define 3 processes, starting from $p := (p^{1}, p^{2}, p^{3}) \in \mathbb{R}_{+}^{3}$ at time $t\in[0, T]$:
	\begin{equation}\label{dirac_prior}
    	P^{i} := p^i - \int_{t}^{\cdot} P_{s}^{i} \Lambda(s, \lambda_{i})ds + \int_{t}^{\cdot} P_{s-}^{i}\left[\Lambda(s, \lambda_{i}) - 1\right]dN_{s}, \ \ \ 1 \leq i \leq 3.
    \end{equation}

\subsection{Severity of the Hurricanes}

As in \cite{malmstadt2009florida}, we use a Generalized Pareto Distribution for the simulation of the severity of the claim, over the exposure of 4000 billion. 
Their threshold (minimum claim size) is $\mu = 0.25$ billion for an exposure of 2000 billion. Here, we shall use: $\mu = 0.5$, $\sigma = 5$ and $\xi = 0.5$. To fix ideas, the median is 4.5 billion, the quantile at 90\% is 22 billion and the quantile at 99.5\% is 132 billion. We also bound the distribution by the total exposure of 4000 billion.

\bigskip

We also introduce the so-called Occurrence Exceedance Probability (OEP) curve. To this aim, we introduce the random variable:
		\[
        	\iota_{t} := \underset{t \leq s \leq t+1}{\max}\int_{\mathbb{R}^{*}}uN(du, \{s\}),
        \]
which is the greatest Hurricane in $[t, t+1]$ for $t \in [0, T-1]$. The OEP curve is simply:
		\[
        	OEP_{\mathfrak{t}}^{t} := \inf\left\{x \in \mathbb{R} : \mathbb{P}\left(\iota_{t} \leq x\right) \geq 1-\frac{1}{\mathfrak{t}}\right\}, \ \ \mathfrak{t} \geq 0,
        \]
in which $\mathfrak{t}$ is called the \textit{Return period}. By construction,  $OEP^{t}_{\mathfrak{t}}$ is the quantile of order $1-1/\mathfrak{t}$ of $\iota_{t}$.

The Figure \ref{OEP} shows the corresponding OEP curve with the Gamma prior $(p^{\alpha}, p^{\beta}) := (25, 50)$, for any year (in this case, it does not depend on $t$, for a fixed prior).

\begin{figure}[!h]
\centering
\includegraphics[scale=0.6]{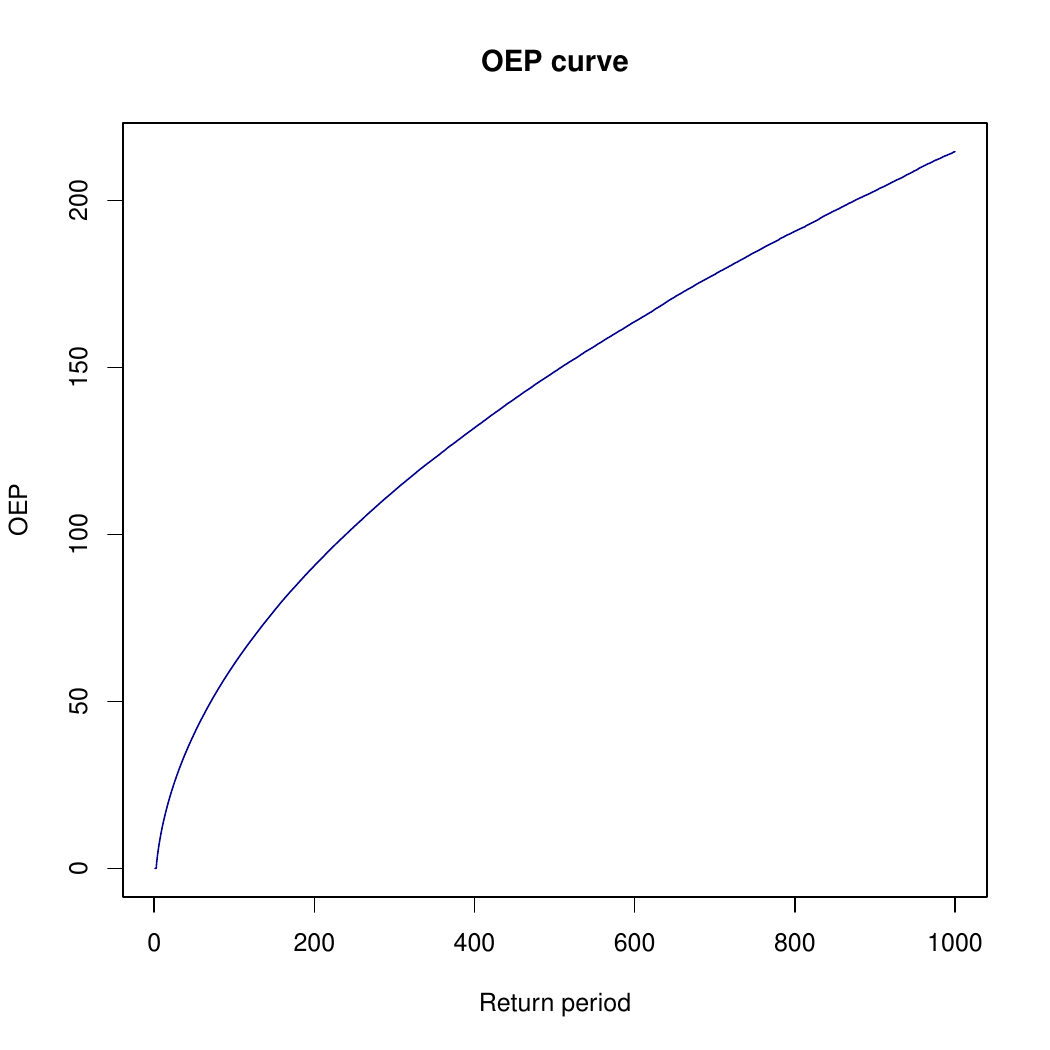}
\caption{Representation of an OEP curve, with the parameter $(\mu, \sigma, \xi)$ defined in the text and with the prior $(p^{\alpha}, p^{\beta}) := (25, 50)$.}\label{OEP}
\end{figure}

\smallskip

We now define the set of controls and the output process.

\subsection{The set of controls and the output process}

Recall that a control $\phi$ has the form $(\tau_{i}^{\phi}, k_{i}^{\phi}, n_{i}^{\phi})_{i \geq 0}$. Here, $n_{i}$ is the linked to the notional of the CAT bonds. A CAT bond covers a layer (defined hereafter) of the portfolio of the insurer. We fix $n_{i} := 1$ for simplicity, so if the insurer issue a CAT bond, the whole corresponding layer will be covered.

\smallskip

We introduce $\{K_{1}, K_{2}, K_{3}, K_{4}\} := \{10, 50, 200, 1000\}$. We define what will be the capacity of the CAT bonds: $\mathfrak{l}_{K_{j}}^{t} = OEP_{K_{j+1}}^{t} - OEP_{K_{j}}^{t}$ for $1 \leq j \leq 3$ and $t \in [0, T-1]$.

The value $k_{i}$ can be chosen in $\{K_{1},K_{2}, K_{3}\}$ and the associated sets $A_{k_{i}}$ are defined by:

	\[
    	A_{k_{i}}^{t} = [OEP_{k_{i}}^{t}, +\infty[, \ \ i \geq 1.
    \]
If a Hurricane leads to a cost in $A_{k_{i}}^{t}$, then the default of the CAT bond is activated. It remains to define the payout for the insurer in the default case. It corresponds to cover the layer $[OEP_{k_{i}}^{t}, OEP_{k_{i}}^{t} + \mathfrak{l}_{k_{i}}^{t}]$ at a ratio of $n_{i}$. We define the payout of the $j-th$ CAT bonds as:
	\[
    	F_1(t, x, n_{j}, k_{j}, l_{j}, u) := n_{j}\left[\left(u - OEP_{k_{j}}^{t-l_{j}}\right)^{+} \wedge \mathfrak{l}_{k_{j}}\right], \ \ j \in \{1, \ldots, \kappa\}.
    \]
Note that, in our example, the risk cannot be covered above the return period of 1000.

We consider the process $X := (X^{1}, X^{2})$ valued in $\mathbb{R}^{2}$. The first component represents the cash of the Insurer/Reinsurer and the second component represents the risk premium, in term of percentage of the pure premium, of the market about the CAT bonds, defined later.

\smallskip

We set, with $x:=(x^{1}, x^{2})$ :
	\[
    	\begin{aligned}
        	\mu(t, x) &= \begin{pmatrix}
        \mu + r_0 x^{1} \\ 
        -\rho x^{2} 
    \end{pmatrix}
, \\
         \beta(t, x, u) &=
    \begin{pmatrix}
        u \\ 
        \rho^{\star}(u) 
    \end{pmatrix},\\
        	H(t, x, a) &=
    \begin{pmatrix}
        -H_{0} \\ 
        0
    \end{pmatrix}, \\
         \overline{C}(t, r) &=
    \begin{pmatrix}
        \sum_{i=1}^{\kappa} r_i \\ 
        0
    \end{pmatrix}, \\
         F &=
    \begin{pmatrix}
        F_1 \\ 
        0
    \end{pmatrix},
        \end{aligned}
    \]
    
in which:

\begin{itemize}
    \item $\mu$ represents the premium rate,  the insurer is profitable if $\mu > \mathbb{E}_{\overline{m}}\left[\Lambda(t, \lambda)\right]\int_{\mathbb{R}^{*}}u\Upsilon(du)$ ;
    \item $r_0 > 0$ is the constant interest rate ;
    \item $\rho > 0$ is the speed return to 0 of the risk premium of the CAT bonds ;
    \item $\rho^{\star} : \mathbb{R} \mapsto \mathbb{R}$ is an increasing function which represents the immediate increase of the premium rate after a claim ;
    \item $H_{0} > 0$ is the initial cost of issuing a CAT bond.
\end{itemize}

How the coupon $r$ is fixed when issuing a CAT bond and is defined in subsection \ref{param_choice}.

\subsection{Gain function and dimension reduction}

The controller wants to maximize, with $c := (n, k, r)$, for some $\gamma > 0$, the criteria
	\[
    	g(x, c, l, p) := -\exp\left[-\gamma\left(x^{1} + \frac{H_{0}}{\ell}\sum_{k=1}^{\kappa}\mathbf{1}_{\{l_{k} \not= \partial\}}(\ell-l_{k})\right)\right] \vee \hat{C}.
    \]
    
    The right part inside the exponential function compensates the initial cost for remaining CAT bonds, in order to avoid particular behaviors of issuing nothing close to the end. We take $\hat{C} := -10^{300}$ which ensures that $g$ is bounded and big enough such that it will not play an essential role.
    
    \bigskip
    
	Note that in the Gamma prior case, we have $P^{\beta} = P_{t}^{\beta} + \int_{t}^{\cdot}h(s)ds$ which is a function of time with no randomness. Then, we can avoid it in the numerical scheme since it is a function of time fully characterized by the initial prior.
	
	\smallskip
    
    In the Bernoulli case, one can see that, if we set for the prior
    
    	\[
        	p' := \delta p,
        \]
        
        for some $\delta > 0$, then, for all $s \geq t$, we have $P'_{s} = \delta P_{s}$ and then $\mathcal{D}(P'_{s}) = \mathcal{D}(P_{s})$. One can normalize $P$ such that the sum is 1 and avoid the last component.
        
	\smallskip

Moreover, the associated value function satisfies:
    
    \[
    	\vr(t, x^{1}, x^{2}, c, l, p) = e^{x^{1}e^{r_0(T-t)}}\vr(t, 0, x^{2}, c, l, p),
    \]
    which avoids in computation the dimension of $x^{1}$.

\subsection{The choice of the parameters}\label{param_choice}

We choose here the form, the functions and the parameters for our toy example. We first describe the Gamma case (for the prior) and then describe the Bernoulli case.

Just after the occurrence of Katrina, the price of the reinsurance was about two or three times greater with a persistence of about two years and can be also seen on the CAT bond market, see Figure 9 in \cite{cummins2012cat}. Thus, we set
	\[
        	\rho := 2.
    \]
    Moreover, the estimated return-period of such event is about $20$-year return period, see \cite{kates2006reconstruction}. Since the increase was about two of three times greater, we set
    
    \[
                \rho^{\star}(u) := \frac{0.05}{1-F_{\mu, \sigma, \xi}(u)},
    \]
    
    in which $F_{\mu, \sigma, \xi}$ denotes the cumulative distribution function of the Pareto distribution of parameters $(\mu, \sigma, \xi)$. Then, here, for a return period of 40 years (recall that we have in average one claim each 2-year period), it gives an increase of 100\% of the price.
    
The insurer has a market share of $e_{0} \in ]0, 1]$ that we fix at 10\%. We shall assume that, the insurer is profitable until $\lambda = 0.65$. Then, the premium rate is

	\[
    	\mu := 0.65\,e_{0} \int_{\mathbb{R}^{*}}u\Upsilon(du) = 0.65\times e_{0} \times \left(\mu_{0} + \frac{\sigma_{0}}{1-\xi}\right) = 0.6825.
    \]
    
    We now define the coupon fixing. If $k_{i} = K_{j}$ with $j \in \{1, 2, 3\}$ (recall  that $k_{i}$ is the choice of the layer for the CAT bond), the coupon is:
    \begin{equation}\label{numerique_coupon}
    	r_{i} = \mathfrak{C}_{0}(\tau_{i}, X_{\tau_{i}}, \alpha_{i}, \varepsilon_{i}) := n_{i}\left[e_{0}\left(\frac{1}{K_{j+1}} + \frac12\left(\frac{1}{K_{j}} - \frac{1}{K_{j+1}}\right)\right)\right]\mathfrak{l}_{K_{j}}\left(1 + x^{2} + \varepsilon_{i}\right).
    \end{equation}

Thus, the CAT bond price is decomposed by:

\begin{itemize}
    \item The part $\frac{1}{K_{j+1}}$ which is the probability that a claim is above the layer within one year and then the payout is the layer
    \item The part $\frac{1}{K_{j}} - \frac{1}{K_{j+1}}$ which is the probability that the greatest claim is in the layer, and we multiply it by one half like if it was uniformly distributed in the layer, which is greater than the true value.
    \item The factor $x^{2}$ is the risk aversion of the market, and $\varepsilon_{i}$ is some random value about the price the coupon.
\end{itemize}

Finally, the cost of issuing a CAT bond is fixed at: $H_{0} := 0.0025$, the interest rate is fixed at $r_0 := 1\%$.

\begin{remark}
In these examples, we deal with \textit{per event} CAT bonds. One also can deal with aggregated losses within the period. In this case, it requires to record the current accumulation of claims and to introduce another dimension in the output process $X$.
\end{remark}

\begin{remark}
	In practice, in general, a partial default below 70\%-80\% of the capacity does not end the CAT bonds: the coupon is reduced by the proportional loss and another loss may lead to the complete default, using the same limits. Here, for simplification, the CAT bond ends whenever the layer is attained.
\end{remark}

\begin{remark}
Note that the function $\Psi(x, c, l, p) := \frac{\mu}{r_0} + x^{1} + \delta$ satisfies the conditions of Proposition \ref{cat_comparaison}, for $\delta > 0$ great enough.
\end{remark}

\subsubsection{With the Bernoulli prior}

In this case, the intensity grows over time, recall (\ref{dirac_intensity}). We fix $\lambda_{1} = 0.2, \lambda_{2} = 0.3, \lambda_{3} = 0.4$ and $P_{0}^{1} = P_{0}^{2} = P_{0}^{3} = \frac{1}{3}$, recall (\ref{dirac_prior}).

To be consistent, we say that the premium rate also rises over time following the rise of intensity, but by 35\%, and then is:
\[
    \mu(t, x) = \begin{pmatrix}\mu\left(1 + 0.35\frac{t}{T}\right) + r x^1 \\ -\rho x^2\end{pmatrix}, \ \ (t, x) \in [0, T] \times \mathbb{R}^{2}.
\]
We assume that the market is updating the OEP with the same rate:

    \[
        OEP^{t} := OEP^{0}\left(1 + 0.35\frac{t}{T}\right), \ \ t \in [0, T].
    \]

\subsection{Results}

Recall that, for each CAT bond that the insurer can issue, we need to add its characteristics and the complexity increases hugely in $\kappa$, depending on possible policies. Thus, in our simulation, we use $\kappa = 2$. The controller can choose at most 2 layers among the three available (recall them in term of return periods: [10, 50], [50, 200] and [200, 1000] which corresponds to [1.23, 4], [4, 9], and [9, 21.5] in billion dollars).

\subsubsection{With the Gamma prior}

In Figure \ref{hugeclaim}, we provide a simulated path of the optimal strategy in which the Pareto distribution is discretized in 2500 points (the highest possible value is 49 billion dollars). The top left graphic describes the control played by the insurer. The top part represents the issue of CAT bonds, the level is the lower bound of the layer. The bottom part represents the running CAT bonds with respect to the layer. The double dash says that two CAT bonds at the same layer are running. The top right graphic describes the arrival of natural disasters. The bottom part gives the size of the claim of the insurer while the top part gives the payoff of the CAT bond(s). The middle left graphic describes the evolution of the cash of the insurer. The middle right graphic gives the evolution of $X^{2}$, the price penalty of the CAT bonds which appears in (\ref{numerique_coupon}). The bottom left graphic gives the evolution of the mean of the estimated distribution of $\lambda_{0}$ (true value is 0.6), defined by $\frac{P^{\alpha}}{P^{\beta}}$, and the bottom right graphic gives the evolution of the standard deviation, defined by $\frac{\sqrt{P^{\alpha}}}{P^{\beta}}$.

\begin{figure}
\centering\includegraphics[scale = 0.7]{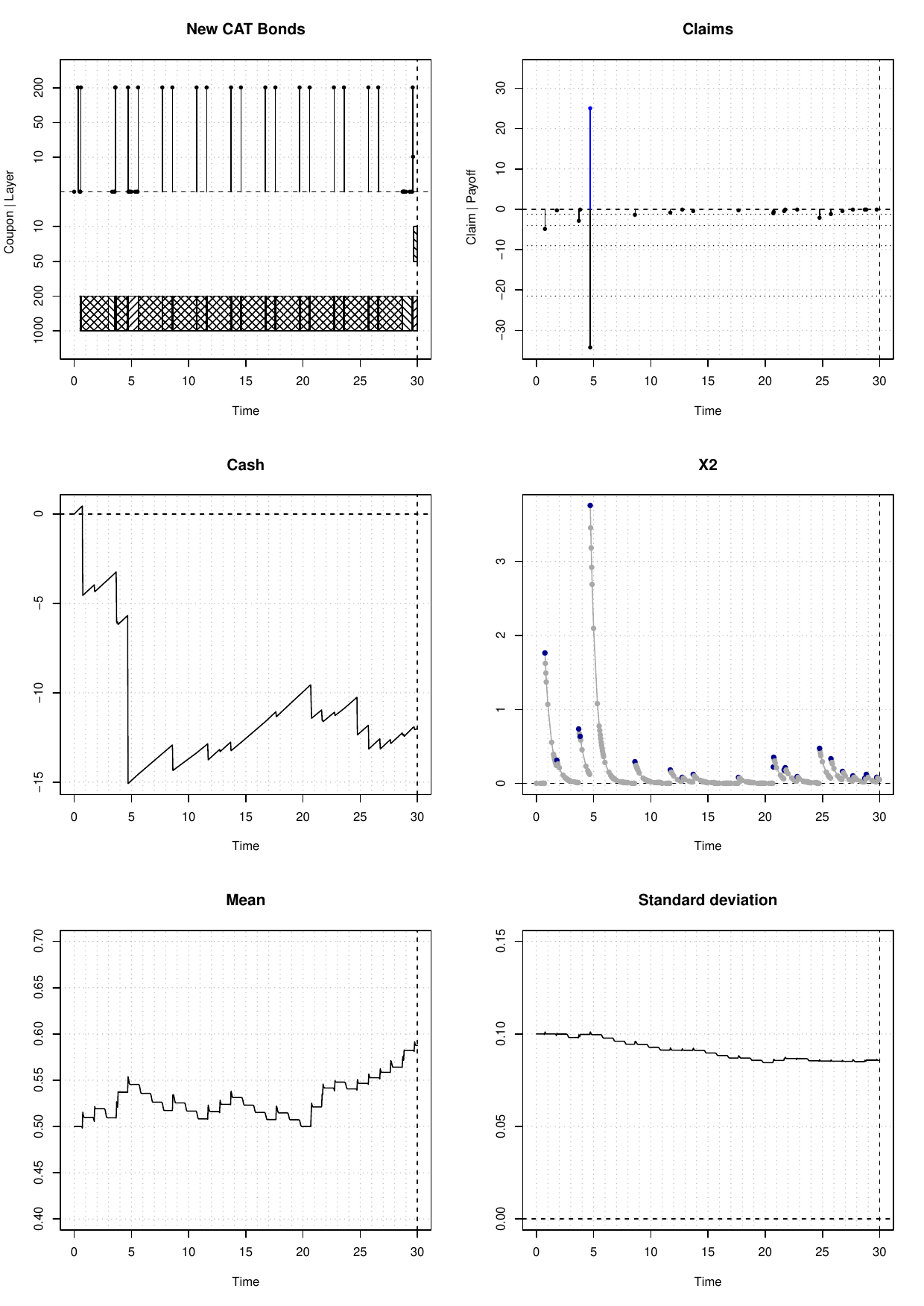}
\caption{Simulated path of the optimal strategy of the insurer (true value is $\lambda_0 = 0.6$).}\label{hugeclaim}
\end{figure}

\smallskip

At the beginning, the insurer does not issue any CAT bond. Since we start in January, there is no risk to experiment a claim and thus the insurer delays the issue. Just when the season starts, he first chooses to issue two CAT bonds on the layer $[200, 1000]$. Recall that it is the highest layer which corresponds to $[9, 21.5]$ in billion dollars. It is possible to have a claim highly above the layer and having a double cover on this big layer gives, indirectly, a cover against huge claims above the layer (recall that the maximum claim size is 49 billion dollars). He renews each CAT bond at the maturity until he meets a claim with a return period above 1000 during the $5^{\text{th}}$ year. He gets the associated payoff. Despite the huge increase of the price of CAT bonds, by almost $400\%$, he immediately issues a new one on the layer [200, 1000], but only one. He waits the next season, with a better expected price, to issue the other one. After, he follows this strategy to the end.

In Figure \ref{finalcash}, we represent the approximated density (by kernel estimation) of the total cash of the insurer at the end of the 30 years. On the left, it is the case with $\lambda_{0} = 0.6$ (the value used in the simulated path of Figure \ref{hugeclaim}) and on the right with $\lambda_{0} = 0.5$, i.e. what believes the insurer at the beginning. The solid curve is the case when the insurer plays the optimal control and the dashed curve is when he never issues any CAT bond. We also add the quantiles at 99.5\% in term of losses, see the legend. In the case with $\lambda_{0} = 0.6$ (left), from which the path in Figure \ref{hugeclaim} comes from, we can see that the standard deviation is reduced. And the quantile at 99.5\% is strongly reduced (in absolute value). One can observe that the case $\lambda_{0} = 0.6$ strongly reduces the expected net return in average. Without CAT bonds, the mean of the cash distribution is higher, mainly due to the costs associated with issuance and the premium related to the risk transfer to the market. On the other hand, the 99.5\% quantile is lower (indicating higher losses) when CAT bonds are not present, as they partially cover extreme risks.

\begin{figure}
\centering\includegraphics[scale = 0.48]{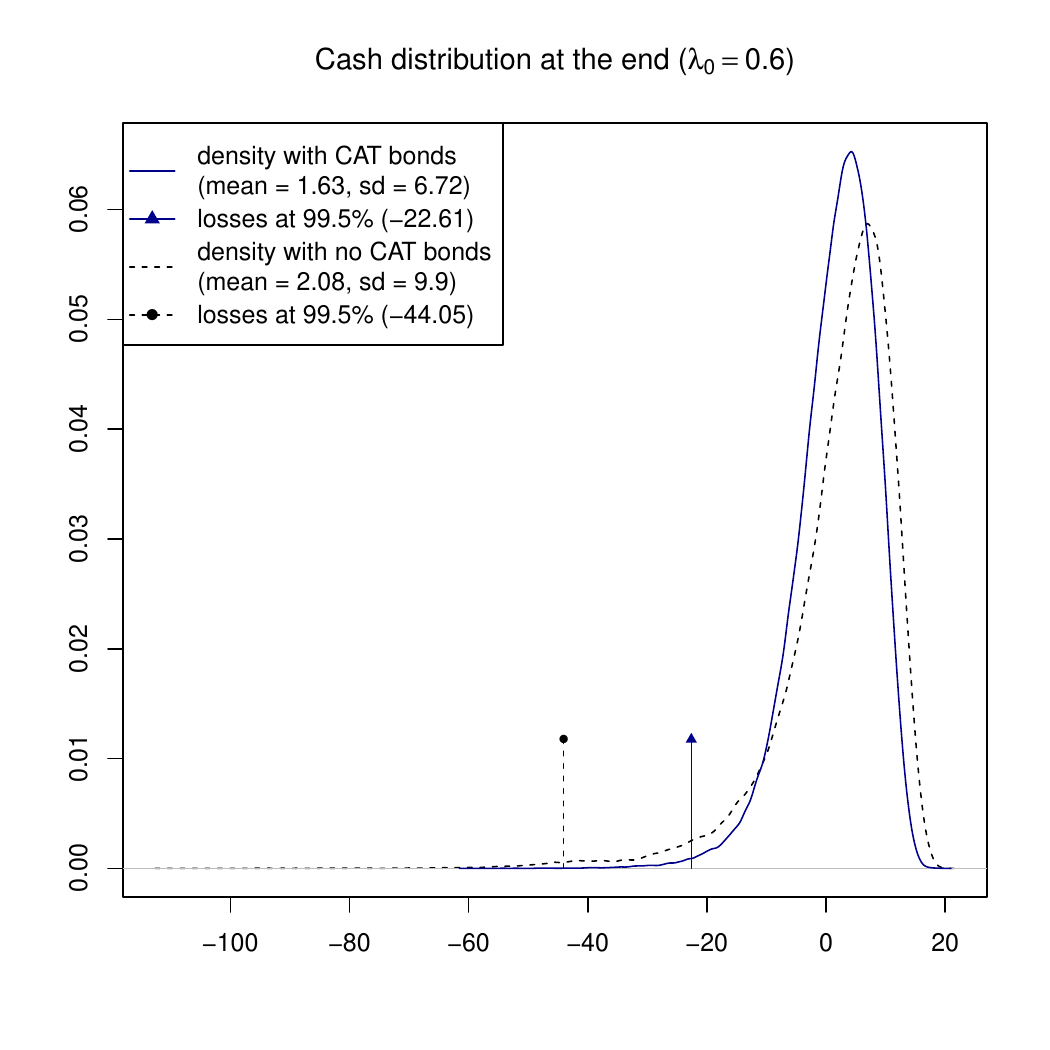}\includegraphics[scale = 0.48]{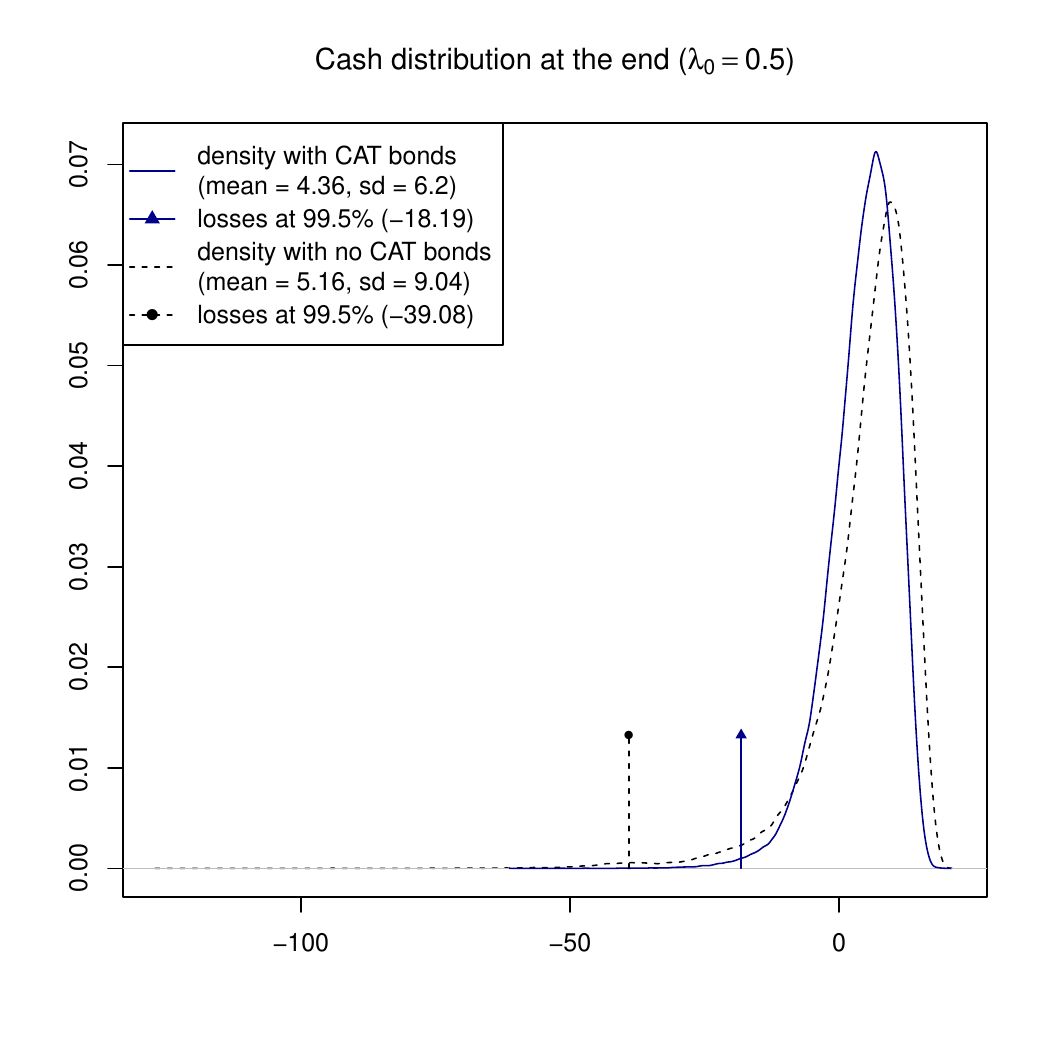}
\caption{Cash distribution (with 200 000 simulations) for $\lambda_{0} = 0.6$ (left) and $\lambda_{0} = 0.5$ (right) with the optimal control (solid dark blue) and without any CAT bond (dashed black).}\label{finalcash}
\end{figure}

\smallskip

We now look at the case with a discretization of 500 of the Pareto distribution. In particular, the maximum claim size is 21.4 billion which does not exceed the maximum layer $[9.0, 21.5]$. In Figure \ref{500normalclaim}, we show a simulated path. This time, the insurer chooses to get two CAT bonds at the layer $[50, 200]$. Actually, with this discretization, the layer $[200, 1000]$ appears to be less competitive since the discretization of 500 leads to a lower expected payoff. In the first years, the expected intensity is revised higher and the relative price of the layer $[10, 50]$ decreases (this layer requires the highest coupon since it is frequently hit). At the $4^{\text{th}}$ year, he changes his strategy and gets one CAT bond on the layer $[10, 50]$ and the other one on the layer $[50, 200]$. A catastrophe above the return period of 200 occurs at the $20^{\text{th}}$ year and both CAT bonds end. He prefers to wait the next season because of the consecutive price increase. Note that, in the previous cases (with Pareto distribution discretized in 2500 points), he was never without any CAT bond, even after an increase of $400\%$. Then, he continues his strategy to get a CAT bond on the layer $[10, 50]$ and the other one on the layer $[50, 200]$, until the end.

\begin{figure}
\centering\includegraphics[scale = 0.7]{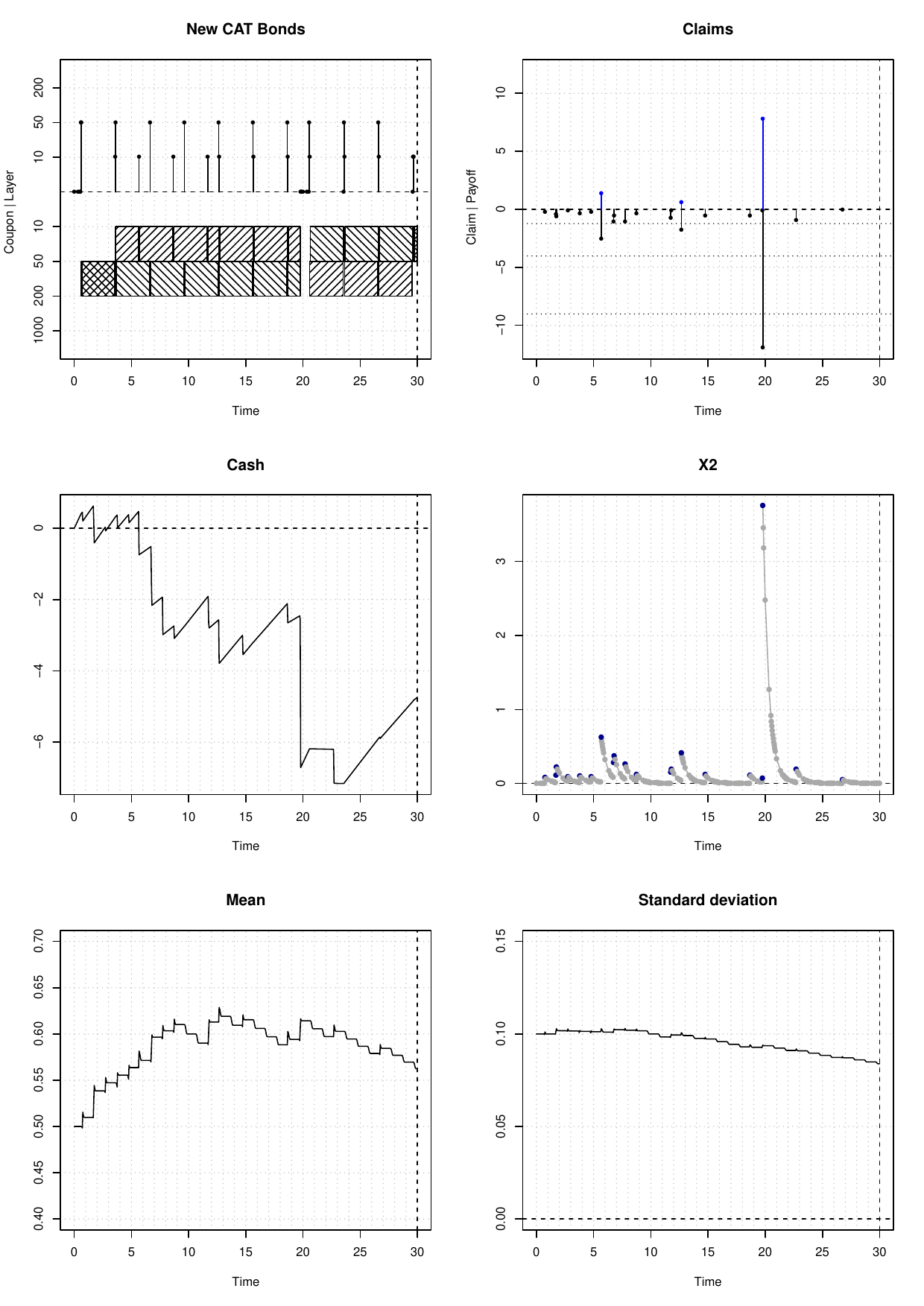}
\caption{Simulated path of the optimal strategy of the insurer.}\label{500normalclaim}
\end{figure}

\clearpage

\subsubsection{With the Bernoulli prior}

In Figure \ref{bernoulli_2500normalclaim}, we provide a simulated path of the optimal strategy in which the Pareto distribution is discretized in 2500 points (recall that the highest possible value is 49 billion dollars). As in the Gamma prior case, the insurer chooses to get two CAT bonds at the highest layer. When he experiences a huge claim during the second year, he still gets twice the layer but prefers to wait before to take a new CAT bond, according to the huge rise of the price. He waits the next year and restarts the same strategy until the $12^{th}$ year. Then, he issues CAT bonds on the layer $[50, 200]$ and $[200, 1000]$ until close to the end. 

The estimated probabilities on $\lambda_{0}$ evolve slowly at the beginning since $\lambda_{0}$ has an impact which rises over time (true value is $\lambda_0 = 0.4$).

\smallskip

\begin{figure}[h]
\centering\includegraphics[scale = 0.7]{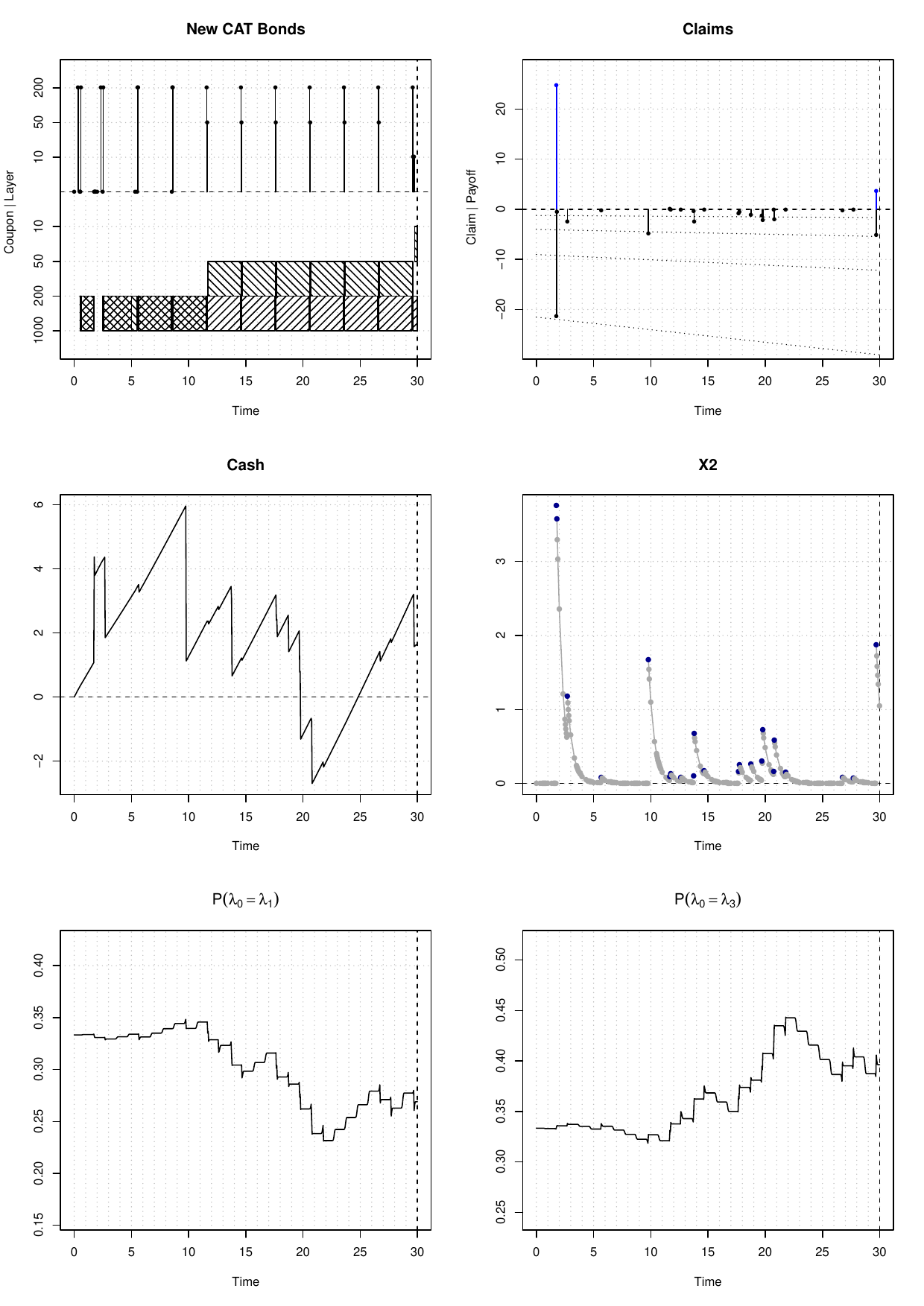}
\caption{Simulated path of the optimal strategy of the insurer (true value is $\lambda_0 = 0.4$, recall that $\lambda_1 = 0.2$, $\lambda_2 = 0.3$, and $\lambda_3 = 0.4$).}\label{bernoulli_2500normalclaim}
\end{figure}

In Figure \ref{bernoulli_finalcash}, we represent the approximated density (by kernel estimation) of the total cash of the insurer at the end of the 30 years. On the left, it is the case with $\lambda_{0} = 0.4$ (as it is also the case in Figure \ref{hugeclaim}) and on the right with $\lambda_{0} = 0.3$. The legend is the same as in Figure \ref{finalcash} and we get close distributions.

\begin{figure}[h]
\centering\includegraphics[scale = 0.48]{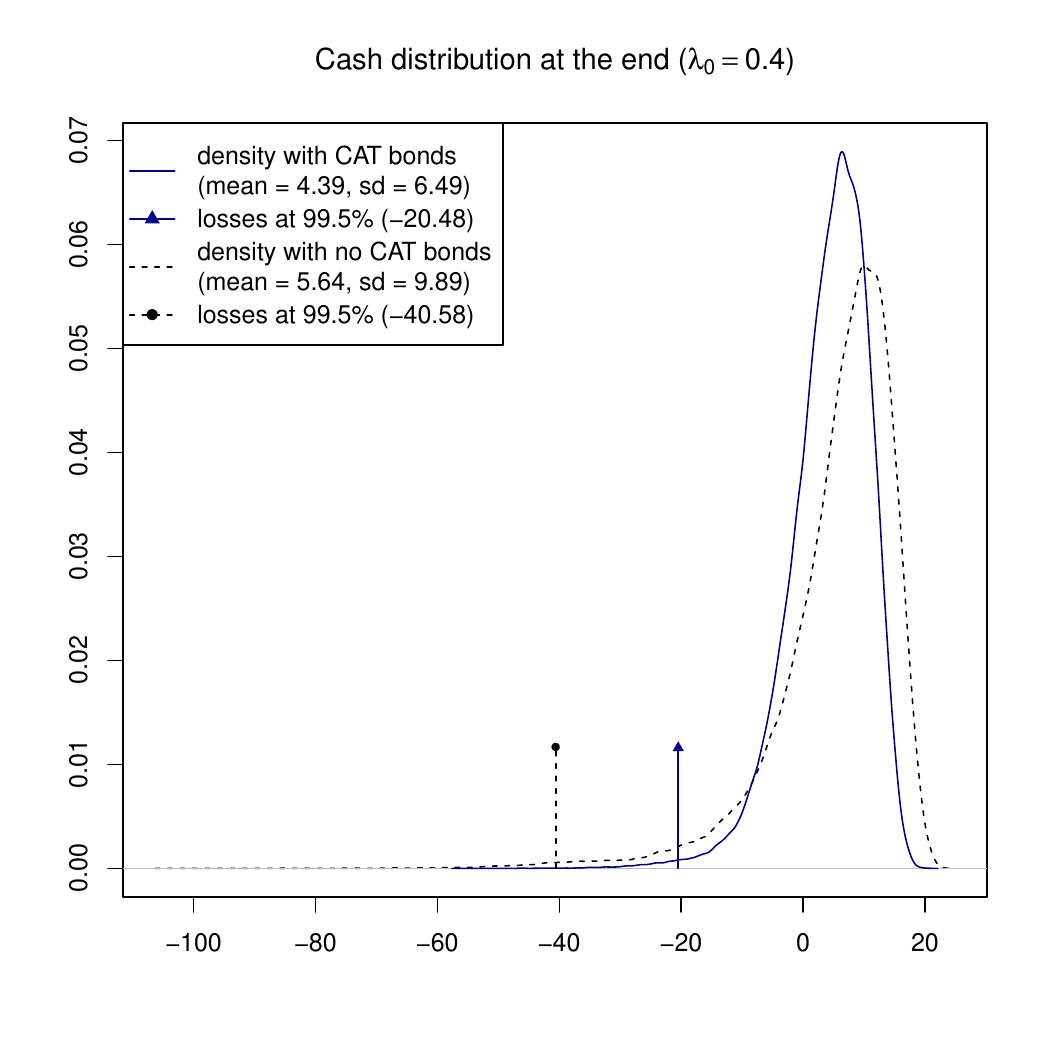}\includegraphics[scale = 0.48]{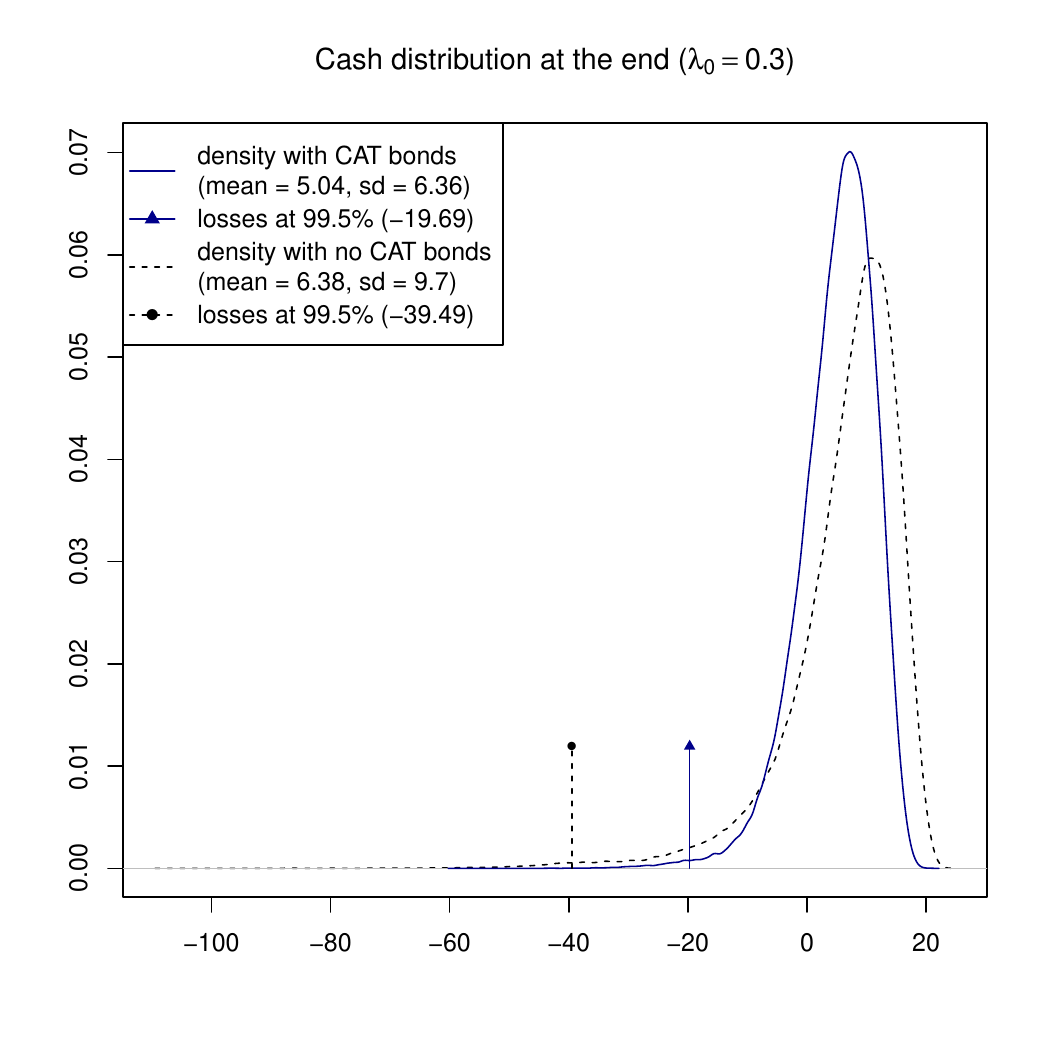}
\caption{Cash distribution (with 200 000 simulations) for the increase parameter $\lambda_{0} = 0.4$ (left) and $\lambda_{0} = 0.3$ (right) with the optimal control (solid dark blue) and without any CAT bonds (dashed black).}\label{bernoulli_finalcash}
\end{figure}

\smallskip

We now look at the case with a discretization of 500 of the Pareto distribution and show a simulated path in Figure \ref{bernoulli_500normalclaim}. As in the Gamma prior case, at the beginning, the insurer chooses to get two CAT bonds at the layer $[50, 200]$. He follows this strategy until he meets a huge claim in the $16^{th}$ year. He waits the next season and restarts the same strategy. At the $24^{th}$ year, he chooses to issue CAT bonds on two different layers, at $[50, 200]$ and $[10, 50]$. As in Figure \ref{500normalclaim}, this results in a change on the belief on the intensity.

\begin{figure}[h]
\centering\includegraphics[scale = 0.7]{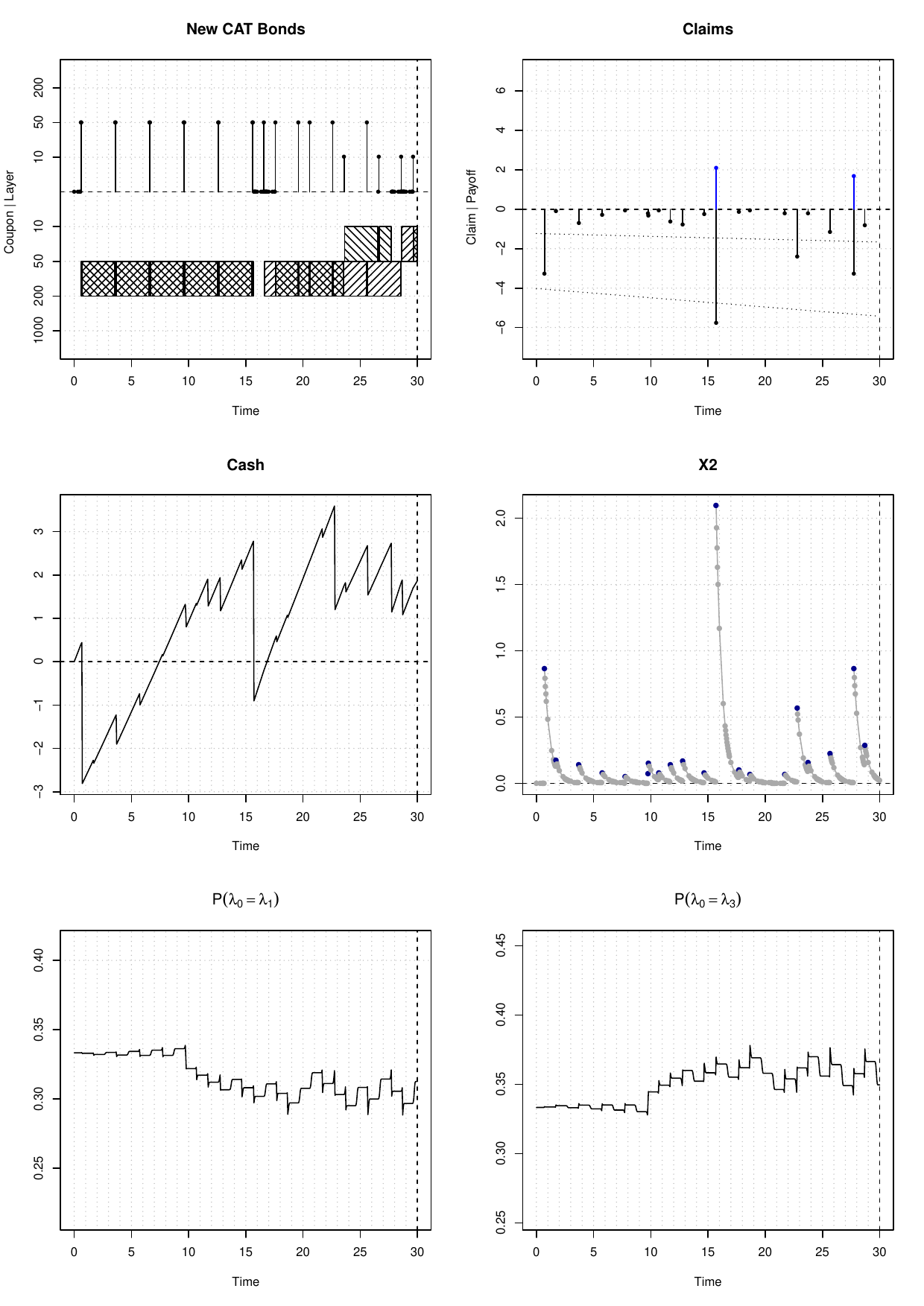}
\caption{Simulated path of the optimal strategy of the insurer.}\label{bernoulli_500normalclaim}
\end{figure}

\smallskip

Finally, in Figure \ref{finalprior}, we display the distribution of the probabilities on $\lambda_{0}$. This highlights the fact that it is very difficult to estimate it with observations through time.

\begin{figure}[h]
\centering\includegraphics[scale = 0.7]{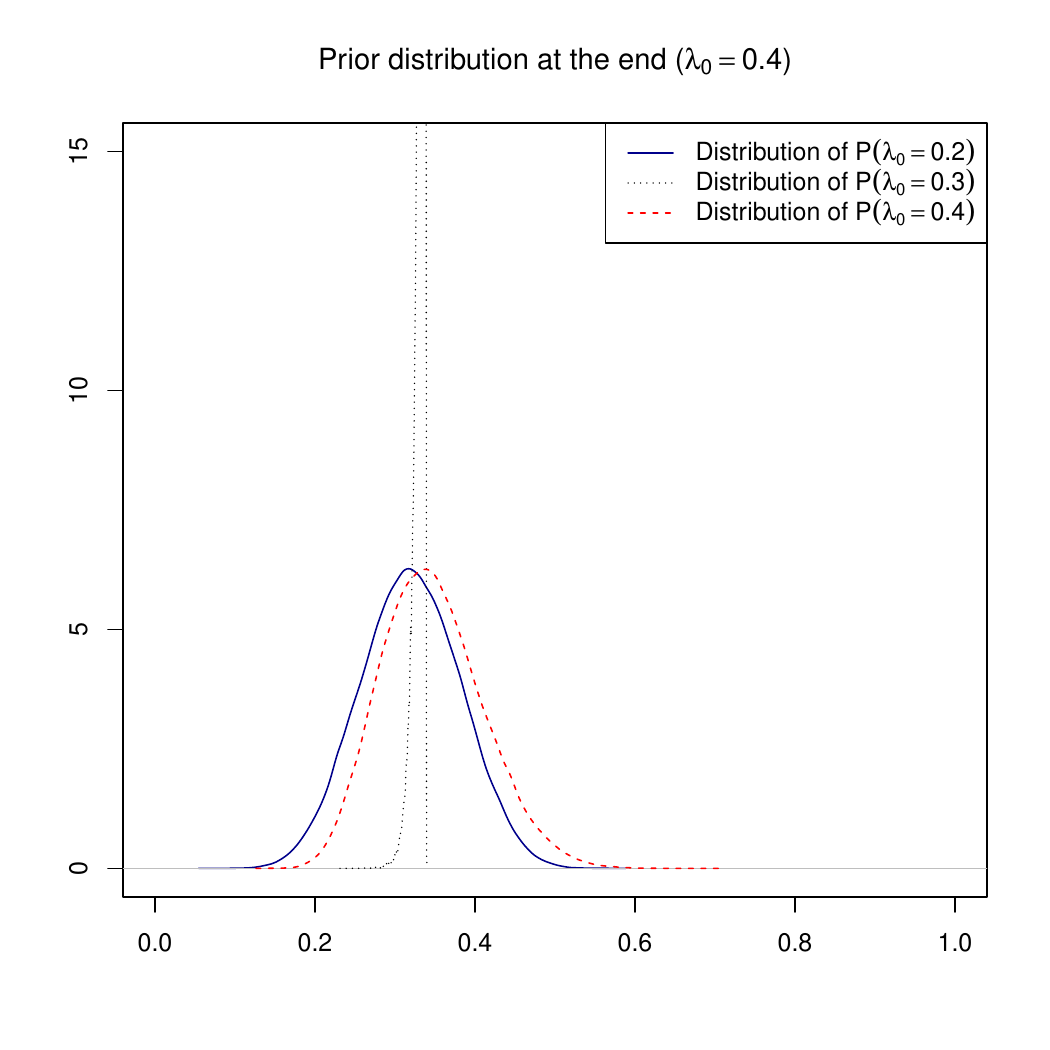}
\caption{Distribution of the probabilities on $\lambda_{0}$ at the end (with 200 000 simulations).}\label{finalprior}
\end{figure}

\clearpage

\subsection{Comparison between the Gamma case and the Bernoulli case}

Recall that in the Gamma case, the intensity is $\Lambda(\lambda, t) = \lambda h(t)$ where $h$ is 1-periodic, whereas the intensity of the Bernoulli case has the form $\Lambda(\lambda, t) = \frac{1}{2}h(t)\left[1 + \frac{E(t)}{T}\lambda\right]$ and takes into account an unknown factor due to the global warming.

Between the Figures \ref{hugeclaim} and \ref{bernoulli_2500normalclaim}, we can notice that the issues of both CAT bonds (top left figure) are delayed during one period, which is not the case in the Bernoulli case. This is due to the specific trajectories, \ref{hugeclaim} a claim occurred at $t = 5$ and hits the last layer. The two CAT bonds are resumed, but the price is high, the second one is issued later. During the whole period after, there are always two running CAT bonds, with separated renewal, while in the Bernoulli case, both are renewed later, at the beginning of the new hurricane season. The difference comes to the fact that in \ref{hugeclaim} the hurricane occurred before the end of the hurricane season, and despite the high price, the reinsurer issue a CAT bond immediately, whereas in \ref{bernoulli_2500normalclaim}, it is at the end of the season and he prefers to wait the next one, with a better price.

\medskip

The difference in the middle left figure (the Cash process) is simply due to the randomness of the trajectories, as in the middle right figure (the risk premium process, $X^2$).

\medskip

Finally, in \ref{hugeclaim}, the mean of the prior on $m$ is represented in the bottom left figure, since the true value is $\lambda_{0} = 0.6$ (which is also the true mean), it rises around it whereas the standard deviation decreases with the new information. In \ref{bernoulli_2500normalclaim}, the unknown factor is the growth of the claim rate. The bottom left and right graphics are now the probabilities of occurrence of $\{\lambda_0 = \lambda_1\}$ and $\{\lambda_0 = \lambda_3\}$ (the last one can be easily deduced). Since the true value is $\lambda_{0} = \lambda_{3}$, we see that it evolves in the correct direction. However, at the beginning it evolves very slowly, since the effect of the global warming (the growth rate) is difficult to estimate at the beginning, and easier after when it occurred. However, as we can see in \ref{finalprior}, in practice, it is very difficult to estimate to growth factor.

\section{Benefits and limits}

The framework given is quite general. In addition to CAT bonds, it could handle reinsurance treaties and to choice between them. It also tackles some lack of information about some parameters or their evolution, and the framework is adaptive. Corresponding to the expected prices on the CAT bond market for each layer, it could help in a decision process by telling which CAT bonds to issue and at which notional.

\medskip

Nonetheless, the curse of dimensionality is important, especially in the number of CAT bonds and their admissible characteristics. Adding the ability to hold one more CAT bond ask to up the dimension by its characteristics and the time-length elapsed. The uncertainty on one parameter also requires at least one dimension for the prior evolution. The most heavy example provided, from a computing time point of view, is the Bernoulli case with the Pareto Distribution discretized by 2500. On a Ryzen 7 1800X (8 cores, 16 threads, at 3.6 Ghz), with a program written in C++ completely multi-threaded, it takes around 120 hours (5 days) in order to compute the optimal control of this example. In practice, it appears complicated to use $\kappa \geq 3$ without restriction, and one should consider each peril / region separately.

\smallskip

 \bibliographystyle{plain}


\end{document}